\DeclareMathOperator{\lct}{lct}
\DeclareMathOperator{\Pic}{Pic}
\DeclareMathOperator{\Proj}{Proj}
\DeclareMathOperator{\Supp}{Supp}
\DeclareMathOperator{\vol}{vol}
\DeclareMathOperator{\Bir}{Bir}
 \numberwithin{equation}{subsection}
 \numberwithin{footnote}{subsection}
 \newtheorem{cor}[subsection]{Corollary}
 \newtheorem{lem}[subsection]{Lemma}
 \newtheorem{prop}[subsection]{Proposition}
 \newtheorem{thm}[subsection]{Theorem}
\theoremstyle{upright}
 \newtheorem{exa}[subsection]{Example}
 \newcommand{\N}{\mathbb N}
 \newcommand{\PP}{\mathbb P}
 \newcommand{\Q}{\mathbb Q}
 \newcommand{\R}{\mathbb R}
  \newcommand{\C}{\mathbb C}
 \newcommand{\bir}{\dashrightarrow}
 \newcommand{\rddown}[1]{\left\lfloor{#1}\right\rfloor} % round-down
\title{\large S\MakeLowercase{ingularities of linear systems and boundedness of }F\MakeLowercase{ano varieties}}
\thanks{2010 MSC:
14J45, % Fano varieties
14E30, %Minimal model program (Mori theory, extremal rays)
14C20.\\ % Linear systems, divisors, etc
%14E05. %Rational and birational maps
Keywords: Fano varieties, bounded families, linear systems, log canonical thresholds, minimal model program.}
\author{\large C\MakeLowercase{aucher} B\MakeLowercase{irkar}}
\date{\today}
\begin{document}
\maketitle

\begin{abstract}
We study log canonical thresholds (also called global log canonical threshold or $\alpha$-invariant) of $\R$-linear 
systems. We prove existence of positive lower bounds in different settings, in particular, proving a conjecture of Ambro.    
We then show that the Borisov-Alexeev-Borisov conjecture holds, that is, 
given a natural number $d$ and a positive real number $\epsilon$, the set of Fano varieties of dimension $d$  
with $\epsilon$-log canonical singularities forms a bounded family. This implies that birational automorphism groups of rationally 
connected varieties are Jordan which in particular answers a question of Serre. Next we show that if the log canonical 
threshold of the anti-canonical system of a Fano variety is at most one, then it is 
computed by some divisor, answering a question of Tian in this case. 
\end{abstract}

\tableofcontents

%%%%%%%%%%%%%%%%%%%%%%%%
%%%%%%%%%%%%%%%%%%%%%%%%%%

\section{\bf Introduction}

We work over an algebraically closed field of characteristic zero unless stated otherwise.\\

{\textbf{\sffamily{Boundedness of singular Fano varieties.}}}
 A normal projective variety $X$ is \emph{Fano} if $-K_X$ is ample and if $X$ has log canonical singularities.  
Fano varieties are among the most extensively studied varieties because of their rich geometry.
They are of great importance from the point of view of 
birational geometry, differential geometry, arithmetic geometry, derived categories, mirror symmetry, etc.

Given a smooth projective variety $W$ with $K_W$ not pseudo-effective, the minimal model program 
produces a birational model $Y$ of $W$ together with a Mori fibre space structure $Y\to Z$ [\ref{BCHM}]. A  
general fibre of $Y\to Z$ is a Fano variety $X$ with terminal singularities. Thus it is no surprise 
that Fano varieties constitute a fundamental class in birational geometry. It is important to understand 
them individually but also collectively in families for various reasons such as construction of 
moduli spaces.

In dimension one, there is only one Fano variety up to isomorphism which is $\PP^1$. In dimension two, there are many, 
in fact, infinitely many families. To get a better picture one needs to impose a bound on the singularities. For example, 
it is a classical result that the smooth Fano surfaces form a bounded family. 
More generally, the Fano surfaces with $\epsilon$-log canonical ($\epsilon$-lc) 
singularities form a bounded family [\ref{Alexeev}], 
for any fixed $\epsilon>0$ (see \ref{ss-pairs} for definition of singularities). The smaller is $\epsilon$ the larger is the family.   

In any given dimension, there is a bounded family of smooth Fano varieties [\ref{KMM-smooth-fano}] 
(also see [\ref{Nadel}][\ref{Campana}]). 
This is proved using geometry of rational curves. Unfortunately, this method does not work 
when one allows singularities. On the other hand, toric Fano varieties 
of given dimension with $\epsilon$-lc singularities also form a bounded family [\ref{A-L-Borisov}], for fixed $\epsilon>0$.
In this case, the method of proof is based on combinatorics. 

The results mentioned above led Alexeev [\ref{Alexeev}] and the Borisov brothers [\ref{A-L-Borisov}]
to conjecture that, in any given dimension, Fano varieties with $\epsilon$-lc singularities form a bounded family,
for fixed $\epsilon>0$. A generalised form of this statement, which is known in the literature 
as the Borisov-Alexeev-Borisov or the BAB conjecture, is our first result. 

\begin{thm}\label{t-BAB}
Let $d$ be a natural number and $\epsilon$ be a positive real number. Then the projective 
varieties $X$ such that  
\begin{itemize}

\item $(X,B)$ is $\epsilon$-lc of dimension $d$ for some boundary $B$, and 

\item  $-(K_X+B)$ is nef and big,\\
\end{itemize}
form a bounded family. 
\end{thm}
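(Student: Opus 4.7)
The strategy is to induct on dimension $d$, assuming Theorem \ref{t-BAB} in all dimensions $<d$. By standard reductions via the MMP (replacing $(X,B)$ by a model on which $-(K_X+B)$ is ample), boundedness of the family reduces to two quantitative statements depending only on $(d,\epsilon)$: a uniform upper bound on the anticanonical volume $\vol(-(K_X+B))$, and birationality of the map defined by $|{-m(K_X+B)}|$ for some bounded positive integer $m$. Once both are in hand, a Hacon--McKernan--Xu-type argument bounds $X$ itself, since on an $\epsilon$-lc variety a bounded-degree birational anticanonical map forces the underlying variety into a bounded family.

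\textbf{Key tools.} I would exploit two inputs established earlier in the paper, combined. First, the positive lower bound for the log canonical threshold of $\R$-linear systems (Ambro's conjecture) lets one perturb members of $|{-n(K_X+B)}|_\R$ without destroying $\epsilon'$-lc singularities for some uniform $\epsilon'=\epsilon'(d,\epsilon)>0$. Second, I need a theory of bounded $n$-complements: there should exist $n=n(d,\epsilon)$ and an effective $\Lambda\ge B$ with $n(K_X+\Lambda)\sim 0$ and $(X,\Lambda)$ log canonical. Applied in tandem, these two provide an ample supply of $\Q$-divisors $D\sim_\Q -(K_X+B)$ whose singularities can be prescribed at a chosen point while retaining quantitative control of the singularities of the ambient pair.

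\textbf{Volume bound, birationality, and the main obstacle.} To bound $\vol(-(K_X+B))$, suppose a sequence violates it; using tie-breaking with divisors $D$ produced above, create a non-klt centre through a general point, take a minimal such centre $V$, and apply adjunction to $(X,\Lambda+tD)$ onto $V$. The resulting pair on $V$ lies in dimension $<d$ with coefficients in a fixed DCC set and singularities tame enough to invoke the inductive form of Theorem \ref{t-BAB}, which then caps the invariants controlling $\vol(-(K_X+B))$ and contradicts the assumed unboundedness. Birationality of $|{-m(K_X+B)}|$ for bounded $m$ follows by the same mechanism applied to pairs of general points: create isolated non-klt centres at both, then lift local sections via Kawamata--Viehweg vanishing. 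The central obstacle is the simultaneous preservation throughout the induction of (a) the DCC property of the coefficients produced by tie-breaking and complements, (b) enough residual control on singularities after adjunction that the inductive hypothesis actually applies, and (c) a uniform lct lower bound surviving the descent to $V$. None of these is difficult in isolation, but making them compatible across the inductive step — essentially, closing the loop so that Ambro's conjecture, bounded complements, and BAB reinforce one another in lower dimensions — is where the real work lies.
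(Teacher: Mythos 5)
Your plan has the right ingredients and the right inductive structure, so it is broadly aligned with the paper, but the organization differs in ways worth pointing out. The paper does not work directly with the pair $(X,B)$: it first runs an MMP on $-K_X$ (not on $-(K_X+B)$) to pass to an $\epsilon$-lc \emph{weak Fano} variety $X'$ with $B=0$, proves boundedness for such $X'$, and then returns to $X$ by producing a klt $n$-complement on $X'$, pulling it back, and invoking Hacon--Xu. This reduction to $B=0$ quietly disposes of your worry (a) about tracking DCC coefficients through the induction — there simply is no boundary to track. A second structural difference: you propose to re-derive the volume bound, effective birationality, and bounded complements via tie-breaking, adjunction, and Kawamata--Viehweg lifting. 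The paper instead cites these wholesale from the companion paper [\ref{B-compl}] (Theorems \ref{t-eff-bir-e-lc}, \ref{t-BAB-to-bnd-vol}, \ref{t-bnd-compl}), with the volume bound assuming BAB in dimension $d-1$. What you are sketching is essentially the content of [\ref{B-compl}], not of this paper.

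The place where your sketch is genuinely imprecise is the step from ``birationally bounded'' to ``bounded.'' Volume bound plus a bounded-degree birational anticanonical map gives only birational boundedness; it does not ``force the underlying variety into a bounded family,'' and in fact this fails badly without a further input. The missing input is exactly Theorem \ref{t-bnd-lct-global}: the uniform positive lower bound on $\lct(X,0,|{-K_X}|_\R)$ guarantees that for every $l$ and every $L\in|{-lK_X}|$ the pair $(X,t_lL)$ is klt for some fixed sequence $t_l$, and this is the hypothesis that Proposition 7.13 of [\ref{B-compl}] (quoted as Theorem \ref{t-from-lct-to-bnd-var}) needs in order to upgrade birational boundedness to honest boundedness. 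You list the lct lower bound as a tool for ``perturbing members'' of the linear system, but its true role here is as the hypothesis controlling how singular divisors in $|{-lK_X}|$ can be, which is what prevents the birational models from degenerating. So the proof of Theorem \ref{t-BAB} in the paper is short precisely because the genuine new work is isolated in Theorem \ref{t-bnd-lct-global} (proved via Theorems \ref{t-bnd-lct} and \ref{t-compl-near-lcc}), while everything else is imported from [\ref{B-compl}]. Your ``central obstacle'' paragraph correctly identifies that the inductive loop between the lct bound, complements, and BAB is where the real content lies; what needs sharpening is that the loop is closed at the lct theorem, not at the volume or birationality statements.
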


The theorem would not hold if one takes $\epsilon=0$ (see below): it already fails in dimension two, 
and in dimension three it fails even if we replace bounded by birationally bounded [\ref{Lin}].

In addition to the results mentioned earlier, there are numerous other partial cases of the 
theorem in the literature (also see Nikulin [\ref{Nikulin}][\ref{Nikulin-2}][\ref{Nikulin-3}] for related results 
in dimension two). Indeed boundedness was known for: 
\begin{itemize}
\item Fano $3$-folds with terminal singularities and Picard number one [\ref{kawamata-term-3-folds}],

\item Fano $3$-folds with canonical singularities  [\ref{KMMT-can-3-folds}], 

\item spherical Fano varieties [\ref{Alexeev-Brion}], 

\item  Fano $3$-folds with klt singularities and fixed Cartier index of $K_X$ [\ref{A-Borisov}], 

\item Fano varieties of given dimension with klt singularities and fixed Cartier index of $K_X$ [\ref{HMX2}];

\item Fano varieties $X$ of given dimension equipped with a boundary $\Delta$ such that 
$K_X+\Delta\equiv 0$, $(X,\Delta)$ is $\epsilon$-lc for fixed $\epsilon>0$, and such that the coefficients of 
$\Delta$ belong to a DCC set [\ref{HMX2}], or more generally when the coefficients of 
$\Delta$ are bounded from below away from zero [\ref{B-compl}].\\
\end{itemize}

We give two examples of unbounded families of klt Fano varieties of dimension two. 
\begin{exa}
\emph{
For each natural number $n\ge 2$, let $X_n$ be the projective cone over the rational curve of deg $n$ in $\PP^n$. 
Let $f\colon W_n\to X_n$ be given by the blowup of the vertex, and let $E$ be the exceptional curve. 
Using adjunction it is easy to show that 
$$
K_{W_n}+\frac{n-2}{n}E=f^*K_{X_n}.
$$
Thus $X_n$ is a $\frac{2}{n}$-lc Fano variety. As $n\to \infty$, the singularities of $X_n$ get worse.
Now $\{X_n \mid n\in \mathbb{N}, n\ge 2\}$ is not a bounded family otherwise the Cartier index of $K_{X_n}$ 
would have been bounded which in turn would imply that the coefficient of $E$ in the 
above formula belongs to a fixed finite set which is clearly not the case.}
\end{exa}

\begin{exa}
\emph{
Consider the pair
$$
(\mathbb{P}^2,\Delta=S+T+R)
$$ 
where $S,T,R$ are the coordinate lines.  
Let $V_1\to \mathbb{P}^2$ be the blowup of $x_0:=S\cap T$.
Let $V_2\to V_1$ be the blowup of $x_1:=S^\sim \cap E_1$ where $E_1$ is the exceptional divisor of 
$V_1\to \mathbb{P}^2$ and $S^\sim$ is the birational transform of $S$.
Similarly define $V_n\to V_{n-1}$ where in each step we blowup the intersection point of 
$S^\sim$ with the newest exceptional curve. The $V_n$ are all toric varieties, in particular, $-K_{V_n}$ is big.
Run an MMP on $-K_{V_n}$ and let $X_n'$ be the resulting model. Then 
$X_n'$ is a klt toric weak Fano variety. Let $X_n'\to X_n$ be the contraction defined by $-K_{X_n'}$. 
Then $X_n$ is a klt toric Fano variety. Now 
$\{X_n \mid n\in \mathbb{N}\}$ is not a bounded family 
because $\{V_n \mid n\in \mathbb{N}\}$ is not a bounded family: indeed if the $X_n$ form a bounded family, 
then each $K_{X_n}$ has a klt $m$-complement for some $m$ independent of $n$; but then 
each $K_{V_n}$ would also have a klt $m$-complement which implies the $V_n$ form a bounded family 
by [\ref{HX}]; this is a contradiction because the Picard number of $V_n$ is clearly not bounded.}\\
\end{exa}

It is easy to see that Theorem \ref{t-BAB} is equivalent to the following statement.

\begin{cor}\label{cor-BAB}
Let $d$ be a natural number and $\epsilon$ be a positive real number. Then the projective 
varieties $X$ such that  
\begin{itemize}

\item $(X,\Delta)$ is $\epsilon$-lc of dimension $d$ for some boundary $\Delta$, 

\item $K_X+\Delta\sim_\R 0$ and $\Delta$ is big,\\
\end{itemize}
form a bounded family. 
\end{cor}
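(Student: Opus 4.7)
The plan is to derive Corollary~\ref{cor-BAB} from Theorem~\ref{t-BAB} by producing, for each pair $(X,\Delta)$ as in the corollary, a boundary $B$ on $X$ such that $(X,B)$ is $\epsilon'$-lc for some $\epsilon'>0$ depending only on $\epsilon$ and $d$, and such that $-(K_X+B)$ is nef and big. Theorem~\ref{t-BAB} applied with parameters $\epsilon',d$ to $(X,B)$ then concludes.

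The construction of $B$ exploits the bigness of $\Delta$. By Kodaira's lemma we may write $\Delta\sim_\R A+E$ with $A$ an ample $\R$-divisor and $E$ an effective $\R$-divisor. For $s\in(0,1)$ set $B_s := (1-s)\Delta+sE$. This is effective, and using $K_X+\Delta\sim_\R 0$,
\[
K_X+B_s \;=\; K_X+\Delta - s(\Delta-E) \;\sim_\R\; -sA,
\]
so $-(K_X+B_s)\sim_\R sA$ is ample---in particular nef and big---for every $s>0$, giving the numerical half of Theorem~\ref{t-BAB}'s hypothesis automatically.

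For the singularities, linearity of log discrepancies gives
\[
a(D,X,B_s) \;=\; (1-s)\,a(D,X,\Delta) + s\,a(D,X,E) \;\ge\; (1-s)\epsilon + s\,a(D,X,E)
\]
for every divisor $D$ over $X$, so $(X,B_s)$ will be $\epsilon/2$-lc provided $s$ is small and the numbers $a(D,X,E)$ are bounded below uniformly in $D$ and in the family. The concrete scheme is to take $A=\delta H$ for a fixed very ample $H$ on $X$ and $\delta>0$ small, and then to choose $E$ as a sufficiently general effective $\R$-divisor in the class $\Delta-A$ (which is big for small $\delta$). A uniform positive lower bound on the log canonical threshold of such a general $E$---equivalent to a uniform lower bound on the $a(D,X,E)$---is precisely the type of statement furnished by the paper's earlier results on lct's of $\R$-linear systems on $\epsilon$-lc pairs. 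With $A$, $E$, and $s$ chosen small enough (all depending only on $\epsilon,d$), $(X,B_s)$ is $\epsilon/2$-lc and Theorem~\ref{t-BAB} applies.

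The main obstacle is exactly this uniform control of singularities. For any individual $(X,\Delta)$ one can easily cook up \emph{some} lc perturbation $B_s$, but without lct lower bounds depending only on $\epsilon$ and $d$ the parameter $\epsilon'$ could degenerate to $0$ as $(X,\Delta)$ ranges over the family of the corollary, and a single application of Theorem~\ref{t-BAB} would not cover the whole family. Thus the reduction is essentially formal modulo the paper's global lct lower bounds, which do the real work.
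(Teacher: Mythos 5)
Your perturbation is the same as the paper's (write $\Delta\sim_\R A+E$ with $A$ ample and $E\ge 0$, set $B_s=(1-s)\Delta+sE$, so $-(K_X+B_s)\sim_\R sA$ is ample), but the second half of your argument has a genuine gap coming from a misdiagnosis of what must be uniform. Theorem \ref{t-BAB} only asks that $-(K_X+B)$ be nef and big, with no quantitative positivity, so the parameter $s$ is allowed to depend on the individual pair $(X,\Delta)$; the only thing that must be uniform is the lc parameter, and $\frac{\epsilon}{2}$ works for every pair once $s$ is small enough for that pair. Concretely, both $K_X+E=(K_X+\Delta)-(\Delta-E)$ and $K_X+B_s=(K_X+\Delta)-s(\Delta-E)$ are $\R$-Cartier because $\Delta-E\sim_\R A$; fix a log resolution on which the pullbacks of $K_X+\Delta$ and $K_X+E$ have simple normal crossing support. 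By Lemma \ref{l-average-boundary} the coefficients of the pullback of $K_X+B_s$ converge, as $s\to 0$, to those of the pullback of $K_X+\Delta$, which are at most $1-\epsilon$; so for $s$ small, depending on the pair, they are at most $1-\frac{\epsilon}{2}$, and by log smoothness this already gives that $(X,B_s)$ is $\frac{\epsilon}{2}$-lc. This per-pair openness argument is exactly the paper's one-line proof: choose $\alpha\in(0,1)$, depending on $(X,\Delta,E)$, with $(X,(1-\alpha)\Delta+\alpha E)$ $\frac{\epsilon}{2}$-lc, note its anti-log-canonical class is $\R$-linearly equivalent to $\alpha A$, hence ample, and apply Theorem \ref{t-BAB} with $\frac{\epsilon}{2}$. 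No global lct bound enters the deduction.

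By contrast, the uniform statement your proof rests on is neither furnished by the results you cite nor achievable as formulated, so as written the key step is asserted rather than proved. Theorem \ref{t-bnd-lct} requires a very ample divisor $A$ with $A^d\le r$, i.e. a polarisation of bounded degree, which presupposes the boundedness being proved (your ``fixed very ample $H$ on $X$'' has no uniform degree bound a priori); Theorem \ref{t-bnd-lct-global} requires $-(K_X+B)$ nef \emph{and big} for the reference pair, whereas here $-(K_X+\Delta)\sim_\R 0$ is not big. Moreover your sufficient condition, that $a(D,X,E)$ be bounded below uniformly in $D$, already forces $(X,E)$ to be lc, since for a fixed pair the infimum of log discrepancies over all divisors is either $\ge 0$ or $-\infty$; and a general member of $|\Delta-\delta H|_\R$ has no reason to be lc (the class can, for instance, have a fixed part of multiplicity greater than $1$). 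So the proposal both relies on an unproved and unavailable uniform lct bound and mis-identifies what needs to be checked; replacing that step with the per-pair argument above closes the gap and recovers the paper's proof.
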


 The corollary was previously known when the coefficients of 
$\Delta$ are in a fixed DCC set [\ref{HX}] or when the coefficients are bounded from below 
away from zero [\ref{B-compl}].

The pairs $(X,\Delta)$ in the corollary are \emph{log Calabi-Yau pairs} with big boundary. 
Viewed in this context one is immediately led to the question: what other classes of log Calabi-Yau 
pairs form bounded families? In fact one can ask a host of hard fundamental questions regarding 
boundedness, singularities, complements and linear systems, 
in the more general context of  log Calabi-Yau fibrations. 

For example, 
a conjecture of M$^{\rm c}$Kernan and Prokhorov [\ref{MP}] predicts that the set of projective rationally connected 
varieties $X$ such that $(X,\Delta)$ is log Calabi-Yau of given dimension with $\epsilon$-lc 
singularities for fixed $\epsilon>0$ forms a bounded family. Without the rational connectedness assumption 
the conjecture does not hold as, for example, all smooth K3 surfaces do not form a bounded family. The conjecture is a more 
general form of \ref{cor-BAB} since the $X$ in the corollary are of  Fano type hence automatically 
rationally connected. Here by $X$ being of Fano type we mean there is a boundary $B$ such that $(X,B)$ is 
klt and $-(K_X+B)$ is nef and big.

For a systematic treatment of generalised log Calabi-Yau fibrations, see [\ref{B-lcy-fibs}].\\

{\textbf{\sffamily{Jordan property of Cremona groups.}}}
Prokhorov and Shramov [\ref{Prokhorov-Shramov}] studied the birational automorphism group of 
algebraic varieties using techniques of birational geometry. They investigated the question whether 
such groups are \emph{Jordan}. They in particular showed that the BAB conjecture, that is 
Theorem \ref{t-BAB}, implies the Jordan property for rationally connected varieties.

A group $C$ is Jordan of index $h$ if  
for any finite subgroup $G$ of $C$ there is a normal abelian subgroup $H$ of $G$ of index at most $h$. 
When we say $C$ is Jordan we mean that it is Jordan of index $h$ for some $h$.

\begin{cor}\label{cor-bir-aut}
Let $d$ be a natural number. Then there is a natural number $h$ depending only on $d$ 
satisfying the following. Let $k$ be a field of characteristic zero (not necessarily algebraically closed)  
and $X$ be a rationally connected variety of dimension $d$ over $k$. Then  
 the birational automorphism group $\Bir(X)$ is Jordan of index $h$.
\end{cor}

The corollary follows immediately from Theorem \ref{t-BAB} and [\ref{Prokhorov-Shramov}, Theorem 1.8]. 
If we take $X=\PP^d_k$ in the corollary, then we deduce that the Cremona group 
${\rm Cr}_d(k):=\Bir(\PP^d_k)$ is Jordan, answering a question of Serre [\ref{Serre}, 6.1] which was 
the main motivation for the work [\ref{Prokhorov-Shramov}]. 

Note that $\Bir(X)$ is not Jordan for arbitrary algebraic varieties $X$. Indeed 
when $X$ is the product of $\PP^1_k$ and an abelian variety, then $\Bir(X)$ is not Jordan [\ref{Zarhin}]. 
However, if $X$ is a non-uniruled variety, then $\Bir(X)$ is always Jordan [\ref{Prokhorov-Shramov-2}].\\

{\textbf{\sffamily{Lc thresholds of $\R$-linear systems.}}}
Let $(X,B)$ be an lc pair. The \emph{lc threshold} of an $\R$-Cartier $\R$-divisor $L\ge 0$ with respect to $(X,B)$ 
is defined as 
$$
\lct(X,B,L):=\sup\{t\in \R \mid (X,B+tL) ~~\mbox{is lc}\}.
$$ 
Now let $A$ be an $\R$-Cartier $\R$-divisor. The $\R$-linear system of $A$ is 
$$
|A|_\R=\{L\ge 0 \mid L\sim_\R A\}.
$$
We then define the \emph{lc threshold} of $|A|_\R$ with respect to $(X,B)$ (also called global 
lc threshold or $\alpha$-invariant)  as 
$$
\lct(X,B,|A|_\R):=\inf\{\lct(X,B,L) \mid L\in |A|_\R\}
$$
which coincides with 
$$
\sup\{t\in \R \mid (X,B+tL) ~~\mbox{is lc for every} ~~L\in |A|_\R\}.
$$
One can similarly define the lc threshold of $|A|$ and $|A|_\Q$ but we will not need them.

Due to connections 
with the notion of stability and existence of K\"ahler-Einstein metrics, lc thresholds of $\R$-linear systems  
have attracted a lot of attention particularly 
when $A$ is ample. An important special case is when $X$ is Fano and $A=-K_X$.\\

{\textbf{\sffamily{Lc thresholds of anti-log canonical systems of Fano pairs.}}}
We were led to lc thresholds of Fano varieties for a quite different reason. 
The paper [\ref{B-compl}] reduces Theorem \ref{t-BAB} to existence of a positive lower bound 
for lc thresholds of anti-canonical systems of certain Fano varieties which is guaranteed by our next result.

\begin{thm}\label{t-bnd-lct-global}
Let $d$ be a natural number and $\epsilon$ be a positive real number. Then there is a 
positive real number $t$ depending only on $d,\epsilon$ satisfying the following. 
Assume 
\begin{itemize}

\item $(X,B)$ is a projective $\epsilon$-lc pair of dimension $d$, and

\item $A:=-(K_X+B)$ is nef and big.\\
\end{itemize}
Then 
$$
\lct(X,B,|A|_\R)\ge t.
$$
\end{thm}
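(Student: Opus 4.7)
The plan is to argue by contradiction and induction on $d$, the base case $d=1$ being immediate ($X \cong \PP^1$). Suppose the statement fails: there is a sequence of pairs $(X_i, B_i)$ as in the hypotheses with $A_i := -(K_{X_i}+B_i)$ nef and big, together with $L_i \in |A_i|_\R$ and $u_i \to 0$ such that $(X_i, B_i + u_i L_i)$ is lc but not klt. After tie-breaking and passing to a dlt modification $\phi_i \colon Y_i \to X_i$, I may assume the non-klt locus is cut out by a single prime divisor $T_i$ on $Y_i$. Since $(X_i, B_i)$ is $\epsilon$-lc, we have $a(T_i, X_i, B_i) \ge \epsilon$, while $a(T_i, X_i, B_i + u_i L_i) = 0$; therefore $\mult_{T_i} \phi_i^* L_i \ge \epsilon/u_i \to \infty$.

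The task is then to convert this exploding multiplicity into a contradiction. I would combine two ingredients. First, the theory of bounded complements from [\ref{B-compl}]: for $(X_i, B_i)$ as above there is a uniform $n = n(d, \epsilon)$ and an effective $\Omega_i \sim -n(K_{X_i}+B_i)$ with $(X_i, B_i + \tfrac{1}{n}\Omega_i)$ lc and $K_{X_i} + B_i + \tfrac{1}{n}\Omega_i \sim_\Q 0$. This supplies a uniformly controlled rational element of $|A_i|_\Q$ playing the role of a reference section, constraining the behaviour of the $\R$-linear system. Second, adjunction along $T_i$: writing $K_{Y_i} + T_i + \Gamma_i = \phi_i^*(K_{X_i} + B_i)$ with $\Gamma_i \ge 0$, and using the canonical bundle formula (or divisorial adjunction) on $T_i$, gives a pair $(T_i, B_{T_i})$ of dimension $d-1$ carrying a restricted linear system inherited from $|A_i|_\R$. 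If this lower-dimensional pair falls within the inductive framework, the divergent multiplicity $\mult_{T_i} L_i \to \infty$ contradicts the dimension $d-1$ lower bound on lc thresholds.

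The main obstacle is precisely this reduction to the inductive setting. The pair $(T_i, B_{T_i})$ produced by adjunction need not be $\epsilon'$-lc for a uniform $\epsilon' > 0$ (coefficients of $B_{T_i}$ may accumulate at $1$), and $-(K_{T_i} + B_{T_i})$ need not be nef and big. To close the induction, one should strengthen the statement to allow generalised pairs of the form $(X, B + M)$ with $M$ the pushforward of a nef and big divisor and with the coefficients of $B$ in a DCC set, so that adjunction outputs remain within the inductive class. Combined with a careful MMP to place $T_i$ and $L_i$ on a convenient common model, and with the bounded complements above to control the reference element, this should force the contradiction and complete the induction.
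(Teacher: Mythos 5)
Your plan starts in the same place as the paper: normalise the threshold, extract a divisor $T_i$ computing it, observe that the coefficient of $T_i$ in the pullback of $L_i$ blows up, and try to turn this into a contradiction by lowering the dimension. But from there the routes diverge sharply, and the gap you flag at the end is genuine and is not closed by the strengthening you propose.

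There are two concrete problems with the adjunction-based reduction. First, the divergent multiplicity $\mult_{T_i}\phi_i^*L_i$ is perpendicular to $T_i$: it measures how much $\phi_i^*L_i$ vanishes along $T_i$, and this does not automatically translate into a singular point of the restricted data $(T_i, B_{T_i}, L_i|_{T_i})$ to which a dimension $d-1$ lc threshold bound could apply. You would need a separate argument converting the large coefficient along $T_i$ into a small lc threshold on $T_i$, and such an argument is not present. Second, and this is the wall you correctly identify, the adjunction output $(T_i,B_{T_i})$ need not be $\epsilon'$-lc for any uniform $\epsilon'>0$ and $-(K_{T_i}+B_{T_i})$ need not be nef and big. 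Enlarging the class to generalised pairs with DCC coefficients is the right instinct in some inductions (e.g.\ ACC for lc thresholds), but here it does not remove the obstruction: coefficients of $B_{T_i}$ can still accumulate at $1$, which forces $\epsilon'\to 0$ and degenerates the inductive lower bound you want to cite. So the proposal, as written, does not close.

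The paper's proof of this theorem (Lemma \ref{l-local-lct-bab-to-global-lct} together with Proposition \ref{p-bnd-lct-global-weak}) takes a different route precisely to avoid adjunction to $T$. After extracting $T$, it runs an MMP on $-T$ to land on a Mori fibre space $Y'\to Z'$, where $T'$ is a divisor that is positive on the fibration. If $\dim Z'>0$ one restricts to a general fibre of $Y'\to Z'$ (not to $T'$), which is automatically a Fano of the right type of dimension $<d$, so the induction (or BAB) there directly bounds the coefficient of $T'$ in $L_{Y'}$ from above; combined with $\mu_{T'}(1-s)L_{Y'}\ge(1-s)(\epsilon-\epsilon')/s$ this gives the lower bound on $s$. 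If $Z'$ is a point, $Y'$ is a $\Q$-factorial Fano of Picard number one, and Proposition \ref{p-bnd-lct-global-weak} bounds the coefficients of any $L\sim_\R -K_{Y'}$. That proposition is the real engine, and its proof needs Theorem \ref{t-bnd-lct} (the bounded-degree lc threshold statement), which in turn rests on effective birationality, complements near a divisorial lc centre (Theorem \ref{t-compl-near-lcc}), finite covers of $\PP^d$, and toric boundedness. Nothing in your sketch substitutes for this machinery; the theorem does not follow from lower-dimensional instances of itself plus complements alone.
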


\vspace{0.3cm}
Although one may try to derive the theorem from \ref{t-BAB} but we actually do the opposite, that is, we will 
use the theorem to prove \ref{t-BAB} (see \ref{t-from-lct-to-bnd-var} below). The theorem was conjectured by Ambro [\ref{Ambro}] who proved it in the toric case.
Jiang [\ref{Jiang}][\ref{Jiang-2}] proved it in dimension two. 
It is worth mentioning that they both try to relate lc thresholds to
boundedness of Fano's  but our approach is entirely different.

The lc threshold of an $\R$-linear system $|A|_\R$ is defined as an infimum of usual lc thresholds. 
Tian [\ref{Tian}, Question 1] asked whether the infimum is a minimum when $A=-K_X$ and $X$ is Fano.  
The question was reformulated and generalised to log Fano's in [\ref{cheltsov-shramov}, 
Conjecture 1.12].  The next result gives a positive 
answer when the lc threshold is at most $1$.

\begin{thm}\label{t-global-lct-attained}
Let $(X,B)$ be a projective klt pair such that $A:=-(K_X+B)$ is nef and big. 
Assume that 
$$
\lct(X,B,|A|_\R)\le 1.
$$  
Then there is $0\le D\sim_\R A$ such that 
$$
\lct(X,B,|A|_\R)=\lct(X,B,D).
$$
Moreover, if $B$ is a $\Q$-boundary, then we can choose $D\sim_\Q A$, hence 
in particular, the lc threshold is a rational number in this case.
\end{thm}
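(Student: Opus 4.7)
The plan is to reformulate $t := \lct(X,B,|A|_\R)$ as a single infimum over prime divisors over $X$, and then to show that infimum is attained. For each prime divisor $E$ over $X$, set
\[
\tau_E(A) := \sup\{\mult_E L : L \ge 0,\ L \sim_\R A\},
\]
which is finite because $A$ is big. Swapping the two infima in $\lct(X,B,L) = \inf_E a(E,X,B)/\mult_E L$ gives
\[
t = \inf_E \frac{a(E,X,B)}{\tau_E(A)},
\]
with the outer infimum taken over prime divisors $E$ with $\tau_E(A) > 0$. For $A$ big, a standard compactness argument (effective $L \sim_\R A$ have bounded degree and hence lie in a bounded family) shows the supremum defining $\tau_E(A)$ is attained by some effective $D_E \sim_\R A$. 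Hence, once a divisor $E$ attaining the outer infimum is produced, $D := D_E$ satisfies $\lct(X,B,D) \le a(E,X,B)/\tau_E(A) = t$, and so $\lct(X,B,D) = t$.

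The crux is therefore to show the infimum over $E$ is attained, and here the hypothesis $t \le 1$ is essential. Take $L_i \in |A|_\R$ with $t_i := \lct(X,B,L_i) \searrow t$, and for each $i$ choose a prime divisor $E_i$ over $X$ with $a(E_i, X, B + t_i L_i) = 0$. Then each $(X, B + t_i L_i)$ is lc with $-(K_X + B + t_i L_i) \sim_\R (1 - t_i) A$ nef (and big when $t_i < 1$), so it is of log-Fano or log-Calabi--Yau type with distinguished non-klt place $E_i$. The plan is to apply the complements and extraction machinery developed to prove Theorem \ref{t-bnd-lct-global}: after a mild perturbation (replacing $t_i$ by a slightly smaller value to reach an $\epsilon$-lc setting), one shows that all $E_i$ can be realized as divisors on a single bounded family of birational models of $X$, so a subsequence has $E_i = E$ a fixed prime divisor over $X$. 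Then $a(E, X, B)/\mult_E L_i = t_i \to t$ forces $\mult_E L_i \to a(E, X, B)/t$; since $\mult_E L_i \le \tau_E(A)$, this gives $a(E, X, B)/\tau_E(A) \le t$, and combined with the reverse inequality from the infimum characterization, $E$ attains the infimum.

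The principal obstacle is this boundedness step: the lc pairs $(X, B + t_i L_i)$ have log discrepancy exactly zero along $E_i$, so direct application of $\epsilon$-lc results fails and one must combine perturbation with dlt modification of the pair to land in an $\epsilon$-lc regime. For the moreover statement, when $B$ is a $\Q$-boundary, $A$ is $\Q$-Cartier and $a(E,X,B) \in \Q$; furthermore $\tau_E(A)$ coincides with the analogous supremum over $L \sim_\Q A$ by density (valid since $A$ is big and $\Q$-Cartier), so $\tau_E(A) \in \Q$, the divisor $D_E$ may be chosen $\Q$-linearly equivalent to $A$, and consequently $t \in \Q$ with the desired rational representative.
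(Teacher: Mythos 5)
There is a genuine gap at the heart of your argument, namely the claim that (after a perturbation) the lc places $E_i$ of $(X,B+t_iL_i)$ ``can be realized as divisors on a single bounded family of birational models of $X$, so a subsequence has $E_i=E$ fixed''. Nothing in your sketch controls the valuations $E_i$: the only numerical handle is $a(E_i,X,B)=t_i\mult_{E_i}L_i$, which is not a priori bounded, and even if every $E_i$ were a divisor on one fixed model (say on $X$ itself) there would still be infinitely many candidate prime divisors, so lying on a bounded family of models does not produce a constant subsequence. The machinery behind Theorem \ref{t-bnd-lct-global} and Theorem \ref{t-bnd-lct} gives lower bounds for lc thresholds of $\epsilon$-lc pairs; it does not assert that non-klt places of a sequence of lc pairs stabilize, and your ``mild perturbation to an $\epsilon$-lc setting'' destroys exactly the property $a(E_i,X,B+t_iL_i)=0$ that singles out $E_i$. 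A second, smaller, gap: attainment of $\tau_E(A)$ does not follow from ``bounded degree, hence bounded family'' --- the set $|A|_\R$ is neither finite-dimensional nor compact, and in general an effectivity threshold of this kind need not be attained; here it can be rescued because $(X,B)$ klt with $-(K_X+B)$ nef and big makes $X$, and any model extracting $E$, of Fano type, hence a Mori dream space whose effective cone is closed and rational polyhedral and on which numerical and $\R$-linear equivalence agree, but this argument has to be made explicitly. Finally, you never actually use the hypothesis $\lct(X,B,|A|_\R)\le 1$, whereas it is essential to the statement.

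For comparison, the paper's proof avoids any stabilization of lc places. When $t:=\lct(X,B,|A|_\R)<1$ (Proposition \ref{p-t-global-lct-attained<1}), it extracts a single lc place $T_i'$ of $(X,B+t_iL_i)$, adds a general $H\sim_\R A$ with the fixed coefficient $1-t$ (this is where $t\le 1$ enters), runs an MMP on $-(K_{X_i'}+T_i'+B_i'+(1-t)H_i')$, and uses the ACC for lc thresholds together with the global ACC [\ref{HMX2}, Theorem 1.5] to see that the resulting pair is lc and the MMP ends with a minimal model; semi-ampleness on the Fano type model then yields $P_i\ge 0$ with $P_i\sim_\R tA$ and $(X,B+P_i)$ not klt, so $D=\frac{1}{t}P_i$ works, all from a single index $i$. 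The case $t=1$ is treated separately (Lemma \ref{l-t-global-lct-attained=1}) by splitting into the non-exceptional case, where a suitable $L$ itself computes the threshold, and the exceptional case, which is excluded using Theorem \ref{t-bnd-lct}. To repair your proposal you would need either a genuine argument confining the $E_i$ to a finite set of valuations, or to switch to an ACC-plus-MMP argument of this type.
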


The theorem is not used in the rest of the paper and its proof in the case 
$$
\lct(X,B,|A|_\R)<1
$$ 
relies on [\ref{HMX2}] but does not rely on the other results of this paper nor on the results of [\ref{B-compl}]. 
Ivan Cheltsov informed us that Shokurov has an 
unpublished proof of the theorem in dimension two.\\

{\textbf{\sffamily{Lc thresholds of $\R$-linear systems with bounded degree.}}}
Next we treat lc thresholds associated with divisors on varieties, in a general setting. 
To obtain any useful boundedness result, one needs to impose certain boundedness conditions on the invariants of the 
divisor and the variety. 

\begin{thm}\label{t-bnd-lct}
Let $d,r$ be natural numbers and $\epsilon$ be a positive real number. 
Then  there is a positive real number $t$ depending only on $d,r,\epsilon$ satisfying the following. 
Assume 
\begin{itemize}

\item  $(X,B)$ is a projective $\epsilon$-lc pair of dimension $d$, 

\item $A$ is a very ample divisor on $X$ with $A^d\le r$,

\item $A-B$ is pseudo-effective, and 

\item $M\ge 0$ is an $\R$-Cartier $\R$-divisor with $A-M$ pseudo-effective.\\
\end{itemize}
Then  
$$
\lct(X,B,|M|_\R)\ge \lct(X,B,|A|_\R)\ge t.
$$
\end{thm}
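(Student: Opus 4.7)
The first inequality is immediate from the definitions. By hypothesis pick $N\in |A-M|_\R$; then for any $L\in |M|_\R$ we have $L+N\sim_\R A$ and $L+N\ge L$, so $L+N\in |A|_\R$ and
$$
\lct(X,B,L)\ge \lct(X,B,L+N)\ge \lct(X,B,|A|_\R).
$$
Taking the infimum over $L\in |M|_\R$ yields the first inequality.

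For the lower bound $\lct(X,B,|A|_\R)\ge t$, I would exploit boundedness. Since $A$ is very ample with $A^d\le r$, the pair $(X,A)$ lies in a bounded family: $X$ embeds in some $\PP^N$ with $N$ bounded in terms of $d,r$ and degree at most $r$, so there is a projective morphism $\mathcal{X}\to T$ over a base of finite type with a relatively very ample $\mathcal{A}$ realising every admissible $(X,A)$ as a fibre. The ampleness of $A-B$ gives $A^{d-1}\cdot B < A^d\le r$, while $\epsilon$-lc forces the coefficients of $B$ to lie in $[0,1-\epsilon]$; together these imply $B$ has at most $r/\epsilon$ irreducible components, each of bounded $A$-degree. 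Similarly, any $L\in |A|_\R$ has $A^{d-1}\cdot L = A^d\le r$, bounding its components.

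I would then argue by contradiction. Suppose no such $t$ exists, so there is a sequence of data $(X_i,B_i,A_i,L_i)$ satisfying the hypotheses with $L_i\in |A_i|_\R$ and $\lct(X_i,B_i,L_i)\to 0$. Using the bounded family for $(X_i,A_i)$, together with a Hilbert scheme parametrising the (uniformly bounded in degree and number of components) supports of $B_i$ and $L_i$, extract a subsequence converging in a flat family over a DVR. Call the limiting data $(X_\infty,B_\infty,L_\infty)$ with $L_\infty\in |A_\infty|_\R$, where the coefficients of $B_i$ and $L_i$ converge in $\R$ to those of $B_\infty$ and $L_\infty$.

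Lower semicontinuity of log discrepancies under flat specialisation shows $(X_\infty,B_\infty)$ is still $\epsilon$-lc, so there is $t_\infty>0$ with $(X_\infty, B_\infty+t_\infty L_\infty)$ lc. By openness of the lc locus in the flat family, $(X_i,B_i+t_\infty L_i)$ is then lc for $i\gg 0$, forcing $\lct(X_i,B_i,L_i)\ge t_\infty$ and contradicting $\lct(X_i,B_i,L_i)\to 0$. The main obstacle is the specialisation step in the presence of $\R$-coefficients: since the coefficients of $B_i$ and $L_i$ need not lie in a DCC set, one has to stratify the parameter space by the incidence pattern of the supports and use continuity of log discrepancies in the coefficients. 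The uniform bound on the number of components of $B_i$ (from $\epsilon$-lc and $A-B$ ample) and of $L_i$ (from $A^d\le r$) is what makes such a stratification finite, playing the role normally played by a DCC hypothesis.
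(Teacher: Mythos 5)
Your proof of the first inequality is correct and coincides with the paper's opening observation: given $N\in|A-M|_\R$ and $L\in|M|_\R$, $L+N\in|A|_\R$ and monotonicity of the usual lc threshold gives $\lct(X,B,L)\ge\lct(X,B,L+N)\ge\lct(X,B,|A|_\R)$.

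The argument for the main inequality, however, has a genuine gap, and it occurs at exactly the point that makes this theorem hard. You claim that $\epsilon$-lc plus $A-B$ ample bounds the number of irreducible components of $B$ (``at most $r/\epsilon$ components''), and similarly for $L$. This is false: $\epsilon$-lc bounds the coefficients of $B$ from above by $1-\epsilon$, not from below, and $\deg_A B<r$ is entirely compatible with $B$ having arbitrarily many components whose coefficients tend to $0$. The same goes for $L\in|A|_\R$: one can take $L=\sum_{j=1}^{N}\frac{1}{N}H_j$ for increasingly many divisors $H_j$ of degree one, so $\Supp L$ has unbounded degree. Consequently neither $(X_i,\Supp B_i)$ nor $(X_i,\Supp L_i)$ lies in a bounded family, and the proposed extraction of a limit $(X_\infty,B_\infty,L_\infty)$ in a flat family over a finite-type base does not exist as stated. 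This is precisely what makes the problem delicate: the degree bound controls multiplicities on $X$ itself but provides no control over the supports, over which log resolution is needed, or over the depth of the relevant lc places.

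Your final remark, that stratifying by ``incidence pattern of supports'' will reduce to a finite problem, relies on the same false finiteness claim and so does not repair the argument. More fundamentally, even if you granted bounded supports, the step ``lower semicontinuity of log discrepancies under flat specialisation shows $(X_\infty,B_\infty)$ is still $\epsilon$-lc'' requires a nontrivial result in its own right (inversion of adjunction type arguments), and the openness of the lc locus along an $\R$-divisor with varying real coefficients needs to be argued rather than cited. The paper takes an entirely different, structural route: after reducing to the $\Q$-factorial case, it fixes a divisor $T$ computing the threshold, produces via complements (Theorem~\ref{t-compl-near-lcc} through Proposition~\ref{p-finding-Lambda}) an auxiliary boundary $\Lambda$ with bounded data and with $T$ an lc place of $(X,\Lambda)$, and then uses Propositions~\ref{p-bnd-lct-toroidal-1} and~\ref{p-bnd-lct-toroidal-2} (a reduction to $\PP^d$ by a Noether-normalisation type morphism, followed by toric MMP and the toric case of BAB) to bound the multiplicity $\mu_T\nu^*L$, from which the lower bound on $s$ follows. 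The proof is also by induction on dimension, using Theorem~\ref{t-bnd-lct} and Theorem~\ref{t-BAB} in dimension $\le d-1$, a feature absent from your sketch. A compactness argument of the kind you describe would, in effect, need to reprove the entire BAB circle of ideas from scratch inside the limit construction.
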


\vspace{0.3cm}
This is one of the main ingredients of the proof of Theorem \ref{t-bnd-lct-global} but it is also interesting on its own. 
We explain briefly some of the assumptions of the theorem. The condition $A^d\le r$ means that 
$X$ belongs to a bounded family of varieties, actually, if we choose $A$ general in its linear system, 
then $(X,A)$ belongs to a bounded family of pairs. We can use the divisor $A$ to measure how ``large" 
other divisors are on $X$. Indeed, 
the pseudo-effectivity of $A-B$ and $A-M$, roughly speaking, say 
that the ``degree" of  $B$ and $M$ are bounded from above, that is,
$$
\deg_AB:=A^{d-1}B\le A^d\le r ~~~~~\mbox{and}~~~~~ \deg_AM:=A^{d-1}M\le A^d\le r.
$$ 
Without such boundedness assumptions, one would not find a positive lower bound for the lc threshold.
 For example, if $X=\PP^d$, then one can 
easily find $M$ with arbitrarily small lc threshold if the degree of $M$ is allowed to be large enough. 
The bound on the degree of $B$ is much more subtle.\\

{\textbf{\sffamily{Complements near a non-klt centre.}}}
We prove boundedness of certain ``complements" near a non-klt centre on a projective 
pair. For the definition of complements in the global setting, see \ref{ss-complements}.

\begin{thm}\label{t-compl-near-lcc}
Let $d,p$ be natural numbers. Then there exists a natural number 
$n$ depending only on $d,p$ satisfying the following. Assume 

\begin{itemize}
\item $(X,B)$ is a projective lc pair of dimension $d$,

\item $pB$ is integral, 

\item $M$ is a semi-ample Cartier divisor on $X$ defining a contraction $f\colon X\to Z$,

\item $X$ is of Fano type over $Z$, 

\item $M-(K_X+B)$ is nef and big, and 

\item $S$ is a non-klt centre of $(X,B)$ with $M|_{S}\equiv 0$.\\ 

\end{itemize}
Then there is a $\Q$-divisor $\Lambda\ge B$ such that  
\begin{itemize}
\item $(X,\Lambda)$ is lc over a neighbourhood of $z:=f(S)$, and

\item $n(K_X+\Lambda)\sim (n+2)M$.
\end{itemize}
\end{thm} 
 
This is a key ingredient of the proof of Theorem \ref{t-bnd-lct}. 
 Note that $K_X+\Lambda$ is 
actually a relative $n$-complement of $K_X+B$ over a neighbourhood of $z$ in the sense of [\ref{B-compl}, 2.18].
The important point here is that the complement is not an arbitrary one since $\Lambda$ is 
somehow controlled globally by $M$ as it satisfies the formula $n(K_X+\Lambda)\sim (n+2)M$.\\

We devote the rest of the introduction to rough sketches of the proofs of \ref{t-BAB}, \ref{t-bnd-lct-global}, 
and \ref{t-bnd-lct}.\\

{\textbf{\sffamily{Sketch of the proof of Theorem \ref{t-BAB}.}}} 
We will assume Theorem \ref{t-bnd-lct-global} in dimension $d$.
For simplicity we assume $B=0$.
The idea is to apply [\ref{B-compl}, Proposition 7.13], that is, \ref{t-from-lct-to-bnd-var} below. 
For this it is enough to show that 
there exist natural numbers $m,v$ and a positive real number $t$, all depending only on $d,\epsilon$, such that   
\begin{itemize}
\item $K_X$ has an $m$-complement, 

\item $|-mK_X|$ defines a birational map,

\item $\vol(-K_X)\le v$, and 

\item for any $0\le L\sim_\R -K_X$,  the pair $(X,tL)$ is klt.
\end{itemize}

The number $m$ is given by [\ref{B-compl}, Theorems 1.2 and 1.7]. 
The number $v$ is given by [\ref{B-compl}, Theorem 1.6] assuming \ref{t-BAB} in lower dimension. The number $t$ is given by 
Theorem \ref{t-bnd-lct-global}. In turn we will see that \ref{t-bnd-lct-global} is reduced 
to the general theorem on boundedness of lc thresholds, that is, \ref{t-bnd-lct}.

For reader's convenience we also include some rough explanations for as to why the existence of $m,v,t$ is enough to 
deduce boundedness of $X$. 
Applying [\ref{HX}], it is enough to show that $K_X$ has a klt $n$-complement 
for some bounded number $n\in\N$.
By assumption, there is an lc $m$-complement $K_X+B^+$.
If $X$ is exceptional,  the latter complement is automatically klt, so we are done in this case. 
Here by $X$ being exceptional we mean that $(X,L)$ is klt for every $0\le L\sim_\R -K_X$.

To treat the non-exceptional case the idea is to modify the complement 
$K_X+B^+$ into a klt one. 
We do this using birational boundedness. By induction we can assume that \ref{t-BAB} 
holds in lower dimension, so $X$ is birationally bounded by [\ref{B-compl}, Theorem 1.6]. 
In fact it turns out that $(X,B^+)$ is log birationally bounded, 
that is, there exists a log smooth projective pair $(V,\Sigma)$ belonging to a bounded family of pairs and  
there exists a birational map $V\bir X$ such that 
$\Sigma$ contains the exceptional divisors of $V\bir X$ and the support of 
the birational transform of $B^+$. 

Next we pull back $K_X+B^+$ to a high resolution of $X$ and push it down to $V$ 
and denote it by $K_V+B^+_V$. 
Then  $(V,B^+_V)$ is sub-lc and $m(K_V+B^+_V)\sim 0$. 
Now $\Supp B^+_V$ is contained in $\Sigma$.
So we can use the boundedness of $(V,\Sigma)$  
to perturb the coefficients of $ B^+_V$.  
 More precisely, perhaps after replacing $m$, there is $\Delta_V\sim_\Q B^+_V$ 
such that $(V,\Delta_V)$ is sub-klt and $m(K_V+\Delta_V)\sim 0$. 
Pulling back $K_V+\Delta_V$ to $X$ and denoting it by $K_X+\Delta$ we get a 
sub-klt pair $(X,\Delta)$ with $m(K_X+\Delta)\sim 0$.

Now a serious issue is that $\Delta$ may not be effective, so $K_X+\Delta$ is not necessarily an 
$m$-complement. In fact it is by no means clear that the coefficients of $\Delta$ are even bounded from below. 
However, it is not hard to see that existence of $t$ remedies the situation: indeed, 
if $0\le L\sim_\R -K_X$, then the coefficients of $L$ are bounded from above. This implies that 
the coefficients of $\Delta$ are bounded from below, by construction of $\Delta$. 
The rest of the argument which modifies  $\Delta$ to get a klt $n$-complement for some bounded $n$ 
is an easy application of 
the results of [\ref{B-compl}] on complements.\\

{\textbf{\sffamily{Sketch of the proof of Theorem \ref{t-bnd-lct-global}.}}} 
We will assume Theorem \ref{t-bnd-lct} in dimension $d$ and 
assume Theorem \ref{t-BAB} in lower dimension. Let $(X,B)$ and $A=-(K_X+B)$ be as in Theorem \ref{t-bnd-lct-global} 
in dimension $d$. Replacing $X$ with can assume it is $\Q$-factorial. 
Pick $\epsilon'\in (0,\epsilon)$ and pick $L\in|A|_\R$. 
Let $s$ be the largest number such that $(X,B+sL)$ is $\epsilon'$-lc.
It is enough to show $s$ is bounded from below away from zero. 

There is a prime divisor $T$ over $X$ with  
$$
a(T,X,B+sL)=\epsilon'.
$$
If $T$ is not exceptional over $X$, then  we let $\phi\colon Y\to X$ be the identity morphism
but if $T$ is exceptional over $X$, 
then we let $\phi\colon Y\to X$ be the extremal birational contraction which extracts $T$. 
Let $L_Y=\phi^*L$. One shows  
$\mu_{T}sL_Y\ge \epsilon-\epsilon'$, hence that it is enough to show that $\mu_{T}L_Y$ is bounded from 
above. 

Running an MMP on $-T$, restricting to the general fibres of the resulting Mori fibre 
space, and applying induction on dimension 
reduces the problem to the situation in which $X$ is an $\epsilon$-lc $\Q$-factorial Fano variety with 
Picard number one on which we want to show that $\mu_TL$ is bounded from above for any $L\in|-K_X|_\R$ and any 
prime divisor $T$ on $X$. Using the Picard number one property, we can replace $L$ and assume $\Supp L=T$, hence we can 
assume $L=uT$ with $u=\mu_TL$.

Applying  [\ref{B-compl}, Theorems 1.2, 1.6, 1.7] and Theorem \ref{t-BAB} in lower dimension 
we find a bounded number $n\in\N$ such that $K_X$ has an $n$-complement $K_X+\Omega$, $|-nK_X|$ defines a birational map 
and that $\vol(-K_X)$ is bounded from above. In particular, we deduce that
$(X,\Omega)$ is log birationally bounded. So there is a projective log smooth pair $(V,\Sigma)$ belonging to  
a bounded family and a birational map $X\bir V$ so that $\Sigma$ is reduced whose support contains the exceptional 
divisors of $V\bir X$ and the birational transform of $\Supp \Omega$. 

Pull back $K_X+B,L$ to a high resolution of $X$ and then push down to $V$ and denote the resulting divisors 
by $K_V+B_V,M$.
The main idea of the rest of the proof is to find a boundary $\Delta$ by taking an appropriate 
average between $B_V$ and $\Sigma$ so that 
\begin{itemize}
\item $(V,\Delta)$ is $\epsilon''$-lc for some fixed $\epsilon''>0$, 
\item $(V,\Delta+\frac{1}{u} M)$ is not klt, and 
\item ``degrees" of $\Delta$ and $M$ are bounded with respect to some very ample divisor.
\end{itemize} 
Applying Theorem \ref{t-bnd-lct} gives a 
positive lower bound for $\frac{1}{u}$, hence an upper bound for $u$.\\

{\textbf{\sffamily{Sketch of the proof of Theorem \ref{t-bnd-lct}.}}} 
It is easy to see that 
$$
\lct(X,B,|M|_\R)\ge \lct(X,B,|A|_\R),
$$
so we only need to find a positive lower bound for $\lct(X,B,|A|_\R)$. 
Pick $0\le N\sim_\R A$. 
Let $s$ be the largest number such that $(X,B+sN)$ is $\epsilon'$-lc where $\epsilon'=\frac{\epsilon}{2}$.
It is enough to show $s$ is bounded from below away from zero.   
There is a prime divisor $T$ on birational models of $X$  with log discrepancy 
$$
a(T,X,\Delta:=B+sN)=\epsilon'.
$$  
It is enough to show the multiplicity of $T$ in $\phi^*N$ is bounded from above 
on some resolution $\phi\colon V\to X$ on which $T$ is a divisor. 
We can assume the image of $T$ on $X$ is a closed point $x$ otherwise 
we can cut by hyperplane sections and apply induction on dimension. 
Since 
$$
a(T,X,\Delta)=\epsilon'<1,
$$ 
there is a birational contraction $Y\to X$ extracting $T$ but no other divisors. 
Moreover, we can assume that $-(K_Y+T)$ is ample over $X$, and using ACC for lc thresholds [\ref{HMX2}]
we can assume $(Y,T)$ is lc. 

The next step is to do a ``toroidalisation".
A key ingredient here is provided by the theory of complements, that is, Theorem \ref{t-compl-near-lcc}.
Using ampleness of $-(K_Y+T)$ over $X$, we can find  $\Lambda_Y$ such that 
$(Y,\Lambda_Y)$ is lc near $T$ and $n(K_Y+\Lambda_Y)\sim 0/X$ for a bounded $n\in\N$. 
Crucial point: if $\Lambda$ is the pushdown of $\Lambda_Y$, then after some delicate work we can assume 
$A-\Lambda$ is ample. In particular, the log discrepancy $a(T,X,\Lambda)=0$ and 
$(X,\Supp \Lambda)$ is log bounded. 
Using resolution of singularities we can  assume $(X,\Lambda)$ is log smooth and $\Lambda$ is reduced. 
The advantage of having $\Lambda$ is that now $T$ can be obtained by a sequence of blowups, toroidal 
 with respect to  $(X,\Lambda)$. 
The first step of this sequence is just the blowup of $x$.
One argues that it is enough to bound the number of these blowups.

We can discard any component of $\Lambda$ not passing through $x$, say $\Lambda=S_1+\dots+S_d$. 
A careful analysis of $Y\to X$ allows us to modify the situation so that 
$\Supp \Delta$ does not contain any stratum of $(X,\Lambda)$ apart from $x$. 
This is one of the difficult steps of the proof.

The next step is to do a ``torification".
Since $(X,\Lambda)$ is log smooth and log bounded, 
we can find a surjective finite morphism $X\to \mathbb{P}^d$ such that 
it maps $x$ to the origin 
$$
z=(1:0:\cdots:0),
$$
and it maps $S_i$ onto $H_i$ where $H_1,\dots,H_d$ are the coordinate hyperplanes passing through 
$z$. 
Since $\Supp \Delta$ does not contain any stratum of $(X,\Lambda)$ apart from $x$, 
it is not hard to reduce the problem to a similar problem on $\mathbb{P}^d$. 
From now on we assume 
$X=\mathbb{P}^d$ and that $S_i$ are the coordinate hyperplanes. 
The point of this reduction is that now $(X,\Lambda)$ is not only toroidal but actually toric,
 and $-(K_X+\Lambda)$ is ample. 
In particular, we can modify $\Delta$ so that $K_X+\Delta$ is numerically trivial.

Let $W\to X$ be the sequence of blowups which obtains $T$.
Since the blowups are toric, $W$ is a toric variety.
 If $Y\to X$ is the birational morphism contracting $T$ only, as before, then $Y$ is also a toric variety.
Moreover, if $K_Y+\Delta_Y$ is the 
pullback of $K_X+\Delta$, then $(Y,\Delta_Y)$ is $\epsilon'$-lc and $K_Y+\Delta_Y$ is numerically trivial.
Running MMP on $-K_Y$ we get another toric variety $Y'$ 
which is Fano and  $\epsilon'$-lc. 
By the toric version of \ref{t-BAB} [\ref{A-L-Borisov}],  $Y'$ belongs to a bounded 
family. From this we can produce a klt $m$-complement 
$K_{Y'}+\Omega_{Y'}$ for some bounded $m\in\N$
 which induces a klt $m$-complement 
$K_{Y}+\Omega_{Y}$ which in turn gives a klt $m$-complement 
$K_X+\Omega$. In particular, $(X,\Omega)$ belongs to a bounded family as $m(K_X+\Omega)\sim 0$. 
This implies that $(X,\Omega+u\Lambda)$ is klt for some 
$u>0$ bounded from below away from zero. 
Finally an easy calculation shows that the multiplicity of $T$ in 
the pullback of $\Lambda$ on $W$ is bounded from above
 which in turn implies the number of blowups in $W\to X$ is bounded as required.\\

{\textbf{\sffamily{Acknowledgements.}}}
I would like to thank Florin Ambro, Ivan Cheltsov, Christopher Hacon, Jingjun Han, Yujiro Kawamata, Mihai P\v{a}un, 
Vyacheslav Shokurov, and Yanning Xu as well as the referees for their valuable comments. Thanks to 
Jungkai A. Chen and the National Taiwan University office of the National Center for Theoretical Sciences 
for hosting a workshop on this paper and [\ref{B-compl}] and thanks to the participants for their comments.
It is likely that I am forgetting other people who have given me helpful comments on the paper 
in the last few years: I would like to thank those as well.

%%%%%%%%%%%%%%%%%%%%
%%%%%%%%%%%%%%%%%%%%%
\section{\bf Preliminaries}

All the varieties in this paper are defined over an algebraically closed field $k$ of characteristic zero
unless stated otherwise.

\subsection{Divisors}\label{ss-divisors}
 Let $X$ be a normal variety and $D=\sum d_iD_i$ be an $\R$-divisor where $D_i$ are the distinct 
irreducible components of $D$. We sometimes denote the coefficient $d_i$ by $\mu_{D_i}D$. 
Let $Y$ also be a normal variety and $\phi\colon X\bir Y$ be a
birational map whose inverse does not contract any divisor.
We often denote $\phi_*D$ by $D_Y$.

Now let $X$ be a normal projective variety of dimension $d$ and $A,D$ be $\R$-Cartier 
$\R$-divisors. We define the \emph{degree} of $D$ with respect to $A$ to be the intersection number $\deg_AD:=A^{d-1}D$ 
when $d>1$ and $\deg_AD:=\deg D$ if $d=1$ where $\deg D$ denotes the usual degree of divisors on curves.
If $A-D$ is pseudo-effective, then one easily sees that $\deg_AD\le \deg_AA$. 

We can similarly define $\deg_AD$ even if $D$ is not $\R$-Cartier, for example, when $A$ is an ample $\Q$-divisor 
because in this case we can express $A^{d-1}$ as a $1$-cycle in the smooth locus of $X$.

The volume of an $\R$-divisor $D$ on a normal projective variety $X$ of dimension $d$ is defined as 
$$
\vol(D)=\limsup_{m\to \infty} \frac{h^0(\rddown{mD})}{m^d/d!}. 
$$\

\subsection{Pairs and singularities}\label{ss-pairs}

A \emph{sub-pair} $(X,B)$ consists of a normal quasi-projective variety $X$ and an $\R$-divisor 
$B$ with coefficients in $(-\infty, 1]$ such that $K_X+B$ is $\R$-Cartier. If $B\ge 0$, we call $B$ a \emph{boundary} 
and call $(X,B)$ a \emph{pair}. 

Let $\phi\colon W\to X$ be a log resolution of  a sub-pair $(X,B)$. Let $K_W+B_W$ be the 
pullback of $K_X+B$. The \emph{log discrepancy} of a prime divisor $D$ on $W$ with respect to $(X,B)$ 
is defined as 
$$
a(D,X,B):=1-\mu_DB_W.
$$
We say $(X,B)$ is \emph{sub-lc} (resp. \emph{sub-klt})(resp. \emph{sub-$\epsilon$-lc}) if 
 $a(D,X,B)$ is $\ge 0$ (resp. $>0$)(resp. $\ge \epsilon$) for every $D$. This means that  
every coefficient of $B_W$ is $\le 1$ (resp. $<1$)(resp. $\le 1-\epsilon$). If $(X,B)$ is a pair, 
we remove the sub and just say it is lc (resp. klt)(resp. $\epsilon$-lc). 
Note that since $a(D,X,B)=1$ for most prime divisors, we necessarily have $\epsilon\le 1$.

Let $(X,B)$ be a pair. An \emph{non-klt place} of $(X,B)$ is a prime divisor $D$ over $X$, that is, on 
birational models of $X$ such that $a(D,X,B)\le 0$. A \emph{non-klt centre} is the image on 
$X$ of a non-klt place. When $(X,B)$ is lc, a non-klt place and a non-klt centre are also sometimes 
referred to as an \emph{lc place} and an \emph{lc centre}, respectively.

A \emph{log smooth} pair is a pair $(X,B)$ where $X$ is smooth and $\Supp B$ has simple 
normal crossing singularities. Assume $(X,B)$ is log smooth and assume $B=\sum_1^r B_i$ is reduced 
where $B_i$ are the irreducible components. 
A \emph{stratum} of $(X,B)$ is an irreducible component of $\bigcap_{i\in I}B_i$ for some $I\subseteq \{1,\dots,r\}$.  
Since $B$ is reduced, a stratum is nothing but an lc centre of $(X,B)$ or $X$ itself.

\begin{lem}\label{l-average-boundary}
If $(X,B)$ and $(X,B')$ are sub-pairs and $\Delta=tB+(1-t)B'$ for some real number $t\in[0,1]$, then 
$$
a(D,X,\Delta)=ta(D,X,B)+(1-t)a(D,X,B')
$$
for any prime divisor $D$ over $X$. 
In particular, if $(X,B)$ is sub-$\epsilon$-lc and $(X,B')$ is sub-$\epsilon'$-lc, then 
$(X,\Delta)$ is sub-$(t\epsilon+(1-t)\epsilon')$-lc.
\end{lem}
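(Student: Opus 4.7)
The plan is to reduce both assertions to the linearity of pullback of $\R$-Cartier $\R$-divisors under a birational morphism, applied to a single well-chosen log resolution.

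First I would fix a log resolution $\phi\colon W\to X$ that is simultaneously a log resolution of $(X,B)$, $(X,B')$, and $(X,\Delta)$, and on which the given prime divisor $D$ appears as a divisor (if $D$ is a divisor over $X$ rather than on $X$, replace $\phi$ by a higher model extracting $D$). Writing the standard identities
$$K_W+B_W=\phi^*(K_X+B),\qquad K_W+B'_W=\phi^*(K_X+B'),\qquad K_W+\Delta_W=\phi^*(K_X+\Delta),$$
the point is that $K_X+\Delta=t(K_X+B)+(1-t)(K_X+B')$ as $\R$-Cartier $\R$-divisors, so pulling back and subtracting $K_W$ from both sides yields
$$\Delta_W=tB_W+(1-t)B'_W.$$
Taking the coefficient along $D$ gives $\mu_D\Delta_W=t\mu_DB_W+(1-t)\mu_DB'_W$, and since $1=t+(1-t)$, we get
$$a(D,X,\Delta)=1-\mu_D\Delta_W=t(1-\mu_DB_W)+(1-t)(1-\mu_DB'_W)=ta(D,X,B)+(1-t)a(D,X,B'),$$
which is the first claim.

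For the second claim, assume $t\in[0,1]$ (as is implicit in the intended application, since otherwise $\Delta$ need not even have boundary coefficients). If $a(D,X,B)\ge\epsilon$ and $a(D,X,B')\ge\epsilon'$ for every prime divisor $D$ over $X$, then the displayed identity immediately gives $a(D,X,\Delta)\ge t\epsilon+(1-t)\epsilon'$ for every such $D$, which by definition says $(X,\Delta)$ is sub-$(t\epsilon+(1-t)\epsilon')$-lc.

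There is no real obstacle here; the only mild care needed is to use one common log resolution so that all three pullback formulas refer to the same ambient model, and to note that linearity of pullback is valid for $\R$-Cartier $\R$-divisors, which is exactly the hypothesis built into the definition of a sub-pair in \ref{ss-pairs}.
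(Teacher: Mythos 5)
Your argument is correct and is essentially the paper's own proof: both take a single common log resolution (extracting $D$ if necessary), use linearity of pullback of $\R$-Cartier $\R$-divisors to get $\Delta_W=tB_W+(1-t)B_W'$, and read off the coefficient along $D$. The remarks about needing one common model and about $t\in[0,1]$ for the second claim are sensible but do not change the substance.
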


We leave the proof to the reader.

\subsection{Fano pairs}

A pair $(X,B)$ is called \emph{Fano} (resp. \emph{weak Fano}) if it is lc and $-(K_X+B)$ is ample 
(resp. nef and big). When $B=0$ we just say $X$ is Fano (resp. weak Fano). 
A variety $X$ is of \emph{Fano type} if $(X,B)$ is klt weak Fano for some $B$. 
By [\ref{BCHM}], a Fano type variety is a Mori dream space, so we can run an MMP 
on any $\R$-Cartier $\R$-divisor on $X$ and it terminates.

\subsection{Minimal models, Mori fibre spaces, and MMP}

Let $ X\to Z$ be a
projective morphism of normal varieties and $D$ be an $\R$-Cartier $\R$-divisor
on $X$. Let $Y$ be a normal variety projective over $Z$ and $\phi\colon X\bir Y/Z$
be a birational map whose inverse does not contract any divisor. 
Assume $D_Y:=\phi_*D$ is also $\R$-Cartier and that 
there is a common resolution $g\colon W\to X$ and $h\colon W\to Y$ such that
$E:=g^*D-h^*D_Y$ is effective and exceptional$/Y$, and
$\Supp g_*E$ contains all the exceptional divisors of $\phi$.

Under the above assumptions we call $Y$ 
a \emph{minimal model} of $D$ over $Z$ if $D_Y$ is nef$/Z$.
On the other hand, we call $Y$ a \emph{Mori fibre space} of $D$ over $Z$ if there is an extremal contraction
$Y\to T/Z$ with $-D_Y$  ample$/T$ and $\dim Y>\dim T$.

If one can run a \emph{minimal model program} (MMP) on $D$ over $Z$ which terminates 
with a model $Y$, then $Y$ is either a minimal model or a Mori fibre space of 
$D$ over $Z$. If $X$ is a Mori dream space, 
eg if $X$ is of Fano type over $Z$, then such an MMP always exists by [\ref{BCHM}].

\subsection{Plt pairs}\label{ss-plt-blowup}

In this subsection we construct plt models associated with certain lc pairs. This is 
similar to the construction of plt blowups [\ref{Prokhorov-plt-blowups}] 
(also see [\ref{PSh-I}, Definition 3.5] and [\ref{Xu-plt}, Lemma 1]).

\begin{lem}\label{l-plt-blowup}
Assume  $(X,B)$ is an lc pair. Assume that $(X,B)$ is not klt but $(X,C)$ is klt for some boundary $C$. 
Then there exist a prime divisor $T$ over $X$ and a projective birational morphism $\phi\colon Y\to X$ 
such that 
\begin{itemize}
\item either $\phi$ is small or it contracts $T$ but no other divisors,

\item $(Y,T)$ is plt,

\item $-(K_Y+T)$ is ample over $X$, and 

\item $a(T,X,B)=0$.
\end{itemize}
\end{lem}
\begin{proof}
Assume that there is a boundary $\Gamma$ and a prime divisor $T$ over $X$ such that 
$$
a(T,X,B)=a(T,X,\Gamma)=0
$$ 
and such that $(X,\Gamma)$ has no lc place other than $T$. Replacing $C$ with 
$(1-t)B+tC$ for a sufficiently small real number $t>0$, we can assume that $a(T,X,C)\le 1$, 
by Lemma \ref{l-average-boundary}. If $T$ is not exceptional over $X$, let $Y\to X$ be a small $\Q$-factorialisation. 
But if $T$ is exceptional over $X$, let $Y\to X$ be the 
birational contraction from $\Q$-factorial $Y$ which extracts $T$ but no other divisor. 
Let $K_Y+C_Y$ be the pullback of $K_X+C$. Then $(Y,C_Y)$ is klt.

Run an MMP on $-(K_Y+T)$ over $X$ and let $Y'$ be the resulting model; we can run such an MMP 
as $Y$ is of Fano type over $X$. 
Next let $Y'\to Y''/X$ be the contraction defined by $-(K_{Y'}+T')$. It turns out that $T$ 
is not contracted over $Y''$: indeed, otherwise $(Y'',0)$ would be lc but not klt because 
$K_{Y'}+T'\sim_\Q 0/Y''$, and this contradicts the fact that $(Y'',C_{Y''})$ is klt.   
Now replace $Y,T$ with $Y'',T''$ where $T''$ is the pushdown of $T$ to $Y''$. 
By construction, $-(K_Y+T)$ is ample over $X$. 
Moreover, if $K_Y+\Gamma_Y$ is the pullback of $K_X+\Gamma$, then 
$(Y,\Gamma_Y)$ is plt because $a(D,Y,\Gamma_Y)>0$ for every prime divisor $D$ over $X$ 
other than $T$. So $(Y,T)$ is plt and $Y\to X$ is the desired morphism. 

We will find $\Gamma,T$ as in the first paragraph.
 Let $\psi\colon W\to X$ be a log resolution of $(X,B+C)$ and write 
$$
K_W+B_W=\psi^*(K_X+B) ~~~\mbox{and} ~~~ K_W+C_W=\psi^*(K_X+C). 
$$ 
Write 
$$
B_W-C_W\sim_\R H_W+G_W/X
$$ 
where $H_W\ge 0$ is ample over $X$ and $G_W\ge 0$. 
Let $H,G$ be the pushdowns of $H_W,G_W$ to $X$. Note that $H_W+G_W\sim_\R 0/X$, so $H+G$ is $\R$-Cartier. 
Replacing $\psi$ with a higher resolution and replacing $H_W,G_W$ appropriately 
we can assume that $\psi$ is a log resolution of 
$$
(X,B+C+H+G).
$$ 
Moreover, replacing $C$ with $(1-t)B+tC$ for a sufficiently small real number 
$t>0$ we can assume that the coefficients of $B_W-C_W$ are sufficiently close to $0$.

Let $E$ be the sum of the components of $B_W$ with coefficient $1$. Since $(X,B)$ is not klt, $E\neq 0$. 
First assume that $E$ and $G_W$ have no common component. Then 
we can pick a small real number $a>0$ and find 
$$
\Gamma_W\sim_\R B_W+aH_W+aG_W
$$
so that $(W,\Gamma_W)$ is sub-lc, $(W,\Supp \Gamma_W)$ is log smooth, 
$\mu_T\Gamma_W=1$ for some component $T$ of $E$, all the other coefficients of $\Gamma_W$ are $<1$, 
and $\Gamma:=\psi_*\Gamma_W$ is effective. Since $K_Y+\Gamma_Y\sim_\R 0/X$, the pair 
$(X,\Gamma)$ is lc with the unique lc place $T$. 
So we are done in this case. 

Now assume $G_W$ and $E$ have common components.
Recall that the coefficient $\mu_TC_W$ of each component $T$ of $E$ is sufficiently close to $1$. 
Let $a>0$ be the largest real number such that $(W,C_W+aG_W)$ is sub-lc. Then we 
can assume that each component of $C_W+aG_W$ with coefficient $1$ is a component of $E$. 
Thus we can find 
$$
\Gamma_W\sim_\R C_W+aH_W+aG_W
$$
so that $(W,\Gamma_W)$ is sub-lc, $(W,\Supp \Gamma_W)$ is log smooth, 
$\mu_T\Gamma_W=1$ for some component $T$ of $E$, all the other coefficients of $\Gamma_W$ are $<1$, 
and $\Gamma:=\psi_*\Gamma_W$ is effective. Since $K_Y+\Gamma_Y\sim_\R 0/X$, the pair 
$(X,\Gamma)$ is lc with the unique lc place $T$. 
So we are again done. 

\end{proof}

\subsection{Bounded families of pairs}\label{ss-bnd-couples}
We say a set $\mathcal{Q}$ of normal projective varieties is \emph{birationally bounded} (resp. \emph{bounded}) 
if there exist finitely many projective morphisms $V^i\to T^i$ of varieties  
such that for each $X\in \mathcal{Q}$ there exist an $i$, a closed point $t\in T^i$, and a 
birational isomorphism (resp. isomorphism) $\phi\colon V^i_t\bir X$  where $V_t^i$ is the fibre of $V^i\to T^i$ over $t$.

Next we will define boundedness for couples. 
A \emph{couple} $(X,S)$ consists of a normal projective variety $X$ and a  divisor 
$S$ on $X$ whose coefficients are all equal to $1$, i.e. $S$ is a reduced divisor. 
We use the term couple instead of pair because $K_X+S$ is not assumed to be $\Q$-Cartier and 
$(X,S)$ is not assumed to have good singularities.
 
We say that a set $\mathcal{P}$ of couples  is \emph{birationally bounded} if there exist 
finitely many projective morphisms $V^i\to T^i$ of varieties and reduced divisors $C^i$ on $V^i$ 
such that for each $(X,S)\in \mathcal{P}$ there exist an $i$, a closed point $t\in T^i$, and a 
birational isomorphism $\phi\colon V^i_t\bir X$ such that $(V^i_t,C^i_t)$ is a couple and 
$E\le C_t^i$ where $V_t^i$ and $C_t^i$ are the fibres over $t$ of the morphisms $V^i\to T^i$ 
and $C^i\to T^i$, respectively, and $E$ is the sum of the 
birational transform of $S$ and the reduced exceptional divisor of $\phi$.
We say $\mathcal{P}$ is \emph{bounded} if we can choose $\phi$ to be an isomorphism. 
  
A set $\mathcal{R}$ of projective pairs $(X,B)$ is said to be {log birationally bounded} (resp. log {bounded}) 
if the set of the corresponding couples $(X,\Supp B)$ is birationally bounded (resp. bounded).
Note that this does not put any condition on the coefficients of $B$, eg we are not requiring the 
coefficients of $B$ to be in a finite set.

\subsection{Effective birationality and birational boundedness}

In the next few subsections, we recall some of the main results of [\ref{B-compl}] 
which are needed in this paper.

\begin{thm}[{[\ref{B-compl}, Theorem 1.2]}]\label{t-eff-bir-e-lc}
Let $d$ be a natural number and $\epsilon$ be a positive real number. Then there is a natural 
number $m$ depending only on $d$ and $\epsilon$ such that if $X$ is any $\epsilon$-lc weak Fano 
variety of dimension $d$, then $|-mK_X|$ defines a birational map.
\end{thm}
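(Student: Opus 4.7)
My plan is to reduce the effective birationality of $|{-mK_X}|$ to (a) a two-sided bound $0<v(d,\epsilon)\le \vol(-K_X)\le V(d,\epsilon)$, and (b) a standard tie-breaking construction of isolated non-klt centres. Once these are in place, showing that $|{-mK_X}|$ is birational for some $m=m(d,\epsilon)$ amounts to producing, for any two general points $x_1,x_2\in X$, an effective $\Q$-divisor $\Delta\sim_\Q -\lambda K_X$ with $\lambda$ bounded, such that $(X,\Delta)$ has an isolated minimal non-klt centre at $x_1$ not containing $x_2$. Kawamata-Viehweg vanishing applied to the multiplier ideal $\mathcal{J}(X,\Delta)\otimes\mathcal{O}_X(-mK_X)$ then lifts sections from the centre and separates $x_1$ from $x_2$ inside $|{-mK_X}|$.

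The decisive input is the lower volume bound $v$. For this I would build bounded complements for $X$: produce $n=n(d,\epsilon)$ and an effective $\Q$-divisor $B^+$ with $n(K_X+B^+)\sim 0$ and $(X,B^+)$ lc. Existence of such an $n$-complement yields $|{-nK_X}|\ne\emptyset$, and the $\epsilon$-lc hypothesis keeps the coefficients of $B^+$ bounded away from $1$, which translates to a uniform lower bound on $\vol(-K_X)$. The construction itself proceeds by induction on $d$: take a plt blowup extracting a divisorial lc place $T$ (as in Lemma~\ref{l-plt-blowup}), obtain a lower-dimensional complement on $T$ via adjunction and the inductive hypothesis, and lift it back to $X$ through a Kawamata-Viehweg-style argument in the spirit of Theorem~\ref{t-compl-near-lcc}. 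The upper bound $V$ is much softer and follows from ACC-type results for volumes of $\epsilon$-lc weak Fanos [\ref{HMX2}].

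With $\vol(-K_X)$ sandwiched between $v$ and $V$, I would run the Angehrn-Siu/Nadel non-klt-centre machinery. The lower bound $v$ guarantees that for $\lambda$ depending only on $v$ and $d$ there is a $\Q$-divisor $D\sim_\Q -\lambda K_X$ with $\mult_{x_1}D>d$, and the upper bound on $\lambda$ keeps the eventual $m$ bounded; the upper bound on $\vol$ together with generality of $x_2$ lets us arrange $x_2\notin\Supp D$. Tie-breaking perturbs $D$ to yield an isolated minimal non-klt centre $Z\ni x_1$ avoiding $x_2$. Adjunction to $Z$ produces a klt weak-Fano-type pair in lower dimension, and effective non-vanishing on $Z$—either by induction on $d$ or by a direct Kollár-type argument—provides sections on $Z$ which lift back to $X$ by vanishing, separating $x_1$ from $x_2$.

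The main obstacle is the construction of bounded complements used in the second step; this is the central technical result of [\ref{B-compl}] and requires a delicate inductive scheme combining plt blowups, adjunction for (semi-)lc pairs, and effective non-vanishing in lower dimensions. The rest of the argument is fairly routine once the two-sided volume bound is established.
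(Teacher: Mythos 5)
The paper does not prove this statement; it is quoted verbatim from [\ref{B-compl}, Theorem 1.2] and used as a black box (see also \ref{t-BAB-to-bnd-vol} and \ref{t-bnd-compl}, which are quoted the same way). So the only meaningful comparison is with the proof in [\ref{B-compl}] itself, and there your sketch has a genuine structural flaw.

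Your reduction to a two-sided volume bound is circular at the crucial point. You propose to derive a lower bound $\vol(-K_X)\ge v(d,\epsilon)$ from the existence of bounded complements. But an $n$-complement only gives $-nK_X\sim nB^+\ge 0$, i.e.\ non-vanishing of $h^0(-nK_X)$; it says nothing about $\vol(-K_X)=\vol(B^+)$ being bounded away from zero, and the claim that ``$\epsilon$-lc keeps the coefficients of $B^+$ bounded away from $1$, which translates to a uniform lower bound on $\vol(-K_X)$'' does not follow. In fact, for $\epsilon$-lc weak Fanos the lower volume bound is essentially \emph{equivalent} to the effective birationality you are trying to prove (if $|-mK_X|$ is birational then $\vol(-K_X)\ge m^{-d}$, and conversely the known route to a lower bound passes through effective birationality). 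So you cannot take it as an input. Separately, your assertion that the upper bound $V(d,\epsilon)$ ``is much softer and follows from ACC-type results'' is also off: that is exactly Theorem \ref{t-BAB-to-bnd-vol} (= [\ref{B-compl}, Theorem 1.6]), which \emph{requires} Theorem \ref{t-BAB} in dimension $d-1$ and is a nontrivial result in its own right. Notice, moreover, that the proof of effective birationality in [\ref{B-compl}] does not use an upper volume bound at all.

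What the proof in [\ref{B-compl}] actually does is subtler. It introduces an auxiliary integer $n$ with $\vol(-nK_X)$ exceeding a fixed threshold $(2d)^d$, with no a priori bound on $n$, and then works to bound $m$ (the birationality threshold) relative to $n$ and ultimately absolutely. The heavy lifting is precisely at the step you wave away as ``effective non-vanishing on $Z$ --- either by induction on $d$ or by a direct Koll\'ar-type argument.'' One must control the adjunction of $-K_X$ to a minimal non-klt centre $Z$, show the induced divisor data on $Z$ sits in a DCC/bounded set, complete it to an lc pair, and invoke \emph{lower-dimensional} bounded complements (not complements on $X$ itself) to get the sections one needs. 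This is also where your sketch conflates the two theorems: the plt-blowup/adjunction/lift scheme you describe is the mechanism for the complement theorem ([\ref{B-compl}, Theorem 1.1/1.7]), not for effective birationality; in [\ref{B-compl}] effective birationality in dimension $d$ is proved \emph{before} bounded complements in dimension $d$, using complements only in dimension $<d$. So the high-level ingredients you name are in the toolbox, but the logical order you propose (complements $\Rightarrow$ volume bounds $\Rightarrow$ Angehrn--Siu) would not get off the ground.
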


\begin{thm}[{[\ref{B-compl}, Theorem 1.6]}]\label{t-BAB-to-bnd-vol}
Let $d$ be a natural number and $\epsilon$ be a positive real number. 
Assume Theorem \ref{t-BAB} holds in dimension $d-1$. Then there is a 
number $v$ depending only on $d$ and $\epsilon$ such that if $X$ is an $\epsilon$-lc weak Fano variety 
of dimension $d$, then the volume $\vol(-K_X)\le v$. In particular, such $X$ are birationally bounded. 
\end{thm}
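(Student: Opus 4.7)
The plan is a contradiction argument combined with induction via the assumed Theorem~\ref{t-BAB} in dimension $d-1$. Suppose there is a sequence $X_i$ of $\epsilon$-lc weak Fano varieties of dimension $d$ with $\vol(-K_{X_i})\to\infty$. Fix once and for all $m=m(d,\epsilon)$ provided by Theorem~\ref{t-eff-bir-e-lc}, so that $|-mK_{X_i}|$ defines a birational map for every $i$; this will only be used at the end to deduce the ``in particular'' clause after the volume bound is secured.

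The first ingredient is to convert large volume into a non-klt centre through a general point. For a very general, hence smooth, point $x\in X:=X_i$, Riemann--Roch gives $h^0(-kK_X)\sim \vol(-K_X)\,k^d/d!$ while vanishing of order $kn$ at $x$ imposes only $\binom{kn+d-1}{d}\sim (kn)^d/d!$ linear conditions. Hence for every $\mu_0>0$ with $\vol(-K_X)>(d/\mu_0)^d$ one produces $0\le B\sim_\Q -\mu_0 K_X$ with $\mult_x B>d$, so that $(X,B)$ fails to be klt at $x$. A standard tie-breaking trick---rescale $B$ by its local lc threshold and perturb by a small general element of $|-K_X|_\Q$---then yields $\Delta\sim_\Q -\mu' K_X$ with $\mu'<1$ such that $(X,\Delta)$ is log canonical, $-(K_X+\Delta)$ is ample, and $(X,\Delta)$ possesses a unique non-klt place whose centre is an irreducible subvariety $V\ni x$ of dimension at most $d-1$.

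By Kawamata subadjunction, $(K_X+\Delta)|_V\sim_\R K_V+B_V+P_V$ with $B_V\ge 0$, $P_V$ pseudo-effective, $(V,B_V)$ klt, and $-(K_V+B_V)$ big (and nef after a small adjustment). A careful discrepancy computation, exploiting that $X$ is $\epsilon$-lc and that $\Delta$ has small coefficient away from $V$ when $\mu_0$ is small, shows $(V,B_V)$ is $\epsilon'$-lc for some $\epsilon'=\epsilon'(d,\epsilon)>0$. The inductive hypothesis (Theorem~\ref{t-BAB} in dimension $d-1$) then forces $V$ to lie in a bounded family, uniformly in $i$.

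Letting $x$ range through a dense open of $X$, the subvarieties $V$ assemble into an algebraic family of bounded members covering $X$. A standard intersection calculation---decomposing $(-K_X)^d$ along this covering family and using that $\deg_{-K_X}V$ is controlled via the boundedness of $(V,B_V)$---produces an upper bound on $\vol(-K_X)$ depending only on $d$ and $\epsilon$, contradicting $\vol(-K_{X_i})\to\infty$. Given the volume bound $\vol(-K_X)\le v$, the birationality of $|-mK_X|$ embeds each $X$ birationally into $\PP^N$ as a subvariety of degree at most $m^d v$, with $N$ bounded via Kawamata--Viehweg vanishing and Riemann--Roch, so $X$ is birationally bounded. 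The principal obstacles are (i)~controlling $\epsilon'$ in the subadjunction step, where the moduli contribution $P_V$ must be absorbed into $B_V$ without destroying the uniform lower bound on discrepancies, and (ii)~converting the qualitative boundedness of the covering family of $V$'s into a genuine intersection-theoretic upper bound on $(-K_X)^d$.
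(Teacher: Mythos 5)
This theorem is quoted in the paper from [B-compl, Theorem 1.6] and no proof is given here, so there is no in-paper argument to compare against; I will compare your sketch to the actual argument in that reference.

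Your general strategy — produce a non-klt centre $V$ through a very general point via multiplicity counting, tie-break to isolate a minimal centre, restrict via Kawamata subadjunction, invoke BAB in dimension $d-1$ to bound the $V$'s, and close with an intersection calculation across the covering family — is indeed the classical ``covering family of tigers'' outline, and it is the right ballpark. However, there is a genuine gap at exactly the point you flag as obstacle (i), and it is not a minor technicality but the central difficulty of the theorem. The claim that $(V,B_V)$ is $\epsilon'$-lc for some $\epsilon'=\epsilon'(d,\epsilon)>0$ does not follow from $X$ being $\epsilon$-lc and $\Delta$ having small coefficient. The discriminant part $B_V$ produced by subadjunction records how $(X,\Delta)$ degenerates transversally along $V$, and this is completely uncoupled from the global $\epsilon$-lc-ness of $X$: even on $X=\PP^d$ with $\Delta$ a tiny multiple of a cone, the different on the minimal centre can carry coefficients arbitrarily close to $1$. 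Minimality of $V$ buys you that $(V,B_V+P_V)$ is klt, not a uniform $\epsilon'$. Nor is $-(K_V+B_V)$ nef ``after a small adjustment'' — $P_V$ is only pseudo-effective as a b-divisor, and making it nef on a fixed model costs you klt-ness. So the inductive hypothesis cannot be applied to $(V,B_V)$ as you have set things up.

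The actual proof in [B-compl] does not try to force $\epsilon'$-lc-ness onto the minimal centre. Instead it runs the adjunction together with the complement machinery: one produces a boundary on $V$ with coefficients in a fixed DCC set via Prokhorov--Shokurov adjunction for fibre spaces/complements, and then uses ACC for lc thresholds and an HMX-style lemma to bound $\vol(-K_X|_V)$ along the covering family — obtaining a contradiction against $\vol(-K_X)\to\infty$ without ever needing uniform control on the log discrepancies of $(V,B_V)$. The BAB hypothesis in dimension $d-1$ enters in bounding volumes of the Fano-type varieties appearing in lower dimension, not by declaring $(V,B_V)$ itself $\epsilon'$-lc. Your obstacle (ii), the passage from bounded restrictions to bounded global volume, is handled by the standard HMX covering-family volume lemma and is the more routine of the two steps; the ``in particular'' clause is fine once the volume bound is in hand, and your Chow-point/degree argument for birational boundedness is essentially correct.
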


In fact the proof of the theorem shows that $(X,0)$ is log bounded.\

\subsection{Complements}\label{ss-complements}

Let $(X,B)$ be a projective pair. Let $T=\rddown{B}$ and $\Delta=B-T$. 
An \emph{$n$-complement} of $K_{X}+B$  is of the form $K_{X}+{B}^+$ where

$\bullet$ $(X,{B}^+)$ is lc, 

$\bullet$ $n(K_{X}+{B}^+)\sim 0$, and 

$\bullet$ $n{B}^+\ge nT+\rddown{(n+1)\Delta}$.\\

\begin{thm}[{[\ref{B-compl}, Theorem 1.7]}]\label{t-bnd-compl}
Let $d$ be a natural number and $\mathfrak{R}\subset [0,1]$ be a finite set of rational numbers.
Then there exists a natural number $n$ depending only on $d$ and $\mathfrak{R}$ satisfying the following.  
Assume $(X,B)$ is a projective pair such that 

$\bullet$ $(X,B)$ is lc of dimension $d$,

$\bullet$ $B\in \Phi(\mathfrak{R})$, that is, the coefficients of $B$ are in $\Phi(\mathfrak{R})$, 

$\bullet$ $X$ is of Fano type, and 

$\bullet$ $-(K_{X}+B)$ is nef.\\\\
Then there is an $n$-complement $K_{X}+{B}^+$ of $K_{X}+{B}$ 
such that ${B}^+\ge B$. Moreover, the complement is also an $mn$-complement for any $m\in \N$.\\ 
\end{thm}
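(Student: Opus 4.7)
The plan is to induct on $d$. The base case $d=1$ reduces to $\PP^1$ and follows from the DCC structure of $\Phi(\mathfrak{R})$ together with an elementary analysis of effective divisors on $\PP^1$. For the inductive step, I would first pass to a $\Q$-factorial dlt model and then, using that $X$ is Fano type (hence a Mori dream space), reduce to the case when $-(K_X+B)$ is both nef and big after a small perturbation absorbed into $B$.

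With those reductions in place, the argument splits according to whether $(X,B)$ admits a non-klt centre. In the non-klt case, I would fix a component $S$ of $\rddown{B}$, obtained via a plt blowup as in Lemma \ref{l-plt-blowup} if necessary, and use adjunction to write $(K_X+B)|_S=K_S+B_S$. The key input is that $B_S$ lies in a hyperstandard set $\Phi(\mathfrak{R}')$ where $\mathfrak{R}'$ is a finite set depending only on $\mathfrak{R}$ and $d$; this is a form of Koll\'ar-Shokurov adjunction. Induction on $S$ then produces an $n'$-complement of $K_S+B_S$, and I would lift it to an $n$-complement of $K_X+B$ via a Kawamata-Viehweg type vanishing argument applied to the sheaf $\mathcal{O}_X(-nK_X-nB^{\lceil\cdot\rceil}-S)$ for an appropriate rounding of $B$, crucially using that $-(K_X+B)$ is nef and big.

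In the klt case, the plan is to manufacture a non-klt centre. Using effective birationality (Theorem \ref{t-eff-bir-e-lc}) and the boundedness of volumes (Theorem \ref{t-BAB-to-bnd-vol}) in the appropriate dimension, one can find a rational number $\lambda$, controlled in terms of $d$ and $\mathfrak{R}$, and a divisor $0\le D\sim_\R -\lambda(K_X+B)$ such that $(X,B+D)$ acquires a non-klt centre. A plt blowup then extracts this centre as a divisor and reduces us to the previous case. The main obstacle is keeping denominators and coefficients bounded throughout: after each manipulation (dlt model, MMP on $-(K_X+B)$, plt blowup, adjunction) the resulting boundary must land in $\Phi(\mathfrak{R}')$ for a finite $\mathfrak{R}'$ depending only on $d$ and $\mathfrak{R}$, and the final $n$ must be made divisible by $I(\mathfrak{R})$ in a way compatible with the lifting step. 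The $mn$-complement clause then follows automatically from that divisibility and $n(K_X+B^+)\sim 0$.
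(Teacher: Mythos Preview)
This theorem is not proved in the present paper at all: it is quoted as Theorem~1.7 of~[\ref{B-compl}] in the Preliminaries section and used as a black box throughout. So there is no ``paper's own proof'' here to compare against; the paper simply imports the result from Birkar's earlier work on anti-pluricanonical systems.

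That said, your sketch is a reasonable high-level summary of the strategy actually used in~[\ref{B-compl}]: induction on $d$, reduction to a $\Q$-factorial dlt situation with $-(K_X+B)$ nef and big, adjunction to a component $S\subset\rddown{B}$ to land in $\Phi(\mathfrak{R}')$ for a controlled finite set $\mathfrak{R}'$, lifting of complements from $S$ via Kawamata--Viehweg vanishing, and in the klt case the creation of a non-klt centre using effective birationality and volume bounds. The genuine difficulties you gloss over are substantial, however. First, the reduction ``absorb a small perturbation into $B$'' to make $-(K_X+B)$ big is not innocent: perturbing destroys the hyperstandard condition $B\in\Phi(\mathfrak{R})$, and in~[\ref{B-compl}] one has to work instead with MMP runs and Mori fibre space reductions while tracking the coefficient set. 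Second, the lifting step requires more than vanishing for a single sheaf; one must carefully round the boundary, control the exceptional locus of a log resolution, and check that the lifted divisor really gives an lc pair (inversion of adjunction). Third, in the klt case, bounding $\lambda$ and ensuring the resulting centre has controlled coefficients after a plt blowup is itself a delicate inductive argument. Your outline captures the architecture but not the load-bearing details.
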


In the theorem 
$$
\Phi(\mathfrak{R}):=\{1-\frac{r}{m} \mid r\in \mathfrak{R}, m\in \N\}.
$$ \

\subsection{From bounds on lc thresholds to boundedness of varieties}

The following result connects lc thresholds and boundedness of Fano varieties, and it is one of the 
main ingredients of the proof of Theorem \ref{t-BAB}.

\begin{thm}[{[\ref{B-compl}, Proposition 7.13]}]\label{t-from-lct-to-bnd-var}
Let $d,m,v$ be natural numbers and let $t_l$ be a sequence of positive real numbers. 
Let $\mathcal{P}$ be the set of projective varieties $X$ such that 

$\bullet$ $X$ is a klt weak Fano variety of dimension $d$, 

$\bullet$ $K_X$ has an $m$-complement, 

$\bullet$ $|-mK_X|$ defines a birational map,

$\bullet$ $\vol(-K_X)\le v$, and 

$\bullet$ for any $l\in\N$ and any $L\in |-lK_X|$,  the pair $(X,t_lL)$ is klt.\\\\
Then $\mathcal{P}$ is a bounded family.
\end{thm}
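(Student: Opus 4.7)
The plan is to combine the birational boundedness guaranteed by the first four bullets with the uniform klt condition of the last bullet to produce, for each $X\in\mathcal{P}$, a klt boundary on $X$ whose support is controlled by a single bounded family of pairs. Boundedness of $X$ will then follow from a log-Calabi-Yau boundedness theorem.

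Since an $m$-complement $K_X+B^+$ gives $mB^+\in |{-mK_X}|$, the complement hypothesis forces $|{-mK_X}|\neq\emptyset$. Combined with the birationality of $|{-mK_X}|$ and the bound $\vol(-K_X)\le v$, a standard argument yields a birationally bounding family: finitely many projective morphisms $\mathcal{V}^i\to T^i$ of varieties equipped with very ample$/T^i$ divisors $\mathcal{H}^i$, such that each $X\in\mathcal{P}$ is birational to some fibre $V_t$ under a map in which $\mathcal{H}^i|_{V_t}$ corresponds to $-mK_X$ modulo exceptional divisors. Picking $H_t\in |\mathcal{H}^i|_{V_t}|$ general, one arranges $(V_t,H_t)$ to be log smooth.

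Fix $X$, take a common log resolution $\phi\colon W\to V_t$ and $\psi\colon W\to X$, and let $N_X$ be the reduced divisor on $X$ given by $\psi_*\phi^*H_t$ plus the reduced $\psi$-exceptional divisor; it has a bounded number of components. An effective generation argument using the $m$-complement together with Serre vanishing on the bounded model $V_t$ and the volume bound produces some $l=l(d,m,v)\in\N$ and a $\Q$-divisor $L\sim_\Q -lK_X$ with $L\ge N_X$. The uniform klt hypothesis then yields $(X,t_l L)$ klt, and since $N_X\le L$, also $(X,\Theta_X)$ klt for $\Theta_X:=t_l N_X$. Interpolating $\Theta_X$ with the complement $B^+$ via Lemma~\ref{l-average-boundary} produces a klt boundary $B^\ast$ on $X$ with $K_X+B^\ast\sim_\Q 0$, whose coefficients lie in a fixed finite set and whose support satisfies $\Supp(B^\ast)\subseteq \Supp(B^+)\cup\Supp(N_X)$ and is birationally bounded.

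Boundedness of $X$ now follows by applying a log-Calabi-Yau boundedness result for klt pairs with fixed positive coefficients and birationally bounded support, of Hacon-McKernan-Xu type as used in [HX] and [B-compl]. The main obstacle is the effective production of $L\ge N_X$ in $|{-lK_X}|_\Q$ with $l$ depending only on $d,m,v$: this requires a uniform Serre-vanishing and global-generation argument that draws on both the $m$-complement (as a source of sections of $-mK_X$) and the volume bound to control the degree of $N_X$ against $-K_X$. Once this step is in place, the remainder is a clean combination of the log-pair boundedness theorem with the birational boundedness established in the first step.
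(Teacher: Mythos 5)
This theorem is imported verbatim from [B-compl, Proposition~7.13]; the present paper does not reproduce a proof, so there is no internal argument to compare against. Your high-level scheme (birational boundedness from the first four bullets, then manufacture a klt log Calabi--Yau boundary with birationally bounded support, then invoke a Hacon--M$^{\rm c}$Kernan--Xu boundedness theorem) is the right shape for this kind of statement, and your identification of the effective-generation step as the principal technical obstacle is fair.

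However, your interpolation step, as written, has a genuine gap. You set $\Theta_X := t_l N_X$ and propose $B^\ast$ as a convex combination of $B^+$ and $\Theta_X$ via Lemma~\ref{l-average-boundary}, claiming $K_X + B^\ast \sim_\Q 0$. But if $B^\ast = cB^+ + (1-c)\Theta_X$, then
$$
K_X + B^\ast = c(K_X + B^+) + (1-c)(K_X + \Theta_X) \sim_\Q (1-c)(K_X + t_l N_X),
$$
and there is no reason for $K_X + t_l N_X$ to be $\Q$-linearly trivial, or even anti-nef: $N_X$ is a reduced support divisor with no numerical relation to $-K_X$. The only klt pair the last bullet hands you with support in $N_X$ is $(X, t_l N_X)$, and the only Calabi--Yau divisor naturally attached to $L$ is $\frac{1}{l}L$; these coincide only when $t_l l = 1$, which is not assumed --- $t_l$ is an arbitrary positive sequence and can be far smaller than $1/l$. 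So the hypothesis does not tell you $(X,\frac{1}{l}L)$ is klt, and interpolating $B^+$ with $\frac{1}{l}L$ leaves you unable to certify klt-ness, while interpolating with $t_l N_X$ leaves you unable to certify the Calabi--Yau (or even anti-nef) condition. This is the crux of the difficulty: the last bullet bounds the klt threshold of $|{-lK_X}|$ from below but by a quantity $t_l$ that has no a priori relation to $1/l$, and closing this gap is precisely where the real work of the proposition lies. Separately, the effective generation claim ($L \ge N_X$ with $L \sim_\Q -lK_X$ and $l$ bounded in terms of $d,m,v$) is asserted without proof; it requires a volume/intersection argument to show $-lK_X - N_X$ is $\Q$-effective for bounded $l$, which is plausible but must actually be carried out, since $N_X$ is not nef and one cannot simply compare volumes. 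As it stands the proposal sketches a plausible strategy but does not establish the theorem.
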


Assuming Theorem \ref{t-BAB} in lower dimension, Theorems \ref{t-eff-bir-e-lc}, \ref{t-BAB-to-bnd-vol}, and \ref{t-bnd-compl} show that all the assumptions 
of \ref{t-from-lct-to-bnd-var} are satisfied for $X$ as in \ref{t-BAB} in dimension $d$ (when $B=0$) except 
the last assumption. We will use Theorem \ref{t-bnd-lct-global} to show that this last assumption is 
also satisfied.

The theorem also has applications to boundedness of $K$-semistable Fano varieties [\ref{Jiang-3}, Theorem 1.1].

\subsection{Sequences of blowups}\label{ss-blowup-seq}
We discuss some elementary aspects of blowups.

(1)
Let $X$ be a smooth variety of dimension $\ge 2$ and let 
$$
\cdots \to X_{i+1}\to X_i\to \cdots \to X_0=X
$$ 
be a (finite or infinite) sequence of smooth blowups,  that is, each $X_{i+1}\to X_{i}$ 
is the blowup along a smooth subvariety $C_i$ of codimension $\ge 2$. 
If the sequence is finite, say $X_p\to X_{p-1}$ is the last blowup,  
the \emph{length} of the sequence is $p$. We denote the exceptional divisor of 
$X_{i+1}\to X_{i}$ by $E_{i+1}$.

(2)
Let $\Lambda$ be a reduced divisor such that $(X,\Lambda)$ is log smooth. 
We say a sequence as in (1) is \emph{toroidal} with respect to $(X,\Lambda)$, if 
for each $i$, the centre $C_i$ is a stratum of $(X_i,\Lambda_i)$ where $K_{X_i}+\Lambda_i$ is the pullback of 
$K_X+\Lambda$ (cf. [\ref{Kollar-toroidal}]). This is equivalent to saying that 
the exceptional divisor of each blowup in the sequence is an lc place of $(X,\Lambda)$.  

\begin{lem}\label{l-coeff-tor-blowup}
Under the above notation, assume we have a finite sequence of smooth blowups of length $p$, 
toroidal with respect to $(X,\Lambda)$, and let 
$$
\phi\colon X_p\to X_0=X
$$ 
be the induced morphism. 
Suppose $C_i\subset E_{i}$ for each $0<i<p$. Then $\mu_{E_p} \phi^*\Lambda\ge p+1$.
\end{lem}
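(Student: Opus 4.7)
The plan is to proceed by induction, proving that the coefficient $M_i := \mu_{E_i}(\phi_{0,i+1}^* \Lambda)$ of the $(i+1)$-st exceptional divisor in $\phi_{0,i+1}^* \Lambda$ (where $\phi_{0,j} \colon X_j \to X_0$ denotes the composition) satisfies $M_i \geq i + 2$. The lemma is then the case $i = p - 1$. The engine is the standard identity that for a blowup $f \colon Y \to X$ of a smooth variety along a smooth center $C$ with exceptional divisor $E$, the coefficient of $E$ in $f^* D$ equals $\mult_C D$, and for a simple normal crossings divisor $D = \sum a_j D_j$ this is $\sum_{D_j \supset C} a_j$.

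First I would establish the invariant that $\Lambda_i$ is reduced and log smooth, with support coinciding with that of $\phi_{0,i}^* \Lambda$. Inductively: since $C_i$ is a stratum of $(X_i, \Lambda_i)$ of codimension $c_i$, we have $\mult_{C_i} \Lambda_i = c_i$, and combining with $K_{X_{i+1}} = \phi_{i+1}^* K_{X_i} + (c_i - 1) E_i$ gives $\Lambda_{i+1} = \widetilde{\Lambda}_i + E_i$, still reduced and log smooth, with the new exceptional $E_i$ entering as a component of coefficient exactly $1$.

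With the invariant in hand, the blowup identity yields $M_i = \mult_{C_i}(\phi_{0,i}^* \Lambda) = \sum_{F \supset C_i} \mu_F(\phi_{0,i}^* \Lambda)$, where $F$ ranges over components of $\Lambda_i$ containing $C_i$. Because $C_i$ is a codimension-$c_i$ stratum, there are exactly $c_i$ such $F$. Each contributes at least $1$: a strict transform of a component of $\Lambda$ contributes exactly $1$, and an earlier exceptional divisor $E_j$ contributes its earlier value $M_j \geq 1$. The hypothesis that the center of each subsequent blowup lies in the exceptional divisor of its predecessor forces $E_{i-1}$ to be one of these $c_i$ components, contributing $M_{i-1}$, while the remaining $c_i - 1$ contribute at least $1$ each. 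Hence $M_i \geq M_{i-1} + (c_i - 1) \geq M_{i-1} + 1$, using $c_i \geq 2$.

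The base case is $M_0 = \mult_{C_0} \Lambda = c_0 \geq 2$, since $C_0$ is a stratum of codimension $\geq 2$. Iterating gives $M_{p-1} \geq p + 1$, which is the desired bound on the coefficient of the last exceptional divisor of $\phi$ in $\phi^* \Lambda$. The only delicate point is the invariant step — confirming that $\Lambda_i$ stays reduced so that the \emph{stratum} and \emph{snc} language persists, and correctly identifying the $c_i$ components of $\Lambda_i$ through $C_i$ — but this follows mechanically from the blowup formulas once set up, and the remainder of the argument is bookkeeping.
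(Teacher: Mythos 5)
Your proof is correct and is essentially the same as the paper's: both arguments proceed by induction on the number of blowups, using that a toroidal center $C_i$ of codimension $c_i\ge 2$ lies on $c_i\ge 2$ components of $\Supp\phi_{0,i}^*\Lambda$, one of which (for $i\ge 1$) is the previous exceptional divisor, so the new exceptional coefficient exceeds the old by at least $1$. The paper's version is terser (it suppresses the $\Lambda_{i+1}=\widetilde\Lambda_i+E_i$ bookkeeping and the multiplicity computation that you spell out), and there is a notational shift — the lemma statement reindexes $E_i$ to mean the exceptional of $X_i\to X_{i-1}$ rather than the subsection's $X_{i+1}\to X_i$ — but the substance is identical.
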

\begin{proof}
 If $0\le i<p$, then $C_i$ is contained in at least two components of 
 $\Lambda_i$ because $C_i$ is an lc centre of $(X_i,\Lambda_i)$ of codimension $\ge 2$. 
 When $i>0$, one of these components is $E_i$, by assumption. 
  Let $\phi_i$ denote $X_i\to X_0$. Then by the equality $\Lambda_i=\Supp \phi_{i}^*\Lambda$ 
  and by induction on $i$ we have 
 $$
 \mu_{E_{i+1}}\phi_{i+1}^*\Lambda\ge \mu_{E_i}\phi_{i}^*\Lambda+1\ge i+2.
 $$ 

\end{proof}

(3) 
Consider a sequence of blowups as in (1) (so this is not necessarily toroidal). 
Let $T$ be a prime divisor over $X$, that is, on birational models of $X$.  
Assume that for each $i$,  $C_i$ is the centre of $T$ on  $X_i$. We then call the 
sequence a \emph{sequence of centre blowups} associated to $T$.
By [\ref{kollar-mori}, Lemma 2.45], such a sequence cannot be infinite, that is, 
after finitely many centre blowups, $T$ is obtained, i.e. there is $p$ such that  
$T$ is the exceptional divisor of $X_p\to X_{p-1}$ 
(here we think of $T$ birationally; if $T$ is fixed on some model, then we should say the exceptional divisor is the 
birational transform of $T$). In this case, we say \emph{$T$ is obtained by the 
sequence of centre blowups} $X_{p}\to X_{p-1}\to \cdots \to X_0=X$.

\subsection{Analytic pairs and analytic neighbourhoods of algebraic singularities}\label{ss-anal-pairs}
For convenience we will recall certain analytic notions shortly. We will use these mainly to 
compare analytic neighbourhoods of algebraic singularities and this involves only elementary aspects of the analytic theory. 
Strictly speaking we can replace these by 
purely algebraic constructions, eg formal varieties and formal neighbourhoods, but we prefer the analytic language as it 
is more straightforward. When we have an algebraic object $A$ defined over $\C$, eg a variety, a morphism, etc, we denote 
the associated analytic object by $A^{\rm an}$.

(1) 
An \emph{analytic pair} $(U,G)$ consists of a normal complex analytic variety $U$ and an 
 $\R$-divisor $G$ with finitely many components and with coefficients in $[0,1]$ such that $K_U+G$ is $\R$-Cartier. 
\emph{Log discrepancies} and notions of \emph{lc, klt, $\epsilon$-lc} singularities can be 
defined for such pairs just as in \ref{ss-pairs} using log resolutions. In this paper, 
we will only need analytic pairs $(U,G)$ which are derived from 
 algebraic pairs with $U$ being smooth. 

Two analytic pairs $(U,G)$ and $(U',G')$ are \emph{analytically isomorphic} 
if  there is an analytic isomorphism, that is, a biholomorphic map $\nu\colon U\to U'$ such that $\nu_*G=G'$.

(2)
Let $(X,B)$ be an algebraic pair over $\C$ (that is, a pair as in \ref{ss-pairs}),  
let $x\in X$ be a closed point, and let $U$ be an analytic neighbourhood of $x$ in the associated 
analytic variety $X^{\rm an}$. 
Take a log resolution $\phi\colon W\to X$ and 
let $V$ be the inverse image of $U$ under 
$\phi^{\rm an}\colon W^{\rm an}\to X^{\rm an}$. Then $\phi^{\rm an}|_V$ is an analytic log resolution 
of $(U,B^{\rm an}|_U)$. In particular, if $(U,B^{\rm an}|_U)$ is $\epsilon$-lc in the analytic sense, then 
$(X,B)$ is $\epsilon$-lc in some algebraic neighbourhood of $x$. Conversely, it is clear that if 
$(X,B)$ is $\epsilon$-lc in some algebraic neighbourhood of $x$, then we can choose $U$ so that 
$(U,B^{\rm an}|_U)$ is $\epsilon$-lc in the analytic sense.

(3)
Now let $(X,B)$ and $(X',B')$ be algebraic pairs over $\C$,  
let $x\in X$ and $x'\in X'$ be closed points, and let $U$ and $U'$ be analytic neighbourhoods 
of $x$ and $x'$, respectively.  Assume that $(U,B^{\rm an}|_U)$ and $(U',B'^{\rm an}|_{U'})$ are analytically 
isomorphic. Then $(X,B)$ is $\epsilon$-lc in some algebraic neighbourhood of $x$ 
if and only if $(X',B')$ is $\epsilon$-lc in some algebraic neighbourhood of $x'$. 
Note that it may well happen that $(X,B)$ and $(X',B')$ are not algebraically isomorphic 
in any algebraic neighbourhoods of $x$ and $x'$.
 For example, the two pairs $(\PP^2,B)$ and $(\PP^2,B')$ 
have isomorphic analytic neighbourhoods where $B$ is an irreducible curve 
with a node at $x$ but $B'$ is the union of two lines intersecting at $x'$.

\subsection{\'Etale morphisms}

We look at singularities of images of a pair under a finite morphism which is \'etale at some point.

\begin{lem}\label{l-etale-0}
Let $(X,B=\sum b_jB_j)$ be a pair over $\C$, $\pi\colon X\to Z$ be a finite morphism, 
$x\in X$ a closed point, and $z=\pi(x)$.  Assume 

$\bullet$ $X$ and $Z$ are smooth near $x$ and $z$, respectively,

$\bullet$ $\pi$ is \'etale at $x$, 

$\bullet$ $\Supp B$ does not contain any point of $\pi^{-1}\{z\}$ except possibly $x$, and 

$\bullet$ $C:=\pi(B):=\sum b_j\pi(B_j)$.\\\\
Then $(X,B)$ is $\epsilon$-lc near $x$ if and only if $(Z,C)$ is $\epsilon$-lc near $z$. More precisely, 
there exist analytic neighbourhoods $U$ and $V$ of $x$ and $z$, respectively, such that 
$\pi^{\rm an}|_U$ induces an analytic isomorphism between $(U,B^{\rm an}|_U)$ and $(V,C^{\rm an}|_V)$.
\end{lem}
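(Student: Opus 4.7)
The plan is to find analytic neighborhoods $U$ of $x$ and $V$ of $z$ such that $\pi^{\rm an}|_U\colon U\to V$ is a biholomorphism sending $B^{\rm an}|_U$ to $C^{\rm an}|_V$; once this is established, the equivalence of the $\epsilon$-lc property near $x$ and near $z$ is immediate from \ref{ss-anal-pairs}(3).

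First, since $\pi$ is finite the fibre $\pi^{-1}\{z\}=\{x=x_1,\dots,x_r\}$ is finite. Choose pairwise disjoint analytic neighborhoods $U_i$ of $x_i$ in $X^{\rm an}$. Using that $\pi^{\rm an}$ is \'etale at $x$, I shrink $U_1$ so that $\pi^{\rm an}|_{U_1}$ is a biholomorphism onto an open subset of $Z^{\rm an}$. By the hypothesis that $\Supp B$ contains no point of $\pi^{-1}\{z\}$ besides possibly $x$, the closed set $\Supp B$ misses $x_i$ for each $i\ge 2$; after further shrinking we may therefore assume $\Supp B\cap U_i=\emptyset$ for $i\ge 2$. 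Because $\pi^{\rm an}$ is proper, there is an open neighborhood $V$ of $z$ with $(\pi^{\rm an})^{-1}(V)\subseteq \bigsqcup_{i=1}^r U_i$ and $V\subseteq \pi^{\rm an}(U_1)$. Setting $U:=U_1\cap (\pi^{\rm an})^{-1}(V)$ gives a biholomorphism $\pi^{\rm an}|_U\colon U\to V$.

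Next I check that the divisors correspond under this biholomorphism. For each component $B_j$, the image $\pi(B_j)$ is irreducible and, since $\pi$ is finite, of dimension $\dim B_j=\dim Z-1$, hence a prime divisor on $Z$. A point of $V$ lies in $(\pi(B_j))^{\rm an}$ iff it is the image of some point of $B_j^{\rm an}\cap (\pi^{\rm an})^{-1}(V)$; by our arrangement, all such points lie in $U$. Since $\pi^{\rm an}|_U$ is a biholomorphism, it carries the analytic set $B_j^{\rm an}|_U$ isomorphically onto $(\pi(B_j))^{\rm an}|_V$. Weighting by $b_j$ and summing over $j$, we obtain that the biholomorphism takes $B^{\rm an}|_U$ to $C^{\rm an}|_V$, so the analytic pairs $(U,B^{\rm an}|_U)$ and $(V,C^{\rm an}|_V)$ are analytically isomorphic, and \ref{ss-anal-pairs}(3) finishes the proof.

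The only real subtlety, and the step where the hypotheses are essential, is ensuring that $(\pi(B_j))^{\rm an}|_V$ receives no extra contribution from the other sheets $U_i$ ($i\ge 2$) of $\pi$ over $z$; this is precisely what the assumption $\Supp B\cap\pi^{-1}\{z\}\subseteq\{x\}$ delivers. The rest is standard: finiteness gives a finite fibre, \'etaleness at $x$ gives a local analytic isomorphism, and properness lets us shrink the target to trap all preimages inside the chosen neighborhoods.
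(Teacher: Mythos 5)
Your proposal is correct and follows essentially the same route as the paper: isolate the points of the finite fibre $\pi^{-1}\{z\}$ in disjoint analytic neighbourhoods, use \'etaleness at $x$ (via the inverse mapping theorem) to get a local biholomorphism onto a neighbourhood of $z$, and invoke the support hypothesis to ensure that $\Supp B$ contributes to $C$ near $z$ only through the sheet containing $x$, so that $B^{\rm an}|_U$ maps exactly onto $C^{\rm an}|_V$. Your version is a bit more explicit about the properness step and the divisor-by-divisor correspondence, but the substance and the role of each hypothesis match the paper's argument.
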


We leave the proof of this lemma and the next lemma to the reader. 

Note that in the previous lemma, $C$ may not even be a boundary away from $z$, that is, it may have 
components not passing through $z$ but with coefficients larger than $1$. 

The next lemma is useful for showing that a morphism is \'etale at a point.

\begin{lem}\label{l-etale}
Let $\pi\colon X\to Z$ be a finite morphism between varieties of dimension $d$, 
$x\in X$ a closed point, and $z=\pi(x)$. Assume $X$ and $Z$ are smooth at $x$ and $z$, respectively.
Assume $t_1,\dots,t_d$ are local parameters at $z$ and that $\pi^*t_1,\dots, \pi^*t_d$ are local 
parameters at $x$. Then $\pi$ is \'etale at $x$.
\end{lem}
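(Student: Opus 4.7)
The plan is to reduce the claim to a standard fact about regular local rings and completions. The hypothesis that $\pi^*t_1,\dots,\pi^*t_d$ are local parameters at $x$ means precisely that they generate the maximal ideal $\mathfrak{m}_x$ of $\mathcal{O}_{X,x}$. Since $t_1,\dots,t_d$ generate $\mathfrak{m}_z$, the image of $\pi^*\colon \mathfrak{m}_z \to \mathfrak{m}_x$ therefore generates $\mathfrak{m}_x$; equivalently, $\mathfrak{m}_x = \pi^*\mathfrak{m}_z \cdot \mathcal{O}_{X,x}$. This is the definition of $\pi$ being unramified at $x$ (the residue field condition is automatic since $k$ is algebraically closed).

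Next I would upgrade this to an isomorphism on completions. Both $\hat{\mathcal{O}}_{Z,z}$ and $\hat{\mathcal{O}}_{X,x}$ are regular local $k$-algebras of dimension $d$; by the Cohen structure theorem, choosing regular parameters identifies them with the formal power series ring $k[[T_1,\dots,T_d]]$. Under these identifications the map $\hat{\pi}^*\colon \hat{\mathcal{O}}_{Z,z}\to \hat{\mathcal{O}}_{X,x}$ sends $t_i\mapsto \pi^*t_i$, i.e.\ a set of regular parameters to a set of regular parameters, so $\hat{\pi}^*$ is an isomorphism of complete local $k$-algebras.

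To conclude étaleness at $x$, I would combine unramifiedness with flatness. Flatness at $x$ comes from miracle flatness: $\pi$ is a finite (hence quasi-finite) morphism between varieties that are smooth, and therefore Cohen--Macaulay, at $x$ and $z$ respectively, with both of the same dimension $d$, so the fibre dimension is the expected $0$; hence $\pi$ is flat at $x$. Being flat and unramified at $x$ is the definition of being étale at $x$. Equivalently, one may argue directly from the completion: a finite morphism between Noetherian schemes is étale at a closed point iff the induced map on completed local rings is an isomorphism (together with the residue field condition), and we have just established this.

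There is essentially no obstacle; the only subtle point worth stating explicitly in the proof is miracle flatness (or the equivalent passage through completions), so that unramifiedness can be promoted to étaleness without further hypotheses.
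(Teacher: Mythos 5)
Your proof is correct, but it takes a genuinely different route from the paper. Both arguments start from the same observation, namely that surjectivity of $\pi^*\colon\mathfrak{m}_z\to\mathfrak{m}_x$ follows from the hypothesis on local parameters, but then diverge. The paper dualizes the induced surjection $\mathfrak{m}_z/\mathfrak{m}_z^2\to\mathfrak{m}_x/\mathfrak{m}_x^2$ to get an injection on Zariski tangent spaces $T_x\to T_z$, notes that both have dimension $d$ by smoothness so this is an isomorphism, and invokes the criterion that a morphism of smooth varieties is \'etale at a closed point precisely when its differential there is an isomorphism. You instead work through the definition of \'etale as flat and unramified: the surjection on maximal ideals gives unramifiedness directly, and you supply flatness via miracle flatness (finite morphism, Cohen--Macaulay source, regular target, equidimensional), or equivalently via the isomorphism on completed local rings obtained from the Cohen structure theorem. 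Your route makes the flatness input explicit, whereas the paper's differential criterion absorbs it; the paper's argument is shorter but leans harder on a packaged fact about smooth morphisms. Both are clean, standard, and correct; the redundancy in your write-up (offering both the miracle-flatness route and the completion route to the same conclusion) could be trimmed to one.
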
\

\subsection{Toric varieties and toric MMP}

We will reduce Theorem \ref{t-bnd-lct} to the case when $X=\PP^d$. To deal with this case we 
need some elementary toric geometry. All we need can be found in [\ref{toric-cox-etal}]. 
Let $X$ be a (normal) $\Q$-factorial projective toric variety. Then $X$ is a Mori dream space, meaning we can run an MMP 
on any $\Q$-divisor $D$ which terminates with a minimal model or a Mori fibre space of $D$. 
Moreover, the MMP is toric, that is, all the contractions and varieties in the process are toric. 
If we have a projective toric morphism $X\to Z$ to a toric variety, then we can run an MMP on $D$ over $Z$ 
which terminates with a minimal model or a Mori fibre space of $D$ over $Z$. See 
[\ref{toric-cox-etal}, \S 15.5] for proofs. 

Now let $\Lambda$ be the sum of some of the torus-invariant divisors on a projective toric variety $X$, and 
assume $(X,\Lambda)$ is log smooth. Let $Y\to X$ be a sequence of blowups toroidal with respect to 
$(X,\Lambda)$. Then $Y$ is also a toric variety as each blowup in the process is a blowup along an 
orbit closure.

\subsection{Bounded small modifications}
The following lemma is useful for reducing problems to the case when the canonical divisor is $\Q$-Cartier.

\begin{lem}\label{l-bnd-small-modification}
Let $d,r$ be natural numbers and $\epsilon$ be a positive real number. 
Then there is a natural number $l$ depending only 
on $d,r,\epsilon$ satisfying the following. Assume that 
\begin{itemize}
\item $(X,B)$ is a projective $\epsilon$-lc pair of dimension $d$, and 

\item $A$ is a very ample divisor on $X$ with $A^d\le r$.
\end{itemize}
 Then there is a projective small birational morphism $\phi\colon Y\to X$ and a very ample divisor $A_Y$ on $Y$ such that 
\begin{itemize}
\item $Y$ is normal, 

\item $K_Y$ is $\Q$-Cartier, 

\item $A_Y^d\le l$, and 

\item $A_Y-\phi^*A$ is ample. 
\end{itemize} 
\end{lem}

\begin{proof}
Since $A$ is very ample and $A^d\le r$, $X$ belongs to a bounded set of projective varieties. 
Then there exist finitely many projective morphisms $V^i\to T^i$ of quasi-projective 
varieties such that each $X$ in the lemma is isomorphic to the fibre of $V^i\to T^i$ over 
some closed point $t\in T^i$ for some $i$. We can also assume that there is a divisor $A_V^i$ on $V^i$ 
which is very ample over $T^i$ and such that $A\sim A_V^i|_X$.
For the rest of the proof we fix $i$ and write $V=V^i, T=T^i, A_V=A_V^i$. We will shrink 
$V,T$ when convenient.
 
Let $W\to V$ be a log resolution. Let $\Sigma$ be the reduced exceptional  
divisor of $W\to V$. 
Let $\Gamma_W=(1-\frac{\epsilon}{2})\Sigma$. Run an MMP on $K_W+\Gamma_W$ over $V$ and let $(U,\Gamma_U)$ be the 
resulting log minimal model of $(W,\Gamma_W)$ over $V$. 
Shrinking $W,U,V,T$ we 
can assume that the following holds: if $t\in T$ is a closed point 
and if $Y,X$ are the fibres of $U\to T$ and $V\to T$ over $t$, respectively, 
then 
\begin{itemize}
\item $Y$ is normal, 

\item $K_Y\sim_\Q K_U|_Y$,

\item $\Gamma_U$ does not contain $Y$,

\item support of $\Gamma_Y:=\Gamma_U|_Y$ coincides with the reduced exceptional divisor of $Y\to X$, 

\item and each non-zero coefficient of $\Gamma_Y$ is equal to $1-\frac{\epsilon}{2}$. 
\end{itemize}

Now let $(X,B)$ be as in the lemma. We can assume $X$ is isomorphic to the fibre of $V\to T$ over some closed point $t$.
Let $Y$ be the fibre of $U\to T$ over $t$ and let $\phi\colon Y\to X$ be the induced birational morphism. 
We show that $\phi$ is a small morphism. Let 
$K_Y+B_Y=\phi^*(K_X+B)$. Then 
$$
B_Y-\Gamma_Y=(K_Y+B_Y)-(K_Y+\Gamma_Y)\equiv -(K_Y+\Gamma_Y)/X
$$ 
is anti-nef over $X$. Moreover, $\phi_*(B_Y-\Gamma_Y)=B\ge 0$. Thus by the negativity lemma, 
$B_Y- \Gamma_Y\ge 0$. However, $(X,B)$ is $\epsilon$-lc, so each coefficient of $B_Y$ is $\le 1-\epsilon$ 
but by the previous paragraph each non-zero coefficient of $\Gamma_Y$ is $\ge 1-\frac{\epsilon}{2}$. 
This is possible only if $\Gamma_Y=0$. Therefore, 
$\phi$ is a small morphism as $\Supp \Gamma_Y$ contains every divisor contracted by $\phi$. 
Moreover, since $U$ is $\Q$-factorial, $K_Y\sim_\Q K_U|_Y$ is $\Q$-Cartier. 

Pick a divisor $A_U$ on $U$ which is very ample over $T$. Let $A_Y=A_U|_Y$. Then $A_Y$ is very ample on $Y$ 
and $A_Y^d\le l$ for some fixed natural number $l$. 
We can assume that $A_U-\psi^*A_V$ is ample over $T$ where $\psi$ is the morphism $U\to V$.
This ensures that $A_Y-\phi^*A$ is ample. 

\end{proof}

\subsection{Semi-ample divisors}

\begin{lem}\label{l-semi-ample-div}
Assume $Y\to X$ is a contraction of normal projective varieties, $C$ is a nef $\R$-divisor on 
$Y$ and $A$ is the pullback of an ample $\R$-divisor on $X$. If $C$ is semi-ample over $X$, then 
$C+aA$ is semi-ample (globally) for any real number $a>0$.
\end{lem}
\begin{proof}
Since $C$ is semi-ample over $X$, it defines a contraction $\phi \colon Y\to Z/X$ to a normal projective variety. Replacing $Y$ 
with $Z$ and replacing $C,A$ with $\phi_*C,\phi_*A$, respectively, we can assume $C$ is ample over $X$. Pick $a>0$. 
Now $C+bA$ is ample for some $b\gg a$ because 
$A$ is the pullback of an ample divisor on $X$. Since $C$ is globally nef, 
$$
C+tbA=(1-t)C+t(C+bA)
$$ 
is ample for any $t\in (0,1]$. In particular, taking $t=\frac{a}{b}$ we see that $C+aA$ is ample.

\end{proof}

%%%%%%%%%%%%%%%%%%%

\section{\bf Lc thresholds of anti-log canonical systems of Fano pairs}

In this section we study lc thresholds of the $\R$-linear systems $|-(K_X+B)|_\R$ for (weak) log Fano pairs $(X,B)$. 
As pointed out in the introduction, understanding such thresholds is key to the proof of Theorem \ref{t-BAB}. 
We will show that Theorem \ref{t-bnd-lct} implies Theorem \ref{t-bnd-lct-global} assuming 
Theorem \ref{t-BAB} in lower dimension. We also prove that Theorem \ref{t-bnd-lct} implies 
Theorem \ref{t-global-lct-attained}. First we consider a special case of Theorem \ref{t-bnd-lct-global}.

\begin{prop}\label{p-bnd-lct-global-weak}
Let $d$ be a natural number and $\epsilon$ be a positive real number. 
Assume  that Theorem \ref{t-bnd-lct} holds  in dimension $\le d$ and that Theorem \ref{t-BAB}
holds in dimension $\le d-1$. Then there is a 
positive real number $v$ depending only on $d,\epsilon$ satisfying the following. 
Assume that 

$\bullet$ $X$ is a $\Q$-factorial $\epsilon$-lc Fano variety of dimension $d$,

$\bullet$ $X$ has Picard number one, and

$\bullet$ $0\le L\sim_\R -K_X$.\\\\
Then each coefficient of $L$ is less than or equal to $v$. 
\end{prop}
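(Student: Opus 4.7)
The plan is a proof by contradiction. Suppose no such $v$ exists, and pass to a sequence $X_n$ satisfying the hypotheses with $L_n\sim_\R -K_{X_n}$ and a component $S_n$ of coefficient $s_n\to\infty$. Two preliminary boundedness facts from earlier results will be freely used throughout. By Theorem \ref{t-BAB-to-bnd-vol}, which we may invoke because we assume Theorem \ref{t-BAB} in dimension $d-1$, the volumes satisfy $\vol(-K_{X_n})\le v_0$ for some $v_0=v_0(d,\epsilon)$. By Theorem \ref{t-eff-bir-e-lc}, there is a uniform $m=m(d,\epsilon)$ such that $-mK_{X_n}$ is Cartier and $|-mK_{X_n}|$ defines a birational map; in particular $(-mK_{X_n})^d\le m^d v_0$ is also bounded.

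The $\Q$-factoriality and Picard-number-one hypotheses reduce the problem to a slope estimate. Every prime divisor on $X_n$ is numerically proportional to $-K_{X_n}$, so writing each component as $L_{n,i}\equiv\alpha_{n,i}(-K_{X_n})$ with $\alpha_{n,i}>0$ rational and intersecting $L_n\equiv -K_{X_n}$ with $(-K_{X_n})^{d-1}$ yields $\sum_i l_{n,i}\alpha_{n,i}=1$. In particular $s_n\le 1/\alpha_n$ where $\alpha_n$ is the slope of $S_n$, so the assumption $s_n\to\infty$ forces $\alpha_n\to 0$. It therefore suffices to produce a uniform positive lower bound, depending only on $d,\epsilon$, on the slope of any prime divisor on such an $X$.

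The intended route is Theorem \ref{t-bnd-lct} in dimension $d$: a uniform lower bound $\lct(X,0,|L|_\R)\ge t$ would force each coefficient of any $L\in|-K_X|_\R$ to be at most $1/t$, giving the required slope bound. To apply Theorem \ref{t-bnd-lct} one needs a very ample Cartier $A$ on $X$ with $A^d$ bounded, $A-B$ ample (here with $B=0$), and $|A-L|_\R\neq\emptyset$. The natural candidate $A=-\ell K_X$ has bounded $A^d$ and obviously satisfies the other conditions, but its very-ampleness for a uniform $\ell$ is not a priori available, since this would essentially be Theorem \ref{t-BAB} itself. The plan is to pass to a bounded birational model supplied by Theorem \ref{t-BAB-to-bnd-vol}: a bounded projective pair $(X',H')$ with $H'$ very ample of bounded degree and a birational map $X\bir X'$; one then transfers $L_n$ to $X'$ through a common $\Q$-factorial dlt modification and applies Theorem \ref{t-bnd-lct} on the bounded side, pulling the coefficient/slope bound back to $X_n$ via the identification of strict transforms of non-contracted components.

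The main obstacle lies in carrying out this birational transfer while retaining enough singularity control. The bounded model $(X',H')$ from birational boundedness is not automatically $\epsilon'$-lc for any uniform $\epsilon'>0$, so one must build a suitable $\Q$-factorial $\epsilon'$-lc modification compatible with the hypotheses of Theorem \ref{t-bnd-lct}, and any components of $L_n$ contracted by the birational map must be bounded separately using the $\epsilon$-lc hypothesis on $X_n$. The Picard-number-one assumption is crucial throughout: the effective cone on $X_n$ is one-dimensional, so slopes transfer coherently under the birational map and the single-ray structure prevents pathological loss of control in the transfer.
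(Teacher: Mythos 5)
Your high-level strategy — reduce to a slope/coefficient bound using the Picard-number-one hypothesis, pass to a bounded birational model coming from Theorems \ref{t-eff-bir-e-lc} and \ref{t-BAB-to-bnd-vol}, and apply Theorem \ref{t-bnd-lct} on that model — is indeed the direction the paper takes. But the two steps you flag as ``obstacles'' are not details to be filled in later; they are the entire content of the proof, and your proposal contains no argument for either. First, to apply Theorem \ref{t-bnd-lct} on the bounded model $V$ you need a boundary $\Delta$ on $V$ with three simultaneous properties: $(V,\Delta)$ is $\epsilon'$-lc for a uniform $\epsilon'>0$, $H-\Delta$ is ample for a bounded very ample $H$, and — crucially, see below — $a(T,V,\Delta)\le 1$ for the relevant divisor $T$. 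The paper manufactures such a $\Delta$ by taking an $n$-complement $K_X+\Omega$ (Theorem \ref{t-bnd-compl}, which you never invoke), pushing $\Omega$ and a boundary $B\sim_\R-K_X$ to $V$, proving via degree estimates against $H$ that the coefficients of these (a priori non-effective) pushforwards are bounded below, and then taking a convex combination $\Delta=\alpha B_V+(1-\alpha)\Lambda$ with the bounded log smooth divisor $\Lambda$. Your phrase ``build a suitable $\Q$-factorial $\epsilon'$-lc modification'' gestures at this but offers no construction, and a dlt modification of the bounded model will not by itself produce uniform $\epsilon'$-lc singularities or the bounded-degree boundary that \ref{t-bnd-lct} requires.

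Second, your plan to pull the bound back ``via the identification of strict transforms of non-contracted components,'' with contracted components ``bounded separately using the $\epsilon$-lc hypothesis on $X_n$,'' misplaces the difficulty. If the component $T$ of $L$ survives on $V$ the bound is immediate from the degree bound on $\psi_*\phi^*L$; the essential case is precisely when $T$ is contracted over $V$, and the $\epsilon$-lc hypothesis on $X$ alone gives no bound there. What saves the argument in the paper is a different mechanism: since $T$ is a divisor \emph{on} $X$, one has $a(T,X,B)\le 1$, hence $a(T,V,\Delta)\le 1$ after the convex combination; and since $\rho(X)=1$ makes $T$ ample, the negativity lemma gives $\phi^*(uT)\le\psi^*M$ where $M=\psi_*\phi^*(uT)$, so $\mu_T\psi^*M\ge u$ and $(V,\Delta+\tfrac1u M)$ fails to be klt. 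Theorem \ref{t-bnd-lct} applied on $V$ (with $H-M$ ample, possible because $M$ is exceptional over $X$ hence supported in $\Lambda$) then forces $u\le 1/t$. Without this discrepancy-plus-negativity argument your transfer does not close. (Minor point: Theorem \ref{t-eff-bir-e-lc} does not assert that $-mK_X$ is Cartier, only that $|-mK_X|$ defines a birational map.)
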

\begin{proof}
\emph{Step 1.}
In this step we do some preparations.
Pick a component $T$ of $L$. Since $X$ has Picard number one, $L\sim_\R uT$ for some $u\ge \mu_TL$.  
Thus we may replace $L$, hence assume $L=u T$. 
We need to show $u$ is bounded from above. 
By Theorem \ref{t-bnd-compl}, there is a natural number $n$ depending only on $d$ such that 
$K_X$ has an $n$-complement $K_X+\Omega$. 
Moreover, by Theorems \ref{t-eff-bir-e-lc} and \ref{t-BAB-to-bnd-vol} in dimension $d$, and 
\ref{t-BAB} in lower dimension, 
replacing $n$ depending only on $d,\epsilon$, we can assume $|-nK_X|$ defines a birational map 
and that $\vol(-K_X)$ is bounded from above.\\

\emph{Step 2.}
In this step we show that $(X,\Omega)$ is log birationally bounded. Indeed 
applying [\ref{B-compl}, Proposition 4.4] (by taking $B=0$ and $M=n\Omega$) we deduce that 
there exist a number $c \in \R^{>0}$ and a 
bounded set of couples $\mathcal{P}$ depending only on $d,\epsilon$  satisfying the following: 
there is a projective log smooth couple $(V,\Lambda)\in \mathcal{P}$ 
and a birational map $V\bir X$ such that 
\begin{itemize}
\item  $\Supp \Lambda$ contains the exceptional 
divisor of  $V\bir X$ and the birational transform of $\Supp \Omega$;

\item  for a common resolution $\phi\colon W\to X$ and $\psi\colon W\to V$, 
each coefficient of $\psi_*\phi^*\Omega$ is at most $c$.

\end{itemize}

Enlarging $\Lambda$ we can also assume that $H\le \Lambda$ for some very ample divisor $H\ge 0$.\\

\emph{Step 3.}
Let $B$ be a boundary such that $(X,B)$ is $\epsilon$-lc and $K_X+B\sim_\R 0$. Let 
$$
K_V+B_V=\psi_*\phi^*(K_X+B) ~~\mbox{and}~~ K_V+\Omega_V=\psi_*\phi^*(K_X+\Omega).
$$
Then  $(V,B_V)$ is sub-$\epsilon$-lc and 
$$
a(T,V,B_V)=a(T,X,B)\le 1.
$$
Similarly,  
$(V,\Omega_V)$ is sub-lc and 
$$
a(T,V,\Omega_V)=a(T,X,\Omega)\le 1.
$$ 
By Step 2,  the union of $\Supp \Omega_V$ and the exceptional divisors of $V\bir X$ 
is contained in $\Supp \Lambda$, hence $\Omega_V\le \Lambda$ 
which implies 
$$
a(T,V,\Lambda)\le a(T,V,\Omega_V)\le 1. 
$$\

\emph{Step 4.}
In this step we show that the coefficients of $B_V$ are bounded from below.
Assume $D$ is a component of $B_V$ with negative coefficient. 
Let $K_V+\Gamma_V=\psi_*\phi^*K_X$. Then $\Gamma_V+\psi_*\phi^*B=B_V$, hence 
$\mu_D\Gamma_V\le \mu_DB_V$, so it is enough to bound $\mu_D\Gamma_V$ from below. 
Now $K_V+\Gamma_V\equiv -\psi_*\phi^*\Omega$, hence $\deg_H(K_V+\Gamma_V)$ 
is bounded from below, by Step 2. Thus $\deg_H\Gamma_V$ is bounded from below 
because $\deg_HK_V$ belongs to a fixed finite set as 
$(V,\Lambda)\in\mathcal{P}$ and $H\le \Lambda$.

Write  $\Gamma_V=I-J$ where $I,J$ are effective divisors with no common components. 
Since $I\le \Lambda$, $\deg_HI\le \deg_H\Lambda$ which shows $\deg_HI$ is bounded from 
above, hence $\deg_HJ$ is bounded from above too. Therefore, the coefficients of $J$ are 
bounded from above which in turn shows the coefficient of $D$ in $\Gamma_V$ is bounded from below as required. 
Note that this also implies the coefficients of $\Omega_V$ are bounded from below.\\ 

\emph{Step 5.}
In this step we introduce a boundary $\Delta$.
By the previous step, there exists $\alpha\in (0,1)$ depending only on $d,\epsilon$ such that 
$$
\Delta:=\alpha B_V+(1-\alpha)\Lambda\ge 0.
$$ 
Then, by Lemma \ref{l-average-boundary}, $(V,\Delta)$ is $\epsilon'$-lc where 
$\epsilon'=\alpha \epsilon$ because $(V,B_V)$ is sub-$\epsilon$-lc and $(V,\Lambda)$ is lc. 
Moreover, by Step 3,
$$
a(T,V,\Delta)=\alpha a(T,V,B_V)+(1-\alpha) a(T,V,\Lambda)
$$
$$
\le \alpha +(1-\alpha)=1.
$$

On the other hand, there is a bounded number $l\in\N$ such that $lH- \Lambda$ is ample. Moreover,  
by construction, $-B_V\sim_\R K_V$, hence we can assume  $lH-B_V\sim_\R lH+K_V$ is ample as well. 
This in turn implies 
$$
lH-\Delta=\alpha (lH-B_V)+(1-\alpha)(lH-\Lambda)
$$ 
is ample too. In addition, there is a bounded natural number $r$ 
such that $(lH)^d\le r$.\\

\emph{Step 6.}
In this step we finish the proof by applying Theorem \ref{t-bnd-lct}.
Let $M=\psi_*\phi^*uT$.
Since $\Omega\equiv -K_X \equiv uT$, the degree $\deg_HM=\deg_H(\psi_*\phi^*\Omega)$ is bounded from above, 
by Step 2, which implies the coefficients of $M$ are bounded from above. 
In particular, we may assume $T$ is exceptional over $V$, otherwise $u$ would be bounded. 
Thus $M$ is exceptional over $X$, hence its support is inside $\Lambda$. So perhaps after replacing 
$l$ we can assume $lH-M$ is ample.

On the other hand, since $T$ is ample, $\phi^*uT\le \psi^*M$, by the negativity lemma, hence 
the coefficient of the birational transform of $T$ in $\psi^*M$ is at least $u$.  
Therefore, $(V,\Delta+\frac{1}{u}M)$ is not klt as 
$$
a(T,V,\Delta+\frac{1}{u}M)\le a(T,V,\Delta)-1\le 0.
$$
Now by Theorem \ref{t-bnd-lct}, there is a positive number $t$ depending only on  
$d,\epsilon',r$ such that $(V,\Delta+tM)$ is klt. 
Therefore,  $t<\frac{1}{u}$, hence $u<v:=\frac{1}{t}$.

\end{proof}

\begin{lem}\label{l-local-lct-bab-to-global-lct}
Assume  that Theorem \ref{t-bnd-lct} holds in dimension $\le d$ and that Theorem \ref{t-BAB} 
holds in dimension $\le d-1$. Then Theorem \ref{t-bnd-lct-global}
holds in dimension $d$. 
\end{lem}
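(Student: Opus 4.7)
The plan is to reduce the general weak-Fano setting of Theorem~\ref{t-bnd-lct-global} to the Picard-number-one situation of Proposition~\ref{p-bnd-lct-global-weak}, using a plt blowup to extract an lc place followed by a carefully chosen MMP, with Theorem~\ref{t-BAB} in lower dimension used to handle the case of higher-dimensional fibrations. Given $(X,B)$ and $0\le L\sim_\R A=-(K_X+B)$, set $s:=\lct(X,B,L)$; after a small $\Q$-factorialization (which preserves $\epsilon$-lc-ness and $|A|_\R$) we may assume $X$ is $\Q$-factorial. Applying Lemma~\ref{l-plt-blowup} to $(X,B+sL)$ yields a plt blowup $\phi\colon Y\to X$ extracting exactly one divisor $T$, with $(Y,T)$ plt, $-(K_Y+T)$ ample over $X$, and $T$ an lc place of $(X,B+sL)$. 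Writing $\phi^*(K_X+B)=K_Y+\phi^{-1}_*B+(1-a)T$ and $\phi^*L=L_Y+cT$, where $a=a(T,X,B)$ and $c=\mult_T\phi^*L$, the lc-place condition translates to $sc=a\ge\epsilon$, so a uniform upper bound $c\le v(d,\epsilon)$ will yield $s\ge\epsilon/v$.

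From $L\sim_\R A$ one obtains the key $\R$-linear equivalence
$$
-K_Y\sim_\R L_Y+\phi^{-1}_*B+(c+1-a)T,
$$
so $-K_Y$ is represented by an effective divisor whose coefficient at $T$ is essentially $c$. Moreover, $-(K_Y+T+\phi^{-1}_*B)=(1-s)\phi^*A+s\phi^{-1}_*L$ is nef and big (for $s<1$), so $Y$ is of Fano type. I then run an MMP on $Y$ designed so that $T$ is preserved; for instance, a $(K_Y+\phi^{-1}_*B)$-MMP works when $a\le 1$, since then $\phi$-exceptional curves $C$ satisfy $(K_Y+\phi^{-1}_*B)\cdot C=-(1-a)T\cdot C\ge 0$ as $T\cdot C<0$, so such curves are not contracted. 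Since $K_Y+\phi^{-1}_*B$ is not pseudo-effective (its negative is big), the MMP terminates with a Mori fibre space $Y'\to Z$ on which the strict transform $T'$ of $T$ is a non-zero divisor.

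If $\dim Z=0$, then $Y'$ has Picard number one and $-K_{Y'}$ is big; one verifies $(Y',0)$ is $\epsilon'$-lc for some $\epsilon'=\epsilon'(d,\epsilon)>0$ using the interplay of log discrepancies under the plt blowup and MMP. Pushing forward the $\R$-linear equivalence above, Proposition~\ref{p-bnd-lct-global-weak} applied to $Y'$ bounds the coefficient of $T'$ in the resulting effective representative of $-K_{Y'}$ by some $v$, giving $c+1-a\le v$. Combined with $a=sc$ and the trivial bound $s\le \tfrac12$ (otherwise we are done with $t=\tfrac12$), this gives $c\le 2(v-1)$ and hence $s\ge\epsilon/(2(v-1))$. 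If $\dim Z\ge 1$, a general fibre $F$ of $Y'\to Z$ is an $\epsilon'$-lc Fano of dimension $<d$ which, by Theorem~\ref{t-BAB} in dimension $\le d-1$, lies in a bounded family; then Theorem~\ref{t-bnd-lct} in dimension $<d$ (which is assumed) applied to $F$ together with the restrictions of the pushed-forward divisors yields the required bound on $c$.

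The main technical obstacle is controlling the MMP and $\epsilon$-lc-ness throughout. When $a=a(T,X,B)>1$ the intersection-number calculation above fails, and $(Y,\phi^{-1}_*B)$ may not even be $\epsilon$-lc, so the $(K_Y+\phi^{-1}_*B)$-MMP could contract $T$; handling this case requires either choosing the lc place $T$ so that $a\le 1$ (for example by first dealing separately with the case where $(X,B+sL)$ admits an lc place that is a divisor on $X$) or running a more delicate MMP with a perturbed boundary $\phi^{-1}_*B+\mu T$ for appropriate $\mu\in(0,1)$, and then verifying that the coefficient of $T'$ in the pushed-forward effective representative of $-K_{Y'}$ is still essentially $c$.
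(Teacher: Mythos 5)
Your overall strategy (extract a divisor computing the threshold, pass to a Mori fibre space, then use Proposition \ref{p-bnd-lct-global-weak} when the Picard number is one and lower-dimensional boundedness otherwise) is the same as the paper's, but two steps have genuine gaps. First, the MMP you run on $K_Y+\phi^{-1}_*B$ does not do what you need. The computation $(K_Y+\phi^{-1}_*B)\cdot C=-(1-a)T\cdot C\ge 0$ only concerns $\phi$-exceptional curves, whereas the MMP is global: at a later stage an extremal ray generated by curves inside the birational transform of $T$ that are not $\phi$-exceptional could well contract $T$, and nothing in your argument excludes this. Worse, even if $T$ survives, in the case $\dim Z\ge 1$ you need $T'$ to be horizontal over $Z$ --- otherwise restricting to a general fibre gives no information about $\mu_{T'}$ and hence no bound on $c$ --- and a $(K_Y+\phi^{-1}_*B)$-MMP gives no control on the position of $T'$ relative to the fibration; your treatment of this case is one sentence and does not address it. This is precisely why the paper runs the MMP on $-T$: such an MMP can never contract $T$, and on the resulting Mori fibre space $T'$ is ample over the base, hence horizontal, which is what makes the restriction-to-a-general-fibre argument (using Theorem \ref{t-bnd-lct-global} in lower dimension, or Theorem \ref{t-BAB}) produce the upper bound on the coefficient of $T'$.

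Second, the case $a=a(T,X,B)>1$, which you flag but do not resolve, is a real problem for your set-up: then the crepant pullback of $B$ has negative coefficient along $T$, so $(Y,\phi^{-1}_*B+(1-a)T)$ is not a pair, and both your intersection computation and the transfer of $\epsilon$-lc-ness to $Y'$ break down; the remedies you sketch (choosing a better lc place, perturbing the boundary) are not carried out. The paper avoids both issues simultaneously by not working with the lc threshold at all: it fixes $\epsilon'\in(0,\epsilon)$, takes $s$ maximal with $(X,B+sL)$ $\epsilon'$-lc, and extracts a divisor $T$ with $a(T,X,B+sL)=\epsilon'$; then the crepant pullback $(Y,B_Y+sL_Y)$ is automatically an effective klt weak log Fano pair whatever the size of $a(T,X,B)$, the inequality $\mu_T sL_Y\ge\epsilon-\epsilon'$ replaces your identity $a=sc$, and a lower bound on this $s$ bounds the lc threshold a fortiori. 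A smaller point: your justification that $Y$ is Fano type is off as stated, since $sL_Y+(1-s)\phi^*A$ is big but need not be nef and $(Y,T+\phi^{-1}_*B)$ is not klt; one needs a klt perturbation (or the paper's klt crepant pullback) to conclude.
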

\begin{proof}
Pick $\epsilon'\in (0,\epsilon)$.
Let $(X,B)$ and $A=-(K_X+B)$ be as in Theorem \ref{t-bnd-lct-global} in dimension $d$ and pick $L\in|A|_\R$. 
Let $s$ be the largest number such that $(X,B+sL)$ is $\epsilon'$-lc.
It is enough to show $s$ is bounded from below away from zero. In particular, we may assume $s<1$.
Replacing $X$ with a $\Q$-factorialisation, we can assume $X$ is $\Q$-factorial. 
There is a prime divisor $T$ over $X$, that is, on birational models of $X$, with log discrepancy 
$$
a(T,X,B+sL)=\epsilon'.
$$
If $T$ is not exceptional over $X$, then  we let $\phi\colon Y\to X$ be the identity morphism. 
But if $T$ is exceptional over $X$, 
then we let $\phi\colon Y\to X$ be the extremal birational contraction which extracts $T$. 
Let $K_{Y}+B_Y=\phi^*(K_{X}+B)$ and let $L_Y=\phi^*L$. By assumption, 
$$
\mu_{T}B_Y\le 1-\epsilon ~~~\mbox{but}~~~ \mu_{T}(B_Y+sL_Y)=1-\epsilon',
$$
 hence 
$\mu_{T}sL_Y\ge \epsilon-\epsilon'$. 

Since $s<1$, 
$$
-(K_X+B+sL)= A-sL\sim_\R (1-s)A
$$ 
is nef and big, hence $-(K_Y+B_Y+sL_Y)$ is also nef and big. 
Thus $(Y,B_Y+sL_Y)$ is klt weak log Fano, so $Y$ is of Fano type. Run an MMP on $-T$ and let $Y'\to Z'$ 
be the resulting Mori fibre space. Then 
$$
-(K_{Y'}+B_{Y'}+sL_{Y'})\sim_\R (1-s)L_{Y'}\ge 0.
$$
Moreover, $({Y'},B_{Y'}+sL_{Y'})$ is $\epsilon'$-lc because $(Y,B_Y+sL_Y)$ is $\epsilon'$-lc and 
$-(K_{Y}+B_{Y}+sL_{Y})$ is semi-ample.
 If $\dim Z'>0$, then restricting to a general fibre of $Y'\to Z'$ and applying 
Theorem \ref{t-bnd-lct-global} in lower dimension by induction (or applying Theorem \ref{t-BAB}) 
shows that the coefficients of the horizontal$/Z'$ components 
of $(1-s)L_{Y'}$ are bounded from above. In particular, $\mu_{T'}(1-s)L_{Y'}$ is bounded from above. 
Thus from the inequality 
$$
\mu_{T'}(1-s)L_{Y'}\ge \frac{(1-s)(\epsilon-\epsilon')}{s},
$$ 
we deduce that $s$ is bounded from below away from zero. 
Therefore, we can assume $Z'$ is a point and that $Y'$ is a Fano variety with Picard number one. 
Now 
$$
-K_{Y'}\sim_\R L_{Y'}+B_{Y'}=(1-s)L_{Y'}+sL_{Y'}+B_{Y'}\ge (1-s)L_{Y'},
$$ 
so by Proposition \ref{p-bnd-lct-global-weak}, $\mu_{T'}(1-s)L_{Y'}$ is bounded from above 
which again gives a lower bound for $s$ as before.

\end{proof}

Next we treat Theorem \ref{t-global-lct-attained}. As mentioned in the introduction, the theorem and 
its proof are independent of the rest of this paper when $\lct(X,B,|A|_\R)<1$.

\begin{lem}\label{l-t-global-lct-attained=1}
Assume Theorem \ref{t-bnd-lct} holds in dimension $d$.
Then Theorem \ref{t-global-lct-attained} holds in dimension $d$ when $\lct(X,B,|A|_\R)=1$.
\end{lem}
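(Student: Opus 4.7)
The plan is to realise $D$ as a limit of an approximating sequence, with the main technical work being the stabilisation of the associated lc places, which is where Theorem \ref{t-bnd-lct} enters.

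First, I would observe that $\lct(X, B, |A|_\R) = 1$ together with the closedness of the lc condition forces $\lct(X, B, L) \ge 1$ for every $L \in |A|_\R$. I would then choose $L_i \in |A|_\R$ with $s_i := \lct(X, B, L_i)$ decreasing to $1$, and set $D_i := s_i L_i \sim_\R s_i A$, so that $(X, B + D_i)$ is lc but not klt, with an lc place $T_i$ (a prime divisor over $X$). After replacing $X$ by a small $\Q$-factorial modification I may assume $X$ is $\Q$-factorial; since $(X,B)$ is klt with $-(K_X+B)$ big, $X$ is of Fano type, so numerical equivalence and $\R$-linear equivalence coincide for $\R$-divisors. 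Fixing an ample divisor $H$ on $X$, the bounded $H$-degrees $\deg_H D_i = s_i \deg_H A$ allow me to extract, by compactness of the space of effective $\R$-cycles with bounded degree, a subsequence with $D_i \to D$, producing an effective $\R$-divisor $D$ with $[D] = [A]$ and hence $D \sim_\R A$. For any prime divisor $T$ over $X$, the coefficient $b_T(\cdot)$ is a linear functional continuous along the convergent sequence, so $a(T, X, B + D_i) \to a(T, X, B + D)$.

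The key step is to stabilise the lc places: I want to pass to a further subsequence so that $T_i = T$ is a single fixed prime divisor over $X$. To achieve this I would argue along the lines of Proposition \ref{p-bnd-lct-global-weak} and Lemma \ref{l-local-lct-bab-to-global-lct}: for each $i$ perform the extremal extraction $Y_i \to X$ of the lc place $T_i$, run an MMP on a suitable divisor on $Y_i$, and use Theorem \ref{t-bnd-lct} to confine the resulting pairs $(Y_i, T_i)$ to a bounded family; up to birational equivalence over $X$ this forces only finitely many distinct $T_i$ to appear. With $T_i = T$ fixed, the continuity of log discrepancies gives $a(T, X, B + D) = \lim a(T, X, B + D_i) = 0$, so $(X, B + D)$ is lc but not klt and $\lct(X, B, D) = 1$. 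When $B$ is a $\Q$-boundary, a standard rationality argument---replacing $D$ by a $\Q$-rational point of the rational polytope consisting of those $L \in |A|_\R$ with $(X, B+L)$ lc and $a(T, X, B+L) = 0$---yields $D \sim_\Q A$ with the same property.

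The main obstacle is the stabilisation of the $T_i$. Without it one has a limit $D \sim_\R A$ with $(X, B + D)$ lc, but the invariant $\min_T a(T, X, B + D)$ is only upper, not lower, semi-continuous in $D$, so non-klt-ness does not pass automatically to the limit. Theorem \ref{t-bnd-lct} is exactly the tool that supplies the uniform boundedness needed to force the $T_i$ into a finite set up to birational equivalence, analogously to its role in deducing Theorem \ref{t-bnd-lct-global} from Theorem \ref{t-bnd-lct} via Lemma \ref{l-local-lct-bab-to-global-lct}.
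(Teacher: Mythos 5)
Your limiting strategy has two genuine gaps, and both occur at the steps you yourself flag as the technical core. First, the compactness step is not justified: there is no topology in which the set of effective $\R$-divisors of bounded $H$-degree is compact \emph{and} the limit is again an $\R$-divisor. With real coefficients that are not bounded away from zero, a sequence of divisors of fixed degree can diffuse (already on $\PP^1$, averages of $i$ points with coefficients $1/i$ converge weakly to a measure with no atoms), so the weak limit of the $D_i$ need not be a divisor at all, and the desired $D\sim_\R A$ is not produced. Second, and more seriously, the stabilisation of the lc places $T_i$ is asserted but not proved, and the proposed mechanism does not give it: even if the extracted pairs $(Y_i,T_i)$ lie in a bounded family, a bounded family contains infinitely many divisors, so this does not force the $T_i$ to take finitely many values as divisorial valuations over the \emph{fixed} $X$ (for instance all $T_i$ could be pairwise distinct prime divisors on $X$ itself of bounded degree; ``finitely many up to birational equivalence over $X$'' is likewise not the statement you need). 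Theorem \ref{t-bnd-lct} is a lower bound for lc thresholds; it is not a finiteness statement for lc places, and no argument is given that converts it into one. Since you correctly observe that non-klt-ness does not pass to a limit without such stabilisation, the proof as written does not close.

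For comparison, the paper's argument avoids limits entirely via a dichotomy on exceptionality. If $(X,B)$ is not exceptional, there is some $0\le L\sim_\R A$ with $(X,B+L)$ not klt, so $\lct(X,B,L)\le 1$; combined with $\lct(X,B,|A|_\R)=1$ this forces $\lct(X,B,L)=1$ and one takes $D=L$ (with a Diophantine approximation to get $D\sim_\Q A$ when $B$ is a $\Q$-boundary). If $(X,B)$ is exceptional, then by [\ref{B-compl}, Lemma 7.3] there is a uniform $\epsilon>0$ with $(X,B+L_i)$ $\epsilon$-lc for all $i$, and Theorem \ref{t-bnd-lct} applied to the pairs $(X,B+L_i)$ with $M=L_i$ gives a uniform $s>0$ such that $(X,B+L_i+sL_i)$ is klt; hence $t_i\ge 1+s$ for all $i$, contradicting $t_i\to 1$. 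Note that your proposal never invokes the uniform $\epsilon$-lc input of [\ref{B-compl}, Lemma 7.3], which is exactly what makes Theorem \ref{t-bnd-lct} applicable with a single constant for all $i$; if you want to salvage your approach you would need either this dichotomy or a genuine finiteness statement for the valuations $T_i$, neither of which is currently in your argument.
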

\begin{proof}
By definition of the lc threshold, there exists a sequence of $\R$-divisors 
$$
0\le L_i\sim_\R A=-(K_X+B)
$$ 
such that the numbers $t_i:=\lct(X,B,L_i)$ form a decreasing sequence with $\lim t_i=1$. 

Assume $(X,B)$ is not exceptional, that is, assume there is $0\le L\sim_\R A$ such that $(X,B+L)$ is not 
klt. Thus $\lct(X,B,L)\le 1$. Since  $\lct(X,B,|A|_\R)=1$, we deduce $\lct(X,B,L)=1$ and that 
$(X,B+L)$ is lc but not klt. If $B$ is a $\Q$-boundary, 
then using approximation, we can replace $L$ so that $0\le L\sim_\Q A$, so we are done in this case by 
taking $D=L$.

Now assume $(X,B)$ is exceptional,  that is, for any $0\le L\sim_\R A$ the pair $(X,B+L)$ is klt. 
We will derive a contradiction. 
By [\ref{B-compl}, Lemma 7.2], there is $\epsilon>0$ such that $(X,B+L_i)$ 
is $\epsilon$-lc for every $i$. Then by Theorem \ref{t-bnd-lct} in dimension $d$, there is 
$s>0$ such that $(X,B+L_i+sL_i)$ is klt, for every $i$. But then $1+s<t_i$ for every $i$ which contradicts 
$\lim t_i=1$.

\end{proof}

\begin{prop}\label{p-t-global-lct-attained<1}
Theorem \ref{t-global-lct-attained} holds when $\lct(X,B,|A|_\R)<1$.
\end{prop}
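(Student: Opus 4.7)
The plan is a lower-semicontinuity-plus-compactness argument on a minimizing sequence. Set $t:=\lct(X,B,|A|_\R)<1$ and choose effective divisors $0\le L_i\sim_\R A$ with $t_i:=\lct(X,B,L_i)$ strictly decreasing to $t$; we may arrange $t_i<1$ for every $i$. For each $i$ pick a prime divisor $E_i$ over $X$ computing $t_i$, so that $a(E_i,X,B+t_iL_i)=0$. Using BCHM, extract $E_i$ as the exceptional divisor of a plt blowup $\phi_i\colon Y_i\to X$ (after a $\Q$-factorialisation of $X$ if needed). Writing
$$
K_{Y_i}+B_{Y_i}+t_iL_{i,Y_i}=\phi_i^*(K_X+B+t_iL_i),
$$
the klt hypothesis on $(X,B)$ yields $b_i:=a(E_i,X,B)>0$, and hence $\mu_{E_i}\phi_i^*L_i=b_i/t_i$, which is uniformly bounded since $b_i\le 1$ and $t_i\ge t>0$.

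The decisive step is to stabilise the computing places: pass to a subsequence in which all the $E_i$ represent a single divisor $E$ over $X$. The hypothesis $t<1$ is essential here, providing a strict gap that allows ACC for log canonical thresholds together with Noetherianity of centers on $X$ (and, if required, the boundedness of plt extractions of a fixed klt pair with bounded log discrepancy) to force the $E_i$ eventually to coincide with a fixed $E$. Once this is done, one has $\mu_E\phi^*L_i=b/t_i\to b/t$, where $b:=a(E,X,B)$, so the non-klt condition reduces to a single fixed linear constraint on the $L_i$.

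To pass to the limit, I would exploit bigness of $A$: by Kodaira's lemma write $A\sim_\R H+F$ with $H$ ample and $F\ge 0$, so that after fixing a log resolution large enough to absorb the ambiguity inherent in $\sim_\R$, the sequence $L_i$ is confined to a bounded convex subset of the effective $\R$-divisors on that resolution. A weak limit produces $D\in|A|_\R$ effective with $\mu_E\phi^*D\ge b/t$, whence $a(E,X,B+tD)\le 0$ and $\lct(X,B,D)\le t$; the reverse inequality is immediate from the definition of $\lct(X,B,|A|_\R)$, giving equality. For a $\Q$-boundary $B$, the divisor $A$ is $\Q$-Cartier and $|A|_\Q$ is dense in $|A|_\R$; a small $\Q$-perturbation of $D$ inside $|A|_\Q$ preserves the inequality $\mu_E\phi^*D'>b/t-\delta$ for small $\delta>0$, which still forces $\lct(X,B,D')\le t$, proving rationality of the threshold.

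The main obstacle is the simultaneous stabilisation of the computing places $E_i$ and the extraction of a convergent subsequence of $L_i$, both of which must cope with the infinite-dimensional gauge freedom in $\sim_\R$. My proposed fallback is to run an MMP on $-E_i$ on each $Y_i$ over $X$ to reach a Fano-type fibration, and then either argue by induction on dimension after restricting to a general fibre, or use the resulting bounded family of $-E_i$-MMP outputs to constrain the birational class of $E_i$ to a finite set.
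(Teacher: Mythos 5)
Your argument has two genuine gaps, and both occur at its load-bearing steps. First, the stabilisation of the computing places $E_i$ does not work. Your bound $b_i=a(E_i,X,B)\le 1$ is unjustified and false in general: already for $X=\PP^2$, $B=0$ and $L$ a cuspidal cubic, the unique lc place of $(X,\tfrac{5}{6}L)$ is the exceptional divisor of the $(2,3)$-weighted blowup of the cusp, with $a(E,X,0)=5$; making the members $L_i$ more and more singular produces computing places of unbounded log discrepancy over $(X,B)$, so they do not lie in any finite set. ACC for lc thresholds constrains the numbers $t_i$ (which here already converge), not the valuations computing them, and Noetherianity of centres is no help since the centres may be infinitely many distinct points, or a fixed centre carried by infinitely many distinct divisors. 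Second, the compactness step fails: $|A|_\R$ is not confined to any finite-dimensional space, the supports of the $L_i$ vary, so no single log resolution "absorbs" all of them, and a weak limit of the $L_i$ (say as currents) need not be an effective $\R$-divisor $\R$-linearly equivalent to $A$, so the limit object $D$ you need is not produced. Finally, in the $\Q$-boundary case a perturbation giving only $\mu_E\phi^*D'>b/t-\delta$ yields $\lct(X,B,D')$ close to $t$ but not $\le t$, so attainment (the whole point) is lost.

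The paper's proof avoids limits entirely, and this is the idea you are missing: fix any single index $i$, let $T_i'$ be an lc place of $(X,B+t_iL_i)$ extracted by an extremal contraction $X_i'\to X$, add a general $H\sim_\R A$ so that $K_X+B+t_iL_i+(1-t_i)H\sim_\R 0$, and run an MMP on $-(K_{X_i'}+T_i'+B_i'+(1-t)H_i')$ — with the limiting coefficient $t$, not $t_i$ — on the Fano type variety $X_i'$. ACC for lc thresholds guarantees $(X_i'',T_i''+B_i''+(1-t)H_i'')$ is lc, the global ACC [HMX2, Theorem 1.5] rules out the Mori fibre space outcome, and semi-ampleness on the minimal model produces $P_i\ge 0$ on $X$ with $-(K_X+B+(1-t)H)\sim_\R P_i$ and $(X,B+P_i)$ not klt; then $D=\tfrac{1}{t}P_i\sim_\R A$ attains the threshold. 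Rationality in the $\Q$-boundary case is then obtained by Diophantine approximation inside the finite-dimensional rational affine space of divisors supported on $\Supp(H_i'+P_i')$, which preserves non-kltness exactly rather than approximately. If you want to salvage your scheme, you would have to replace both the stabilisation and the weak-limit steps by an argument of this kind; as written they have no proof and, in the first case, a false key inequality.
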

\begin{proof}
\emph{Step 1.}
From here to the end of Step 4 we find $D\in |A|_\R$
such that 
$$
\lct(X,B,|A|_\R)=\lct(X,B,D).
$$
In Step 5 we treat the case when $B$ is a $\Q$-boundary.
 In this step we make some preparations.
  
Replacing $X$ with a $\Q$-factorialisation, we can assume $X$ is $\Q$-factorial. 
By definition of the lc threshold, there exists a sequence of $\R$-divisors 
$$
0\le L_i\sim_\R A=-(K_X+B)
$$ 
such that the numbers $t_i:=\lct(X,B,L_i)\in(0,1)$ form a decreasing sequence with
$$
t:=\lct(X,B,|A|_\R)=\lim t_i.
$$
If $t=t_i$ for some $i$, then put $D=L_i$. 
So for now assume $t\neq t_i$ for every $i$.\\ 

\emph{Step 2.}
In this step we choose elements $H_i\in |A|_\R$ and define birational contractions $X_i'\to X$ 
with respect to some lc places of $(X,B+t_iL_i)$.
Pick $H_i\in |A|_\R$  so that   
$(X,B+H_i)$ is klt and $(X,B+t_iL_i+H_i)$ is lc, for any $i$, and such that the coefficients of 
the $H_i$ belong to a fixed DCC set. Existence of the DCC set follows from the fact that $A$ is semi-ample: 
indeed, $A\sim_\R \sum \alpha_j G_j$ 
where $G_j$ are base point free Cartier divisors and $\alpha_j>0$; so to find the $H_i$ we only need 
to move the $G_j$ appropriately. 
Note that 
$$
K_X+B+t_iL_i+(1-t_i)H_i\sim_\R 0.
$$

Let $T_i'$ be an lc place of $(X,B+t_iL_i)$. If $T_i'$ is not exceptional over $X$, we let 
$\phi_i\colon X_i'\to X$ to be the identity morphism, but if $T_i'$ is exceptional over $X$, 
we let $\phi_i\colon X_i'\to X$ be the extremal birational contraction 
which extracts $T_i'$. Let $B_i',L_i', H_i'$ be the birational transforms of $B,L_i,H_i$. 
Then $H_i'=\phi_i^*H_i$ because $H_i$ cannot contain any lc centre of $(X,B+t_iL_i)$. Moreover, 
$$
K_{X_i'}+T_i'+B_i'+t_iL_i'+(1-t_i)H_i'=\phi_i^*(K_X+B+t_iL_i+(1-t_i)H_i)\sim_\R 0. 
$$\ 

\emph{Step 3.}
In this step we run an MMP on 
$$
-(K_{X_i'}+T_i'+B_i'+(1-t)H_i')
$$ 
and study the outcome. Note that here we have used $t$ rather than $t_i$. 
By [\ref{B-compl}, 2.13(7)], $X_i'$ is of Fano type, so we can indeed run such an MMP.
Let $X_i''$ be the resulting model of the MMP. 
Then 
$$
({X_i''},T_i''+B_i''+t_iL_i''+(1-t_i)H_i'')
$$
is lc, and the coefficients of $T_i''+B_i''$ and $H_i''$ belong to a fixed DCC set independent of $i$. 
Since the numbers $1-t_i$ form an increasing sequence approaching $1-t$, 
 by the ACC for lc thresholds [\ref{HMX2}], we can assume 
$$
({X_i''},T_i''+B_i''+(1-t)H_i'')
$$
is lc, for every $i$. 

Now assume the MMP ends with a Mori fibre space $X_i''\to Z_i''$, for infinitely many $i$. Then 
for such $i$,
$$
K_{X_i''}+T_i''+B_i''+(1-t)H_i''
$$
is ample over $Z_i''$. On the other hand, 
$$
K_{X_i''}+T_i''+B_i''+t_iL_i''+(1-t_i)H_i''\sim_\R 0
$$
which implies 
$$
K_{X_i''}+T_i''+B_i''+(1-t_i)H_i''
$$
is anti-nef over $Z_i''$.  This contradicts  [\ref{HMX2}, Theorem 1.5] by restricting to 
the general fibres of $X_i''\to Z_i''$. Thus replacing the sequence with a subsequence, we can assume the MMP 
ends with a minimal model $X_i''$, for every $i$.\\

\emph{Step 4.}
In this step we find the desired $D\in |A|_\R$ such that 
$
\lct({X},B,D)=t.
$  
Fix $i$.
Since 
$$
-(K_{X_i''}+T_i''+B_i''+(1-t)H_i'')
$$
is nef, hence semi-ample, it is $\R$-linearly equivalent to some $\R$-divisor $P_i''\ge 0$. 
Since $X_i'\bir X_i''$ is an MMP on 
 $$
-(K_{X_i'}+T_i'+B_i'+(1-t)H_i'),
$$ 
we get an $\R$-divisor $P_i'\ge 0$ such that 
$$
-(K_{X_i'}+T_i'+B_i'+(1-t)H_i')\sim_\R P_i'
$$
which in turn gives an $\R$-divisor $P_i\ge 0$ such that 
$$
-(K_{X}+B+(1-t)H_i)\sim_\R P_i. 
$$
By construction, 
$$
({X},B+(1-t)H_i+P_i)
$$
is not klt near the generic point of the centre of $T_i'$ on $X$. 
Moreover, $P_i\sim_\R tA$, and $(X,B+P_i)$ is not klt as $\Supp H_i$ does not contain the centre of $T_i$. 
Put $D=\frac{1}{t}P_i$. Then $D\sim_\R A$ and $\lct(X,B,D)\le t$, so the inequality is actually an equality 
by definition of $t$.\\

\emph{Step 5.}
In this step we treat the case 
when $B$ is a $\Q$-boundary. Fix $i$. Since $A_i$ is a $\Q$-divisor, we can assume $H_i$ is a $\Q$-divisor.
If $t$ is a rational number, then in Step 4 we can take $P_i''$ to be a $\Q$-divisor, 
hence $P_i$ is also a $\Q$-divisor which 
in turn means we can choose $D$ to be a $\Q$-divisor. Assume now that $t$ is not a rational number. 
We will derive a contradiction. Assume 
$$
-(K_{X_i''}+T_i''+B_i''+(1-t)H_i'')
$$
is not big. Then, since it is semi-ample, it is numerically trivial on some covering family of curves, 
hence taking intersection 
with such curves $C$ and using the fact that $H_i''\cdot C\neq 0$ (as $H_i''$ is big) ensures that $t$ is a rational number, a contradiction. 
On the other hand, if the above divisor is big, then 
$$
-(K_{X_i'}+T_i'+B_i'+(1-t)H_i')
$$ 
is big and this in turn implies 
$$
-(K_{X_i'}+T_i'+B_i'+(1-e)H_i')
$$
is also big for some rational number $e\in (0,t)$. 
Thus as in Step 4 we can find $0\le Q_i\sim_\Q eA$ such that 
$(X,B+Q_i)$ is not klt. Letting $D=\frac{1}{e}Q_i$ gives $0\le D\sim_\Q A$ with 
$$
\lct(X,B,D)\le e<t,
$$ 
contradicting the definition of $t$.

\end{proof}

%%%%%%%%%%%%%%%%%%%

\section{\bf Complements in a neighbourhood of a non-klt centre}

In this section, we prove our main result on the existence of complements (Theorem \ref{t-compl-near-lcc}). 
It does not follow directly from [\ref{B-compl}] but the proofs in [\ref{B-compl}] 
work with appropriate modifications. First we treat a special case of the theorem.

\begin{prop}\label{l-bnd-comp-Gamma}
Theorem \ref{t-compl-near-lcc} holds under the additional assumption that there is a boundary $\Gamma$ such that
\begin{itemize}
\item  $(X,\Gamma)$ is plt with $S=\rddown{\Gamma}$, and 
 
\item $\alpha M-(K_X+\Gamma)$ is ample for some real number $\alpha>0$. 
\end{itemize}
\end{prop}
\begin{proof}
 
\emph{Step 1.} 
{In this step we reduce to the case when $\alpha\in (0,2)$ and when $B-\Gamma$ has 
small (positive or negative) coefficients.}
Since $M-(K_X+B)$ is nef and big and  $\alpha M-(K_X+\Gamma)$ is ample,
$$
(1-t+t\alpha)M-(K_X+(1-t)B+t\Gamma)=(1-t+t\alpha)M-(1-t)(K_X+B)-t(K_X+\Gamma)
$$
$$
=(1-t)(M-(K_X+B))+t(\alpha M-(K_X+\Gamma))
$$
is ample for any $t\in (0,1)$. Thus replacing $\Gamma$ 
with $(1-t)B+t\Gamma$ for some sufficiently small real number $t>0$, 
we can replace $\alpha$ by some rational number in $(0,2)$. Note that since $(X,B)$ is lc and 
since $S$ is a non-klt centre of this pair, we have $S\le \rddown{B}$ and 
the above change of $\Gamma$ preserves the plt property of $(X,\Gamma)$ and the condition $S=\rddown{\Gamma}$.

It is also clear that if we choose $t$ small enough then we can ensure that 
$B-\Gamma$ has sufficiently small positive or negative coefficients (we will use this in steps below).\\

\emph{Step 2.} 
{In this step we consider bounded complements on $S$.}
Since $(X,\Gamma)$ is plt, $S$ is normal. Define 
$$
K_S+B_S=(K_X+B)|_S
$$ 
by adjunction. Then $(S,B_S)$ is lc and 
the coefficients of $B_S$ belong to $\Phi(\mathfrak{S})$ 
for some finite set $\mathfrak{S}\subset [0,1]$ of rational numbers 
depending only on $p$ [\ref{PSh-II}, Proposition 3.8][\ref{B-compl}, Lemma 3.3] where 
$$
\Phi(\mathfrak{S})=\{1-\frac{s}{l} \mid s\in \mathfrak{S}, l\in \N\}.
$$

By assumption, $M|_S\equiv 0$, hence $M|_S\sim_\Q 0$ as $M$ is semi-ample. 
In particular, 
$$
-(K_X+\Gamma)|_S\sim_\R (\alpha M-(K_X+\Gamma))|_S
$$ 
is ample. Moreover, since $(X,\Gamma)$ is plt, defining 
$$
K_S+\Gamma_S=(K_X+\Gamma)|_S
$$ 
by adjunction, $(S,\Gamma_S)$ is klt. Thus we deduce that $S$ is of Fano type. 
On the other hand, 
$$
-(K_S+B_S)\sim_\Q (M-(K_X+B))|_S
$$ 
is nef. 
Therefore, applying Theorem \ref{t-bnd-compl}, there is a natural number $n$ depending only on $d,\mathfrak{S}$ 
such that $K_S+B_S$ has an $n$-complement $K_S+B_S^+$ which satisfies 
$B_S^+\ge B_S$. We can assume that $nB$ is an integral divisor after replacing $n$ with $np$.

Note that since $S$ is of Fano type and $M|_S\sim_\Q 0$, 
in fact we have $M|_S\sim 0$ as $\Pic(S)$ is torsion-free (cf. [\ref{Isk-Prokh}, Proposition 2.1.2]; the  
point is that $h^i(\mathcal{O}_S)=0$ for $i>0$ which is a consequence of 
Kawamata-Viehweg vanishing theorem).\\

\emph{Step 3}.
{In this step we take a resolution of $X$ and define appropriate divisors on it.}
Let $\phi\colon X'\to X$ be a log resolution of $(X,B+\Gamma)$, $S'\subset X'$ be the birational transform of $S$, and 
$\psi\colon S'\to S$ be the induced morphism. Put
$$
N:=M-(K_{X}+B)
$$
and let $K_{X'}+B',M',N'$ be the pullbacks of 
$K_X+B,M,N$, respectively. Let $E'$ be the sum of the components of $B'$ which have coefficient $1$, 
and let $\Delta'=B'-E'$. Define  
$$
L':=(n+2)M'-nK_{X'}-nE'-\rddown{(n+1){\Delta'}}
$$
which is an integral divisor. Note that 
$$
L'=(n+2)M'-nK_{X'}-nB'+n\Delta'-\rddown{(n+1){\Delta'}}
$$
$$
=2M'+n(M'-K_{X'}-B')+n\Delta'-\rddown{(n+1){\Delta'}}
$$
$$
=2M'+nN'+n\Delta'-\rddown{(n+1){\Delta'}}.
$$
Now write 
$$
K_{X'}+\Gamma'=\phi^*(K_{X}+\Gamma).
$$ 
Then we can assume that $B'-\Gamma'=\phi^*(B-\Gamma)$ has sufficiently small (positive or negative) coefficients by
Step 1. The coefficient of $S'$ in both $B'$ and $\Gamma'$ is $1$ but its coefficient in $\Delta'$ is $0$.\\ 

\emph{Step 4}.
{In this step we introduce a boundary $\Theta'$ and study related divisors.}
Let $P'$ be the unique integral divisor so that 
$$
\Theta':=\Gamma'+{n{\Delta'}}-\rddown{(n+1){\Delta'}}+P'
$$ 
is a boundary, $(X',\Theta')$ is plt, and $\rddown{\Theta'}=S'$ (in particular, we are assuming $\Theta'\ge 0$). 
More precisely, we let $\mu_{S'}P'=0$ and for each prime divisor $D'\neq S'$, we let 
$$
\mu_{D'}P':=-\mu_{D'}\rddown{\Gamma'+{n{\Delta'}}-\rddown{(n+1){\Delta'}}}
$$
which satisfies 
$$
\mu_{D'}P'=-\mu_{D'}\rddown{\Gamma'-\Delta'+{(n+1){\Delta'}}-\rddown{(n+1){\Delta'}}}
$$
$$
=-\mu_{D'}\rddown{\Gamma'-\Delta'+\langle (n+1)\Delta'\rangle}
$$ 
where $\langle (n+1)\Delta'\rangle$ is the fractional part of $(n+1)\Delta'$.

We claim that $P'\ge 0$. Pick a component $D'$ of $P'$. By the above, $D'\neq S'$. 
If $D'$ is a component of $E'$, then 
$D'$ is not a component of $\Delta'$ and $\mu_{D'}\Gamma'\in(0,1)$ as $B'-\Gamma'$ has 
small coefficients and $\mu_{D'}B'=1$, hence $\mu_{D'}P'=0$; 
on the other hand, if $D'$ is not a 
component of $E'$, then the absolute value of 
$$
\mu_{D'}(\Gamma'-\Delta')=\mu_{D'}(\Gamma'-B')
$$ 
is sufficiently small and 
$$
\mu_{S'}\langle (n+1)\Delta'\rangle\in [0,1),
$$ 
hence $\mu_{D'}P'=0$ or $\mu_{D'}P'=1$, so in any case $\mu_{D'}P'\ge 0$. 

We show $P'$ is exceptional$/X$. 
Assume $D'$ is a component of $P'$ which is not exceptional$/X$ and let $D$ be its pushdown. 
Since $S'$ is not a component of $P'$, $D'\neq S'$.
Let $E,\Delta$ be the pushdowns of $E',\Delta'$. 
Since $nB$ and $nE$ are integral, $\mu_{D}n\Delta$ is integral, hence  
$$
\mu_{D}\rddown{(n+1)\Delta}=\mu_{D}n\Delta.
$$
Then 
$$
\mu_{D'}P'=-\mu_{D'}\rddown{\Gamma'+{n{\Delta'}}-\rddown{(n+1){\Delta'}}}
$$
$$
=-\mu_{D}\rddown{\Gamma+{n{\Delta}}-\rddown{(n+1){\Delta}}}
=-\mu_{D}\rddown{\Gamma}=0
$$ 
because $\mu_{D}\Gamma\in [0,1)$, a contradiction.\\

\emph{Step 5.}
{In this step we show that sections of $(L'+P')|_{S'}$ can be lifted to $X'$.}
Let 
$$
A:=\alpha M-(K_X+\Gamma).
$$ 
Letting  $A'=\phi^*A$ we have 
$$
K_{X'}+\Gamma'+A'-\alpha M'=0.
$$ 
Then 
$$
L'+P'= 2M'+nN'+ {n{\Delta'}}-\rddown{(n+1){\Delta'}}+ P'
$$
$$
=K_{X'}+\Gamma'+A'-\alpha M'+2M'+nN'+{n{\Delta'}}-\rddown{(n+1){\Delta'}}+ P'
$$
$$
=K_{X'}+\Theta'+A'+nN'+(2-\alpha) M' . 
$$
By Step 1, $2-\alpha\ge 0$, so 
$$
A'+nN'+(2-\alpha) M'
$$ 
is nef and big. Moreover, $(X',\Theta')$ is plt with $\rddown{\Theta'}=S'$, so  
we have 
$$
h^1(L'+P'-S')=0
$$ 
by the Kawamata-Viehweg vanishing theorem as $(X',\Theta'-S')$ is klt. Thus the restriction map
$$
H^0(L'+P')\to H^0((L'+P')|_{S'})
$$
is surjective.\\

\emph{Step 6.}
{In this step we introduce an effective divisor $G_{S'}\sim (L'+P')|_{S'}$.}
Recall the $n$-complement $K_S+B_S^+$ from Step 2.
Let $R_{S}:=B_{S}^+-B_{S}$ which satisfies 
$$
-n(K_{S}+B_{S})=-n(K_{S}+B_{S}^++B_S-B_S^+)\sim -n(B_S-B_S^+)=nR_{S}\ge 0.
$$
Let $R_{S'}$ be the pullback of $R_{S}$. Since $M'$ is Cartier and since $S'$ is a 
component of $B'$ with coefficient $1$, the divisor 
$$
N'|_{S'}=(M'-(K_{X'}+B'))|_{S'}
$$
is well-defined up to linear equivalence. 
Now since $M'|_{S'}\sim 0$ by Step 2, we have   
$$
nN'|_{S'}=n(M'-(K_{X'}+B'))|_{S'}\sim-n(K_{X'}+B')|_{S'}
$$
$$
=-n\psi^*(K_{S}+B_{S})\sim n\psi^*R_S=nR_{S'}\ge 0. 
$$
Then 
$$
(L'+P')|_{S'}=( 2M'+ nN'+{n{\Delta'}}-\rddown{(n+1){\Delta'}}+P')|_{S'}
$$
$$
\sim G_{S'}:=nR_{S'}+{n{\Delta_{S'}}}-\rddown{(n+1){\Delta_{S'}}}+P_{S'}
$$
where $\Delta_{S'}=\Delta'|_{S'}$ and $P_{S'}=P'|_{S'}$. Note that 
$$
\rddown{(n+1){\Delta'}}|_{S'}=\rddown{(n+1){\Delta'}|_{S'}}
$$ 
since $\Delta'$ and $S'$ intersect transversally.

We show $G_{S'}\ge 0$. Assume $C'$ is a component of $G_{S'}$ 
with negative coefficient. Then since $R_{S'}$ and $P_{S'}$ are effective, 
 there is a component $D'$ of ${{\Delta'}}$ such that $C'$ is a 
component of $D'|_{S'}$. But 
$$
\mu_{C'} ({n{\Delta_{S'}}}-\rddown{(n+1){\Delta_{S'}}})=\mu_{C'} (-\Delta_{S'}+\langle(n+1)\Delta_{S'}\rangle)\ge 
-\mu_{C'} \Delta_{S'}=-\mu_{D'} \Delta'>-1
$$ 
which gives $\mu_{C'}G_{S'}>-1$ and this in turn implies $\mu_{C'}G_{S'}\ge 0$ because $G_{S'}$ is integral, a contradiction. 
Therefore $G_{S'}\ge 0$, and by Step 5, $L'+P'\sim G'$ for some effective divisor $G'$ whose support does not contain $S'$ 
and $G'|_{S'}=G_{S'}$.\\ 

\emph{Step 7.}
{In this step we introduce $\Lambda$ and show that it satisfies the properties listed in the theorem.}
Let $L,P,G$ be the pushdowns to $X$ of $L',P',G'$.
By the definition of $L'$, by the previous step, and by the exceptionality of $P'$,   we have  
$$
(n+2)M-nK_{X}-nE-\rddown{(n+1)\Delta}=L=L+P\sim G\ge 0.
$$
Since $nB$ is integral, $\rddown{(n+1)\Delta}= n\Delta$, so  
$$
(n+2)M-n(K_{X}+B)
$$
$$
=(n+2)M-nK_{X}-nE-{n\Delta}
=L\sim nR:=G\ge 0.
$$

Let $\Lambda:={B}^+:=B+R$. For consistency of notation, for the rest of the proof we will 
use $B^+$ instead of $\Lambda$. By construction, 
$$
n(K_X+B^+)\sim (n+2)M.
$$ 
It remains to show that $(X,{B}^+)$ is lc over $z=f(S)$. First we show that $(X,{B}^+)$ is lc near $S$:
this  follows from inversion of adjunction [\ref{kawakita}], if we show 
$$
K_{S}+B_{S}^+=(K_{X}+{B}^+)|_{S}
$$
which is equivalent to showing $R|_{S}=R_{S}$.
Since  
$$
nR':=G'-P'+\rddown{(n+1)\Delta'}- n\Delta'\sim L'+\rddown{(n+1)\Delta'}- n\Delta'=2M'+nN'\sim_\Q 0/X
$$
and since $\rddown{(n+1)\Delta}- n\Delta=0$, we get  $\phi_*nR'=G=nR$ and that $R'$ is the pullback of $R$. 
Now by Step 6,
$$
nR_{S'}=G_{S'}-P_{S'}+\rddown{(n+1)\Delta_{S'}}- n\Delta_{S'}
$$
$$
=(G'-P'+\rddown{(n+1)\Delta'}- n\Delta')|_{S'}=nR'|_{S'}
$$
which means $R_{S'}=R'|_{S'}$, hence $R_S$ and $R|_S$ both pull back to $R_{S'}$ which implies
$R_{S}=R|_{S}$ as required.

Assume now that $(X,{B}^+)$ is not lc over $z=f(S)$. By the previous paragraph, $(X,B^+)$ is lc near $S$ 
and $S$ is a non-klt centre of this pair. On the other hand, 
$(X,\Gamma)$ is plt with $\rddown{\Gamma}=S$, so if $u>0$ is sufficiently small, then  
$$
(X,(1-u)B^++u\Gamma)
$$
is plt near $S$ and $S$ is a non-klt centre of this pair and no other non-klt centre intersects $S$. 
Then since $(X,B^+)$ is not lc over $z$, 
the non-klt locus of  
$$
(X,(1-u)B^++u\Gamma)
$$
has at least two connected components (one of which is $S$) near the fibre $f^{-1}\{z\}$. This  
contradicts the connectedness principle [\ref{Kollar-flip-abundance}, Theorem 17.4]
as 
$$
-(K_X+(1-u)B^++u\Gamma)=-(1-u)(K_X+B^+)-u(K_X+\Gamma)
$$
$$
\sim_\R -u(K_X+\Gamma)\sim_\R u\alpha M-u(K_X+\Gamma)/Z
$$
is ample over $Z$. Therefore, $(X,B^+)$ is lc over $z$.

\end{proof}

\begin{proof}(of Theorem \ref{t-compl-near-lcc})
\emph{Step 1.}
{In this step we take a $\Q$-factorialisation of $X$ and consider the contraction defined 
by $aM-(K_X+B)$ for some $a>1$.}
Replacing $(X,B)$ with a $\Q$-factorial dlt model, 
we can assume $X$ is $\Q$-factorial and 
that $S$ is a component of $\rddown{B}$. All the assumptions of the theorem are preserved. 
The Fano type property is preserved by a relative version of [\ref{B-compl}, 2.13(7)] as $-(K_X+B)$ is nef over $Z$.

By assumption, $M$ is the pullback of an ample divisor on $Z$. Moreover, 
$M-(K_X+B)$ is nef and big, hence in particular it is semi-ample over $Z$ 
since $X$ is of Fano type over $Z$. Thus by Lemma \ref{l-semi-ample-div}, if $a>1$ is a real number, then 
$aM-(K_X+B)$ is semi-ample globally, so
it defines a birational contraction $X\to U$ which is simply the contraction over $Z$ defined 
by $M-(K_X+B)$: indeed, for any curve $C$ on $X$, 
$$
\mbox{$(aM-(K_X+B))\cdot C=0$ iff $(M-(K_X+B))\cdot C=0$ and $M\cdot C=0$.}
$$
In particular, $X\to U$ is birational as $aM-(K_X+B)$ is big, 
the induced map $U\bir Z$ is a morphism and $K_X+B\sim_\Q 0/U$.\\ 

\emph{Step 2.}
{In this step we make some further modifications of $(X,B)$ using the MMP.}
Since $X$ is of Fano type over $Z$, it is also of Fano type over $U$. 
 Run an MMP over $U$ on $-(K_X+S)$ and let $X'$ be the resulting model. The MMP does not contract $S$ 
as 
$$
B-S\sim_\Q -(K_X+S)/U
$$ 
and $S$ is not a component of $B-S$. Assume that there exist $n\in \N$ and $\Lambda'\ge B'$ 
such that $(X',\Lambda')$ is lc over $z$ and 
$$
n(K_{X'}+\Lambda')\sim (n+2)M'
$$ 
where 
$B',M'$ are the pushdowns of $B,M$. Then since $K_X+B\sim_\Q 0/U$, taking the crepant pullback 
of $K_{X'}+\Lambda'$ to $X$ we get $\Lambda\ge B$ such that $(X,\Lambda)$ is lc over $z$ and 
$$
n(K_{X}+\Lambda)\sim (n+2)M.
$$ 
Note that $M'$ is Cartier as the Cartier property of $M$ is preserved by the MMP, by the cone theorem.
Thus replacing $X$ with $X'$,  
we can assume $-(K_X+S)$ is semi-ample over $U$ defining a contraction $X\to V/U$. 
Moreover, every non-klt centre of $(X,S)$ is contained in $S$ because 
$(X,0)$ is klt as $X$ is $\Q$-factorial and of Fano type over $Z$. 

We claim that $S$ is not contracted over $V$: otherwise since $K_X+S\sim_\Q 0/V$, the pair $(V,0)$ would be lc 
but not klt and this is a contradiction because there is a boundary $\Theta$ such that 
$(V,\Theta)$ is klt as $V$ is of Fano type over $Z$. 

Note that the pushdown of $M$ to $V$ is Cartier, again by the cone theorem.
Replacing $X$ with $V$, we can then assume that $-(K_X+S)$ is ample over $U$. The $\Q$-factorial 
property of $X$ maybe lost but we do not need it any more.\\ 

\emph{Step 3.}
{In this step we introduce a boundary $\Delta$ and study some of its properties.}
Let 
$$
\Delta=(1-b)B+bS
$$ 
for a sufficiently small real number $b>0$ (depending on $a$). Then $(X,\Delta)$ is lc and $S$ is a 
non-klt centre of this pair. Moreover, every non-klt place of $(X,\Delta)$ is a non-klt place of both 
$(X,B)$ and $(X,S)$, in particular, every non-klt centre of $(X,\Delta)$ is 
also a non-klt centre of $(X,S)$, hence such centres are contained in $S$ and they are mapped to $z$. 

On the other hand,  
since $aM-(K_X+B)$ is the pullback of an ample divisor on $U$ and $aM-(K_X+S)$ is ample over $U$, 
we see that 
$$
aM-(K_X+\Delta)=aM-(K_X+(1-b)B+bS)
$$
$$
=aM-(1-b)(K_X+B)-b(K_X+S)
$$
$$
=(1-b)(aM-(K_X+B))+b(aM-(K_X+S))
$$ 
 is globally ample.\\

\emph{Step 4.}
{In this step we produce a plt pair and apply Proposition \ref{l-bnd-comp-Gamma} to finish the proof.}
By Lemma \ref{l-plt-blowup} applied to $(X,\Delta)$, there exist a prime divisor $T$ over $X$ and a projective birational 
morphism $Y\to X$ 
such that 
\begin{itemize}
\item either $Y\to X$ is small or it contracts $T$ but no other divisors,

\item $(Y,T)$ is plt,

\item $-(K_Y+T)$ is ample over $X$, and 

\item $a(T,X,\Delta)=0$.
\end{itemize}
In particular, $T$ is mapped to $z$, by Step 3, as it is an lc place of $(X,S)$. Moreover, since $\Delta\le B$, 
$$
a(T,X,B)=0.
$$

Let $K_Y+\Delta_Y$ be the pullback of $K_X+\Delta$.
Define 
$$
\Gamma_Y=(1-v) \Delta_Y+vT
$$ 
for some sufficiently  small $v>0$. 
Let $M_Y$ be the pullback of $M$. Let $\alpha=(1-v)a$. Since $aM-(K_X+\Delta)$
is ample and since $-(K_Y+T)$ is ample over $X$, 
$$
\alpha M_Y-(K_Y+\Gamma_Y)=(1-v)aM_Y-(K_Y+(1-v)\Delta_Y+vT)
$$
$$
=(1-v)a M_Y-(1-v)(K_Y+\Delta_Y)-v(K_Y+T)
$$
$$
=(1-v)(a M_Y-(K_Y+\Delta_Y))-v(K_Y+T)
$$
is ample. Moreover, $(Y,\Gamma_Y)$ is plt and $T=\rddown{\Gamma_Y}$ maps to $z$. 

Let $K_Y+B_Y$ be the pullback of $K_X+B$. By definition of $\Delta$, $T$ is an lc place of $(X,B)$, hence 
it appears in $B_Y$ with coefficient $1$.
Now we can replace $(X,B),M,S$ with $(Y,B_Y),M_Y,T$ and apply Proposition \ref{l-bnd-comp-Gamma}.

\end{proof}

%%%%%%%%%%%%%%%%%%%

\section{\bf Singularities of divisors with bounded degree}

In this section, we make necessary preparations for the proof of Theorem \ref{t-bnd-lct}. 
This involves a reduction to the case of projective space and eventually to the toric version 
of Theorem \ref{t-BAB} which is well-known [\ref{A-L-Borisov}].

\subsection{Finite morphisms to the projective space}\label{ss-etale}
We prove a version of Noether normalisation theorem. Part of it  
is similar to [\ref{Kedlaya}, Theorem 2] proved for fields of positive characteristic.

\begin{prop}\label{p-map-to-p^n}
Let $(X,\Lambda=\sum_1^d S_i)$ be a projective log smooth pair of dimension $d$ where 
$\Lambda$ is reduced. Let $B=\sum b_jB_j\ge 0$ be an $\R$-divisor.
Assume 
\begin{itemize}

\item $x\in  \bigcap_1^d S_i$,

\item $\Supp B$ contains no stratum of $(X,\Lambda)$ except possibly $x$, and 

\item $A$ is a very ample divisor such that $A-S_i$ is very ample for each $i$.\\
\end{itemize}
Then there is a finite morphism 
$$
\pi\colon X\to \PP^d=\Proj k[t_0,\dots,t_d]
$$ 
such that 
\begin{itemize}

\item $\pi(x)=z:=(1:0:\cdots:0)$, 

\item $\pi(S_i)=H_i$ where $H_i$ is the hyperplane defined by $t_i$, 
  
\item $\pi$ is \'etale over a neighbourhood of $z$, 

\item $\Supp B$ contains no point of $\pi^{-1}\{z\}$ except possibly $x$, and 

\item $\deg \pi={A^d}$ and $\deg_{H_i} C\le \deg_AB$  where $C=\sum b_j\pi(B_j)$.  
\end{itemize} 
\end{prop}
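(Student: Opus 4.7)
The plan is to construct $\pi$ from a well-chosen $(d+1)$-dimensional base-point-free subsystem of $|A|$. For each $i=1,\ldots,d$ the very ampleness of $A-S_i$ provides a general divisor $E_i\in|A-S_i|$, and we take $u_i\in H^0(X,A)$ to be the section whose divisor is $S_i+E_i$; we then take a general $u_0\in H^0(X,A)$. The morphism $\pi\colon X\to\PP^d$ is defined by $y\mapsto(u_0(y):u_1(y):\cdots:u_d(y))$, so that in the affine chart $t_0\neq 0$ around $z$ the coordinates $t_i/t_0$ pull back to $u_i/u_0$.

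The technical heart is to arrange the following genericity conditions simultaneously, each of which is an open Bertini-type condition: (a) each $E_i$ is prime, avoids $x$, does not contain $S_i$, and meets every stratum of $(X,\Lambda)$ transversally; (b) the intersection $F:=\bigcap_{i=1}^d(S_i\cup E_i)$ is $0$-dimensional; (c) $F$ is disjoint from every $S_i\cap E_i$, so at each $y\in F$ exactly $d$ of the divisors $\{S_1,E_1,\ldots,S_d,E_d\}$ pass through $y$ and they meet transversally; (d) $F$ is disjoint from $\Supp B\setminus\{x\}$; and (e) $u_0$ is nowhere zero on $F$. Condition (d) is where the hypothesis on $B$ is really used: for each proper $I\subsetneq\{1,\ldots,d\}$ the positive-dimensional stratum $\bigcap_{i\in I}S_i$ is not contained in $\Supp B$, so after cutting with general $E_j\in|A-S_j|$ for $j\notin I$ the resulting finite set avoids $\Supp B$; for $I=\{1,\ldots,d\}$ the $0$-dimensional stratum $\bigcap_{i=1}^d S_i$ consists of $x$ together with other stratum points which by hypothesis are not in $\Supp B$. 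Condition (c) is a codimension count: $S_i\cap E_i\cap\bigcap_{j\ne i}(S_j\cup E_j)$ has expected codimension $d+1$ in $X$ and so is empty once the $E_j$ meet the strata transversally.

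With these choices $\pi$ is a morphism by (b) and (e), and it is finite because $A=\pi^*\mathcal{O}_{\PP^d}(1)$ is ample and so has no positive-dimensional fibre; its degree is $(\pi^*H)^d=A^d$. The identities $\pi(x)=z$ and $\pi(S_i)=H_i$ are immediate (the latter because $\pi(S_i)\subseteq H_i$ is a closed irreducible subvariety of the same dimension), and condition (d) gives the statement about $\Supp B\cap\pi^{-1}\{z\}$. For étaleness over a neighbourhood of $z$, openness reduces it to étaleness at each $y\in\pi^{-1}\{z\}=F$; by (c), at such a $y$ the $d$ local equations of the divisors passing through $y$ form a regular system of parameters, and modulo units these equations equal $\pi^*(t_1/t_0),\ldots,\pi^*(t_d/t_0)$, so Lemma \ref{l-etale} applies. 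Finally, since $\pi^*H_i=S_i+E_i\sim A$ and $\pi$ is finite, the projection formula gives $A^{d-1}\cdot B_j=(\deg\pi|_{B_j})\,H_i^{d-1}\cdot\pi(B_j)\ge H_i^{d-1}\cdot\pi(B_j)$ for every prime component $B_j$ of $B$, and summing over $j$ yields $\deg_{H_i}C\le\deg_A B$.

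The main obstacle is the simultaneous Bertini argument establishing (a)--(e); the most delicate step is (d), where one must iteratively cut the positive-dimensional strata by general members of $|A-S_j|$ while steering the resulting finite intersection clear of the proper closed subsets $\Supp B\cap(\text{stratum})$. Once these genericity statements are in hand, all other assertions follow essentially formally.
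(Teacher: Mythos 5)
Your proposal is correct and takes essentially the same approach as the paper: you define $\pi$ by the sections cutting out $S_i$ plus a general member of $|A-S_i|$ together with one further general section, deduce finiteness from ampleness of $A=\pi^*\mathcal{O}_{\PP^d}(1)$, get \'etaleness at the points of $\pi^{-1}\{z\}$ from transversality and Lemma \ref{l-etale}, obtain the support condition from genericity together with the hypothesis that $\Supp B$ contains no stratum except possibly $x$, and prove the degree statements via the projection formula. The differences are only organizational: the paper packages your genericity conditions (a)--(e) as log smoothness of $(X,\sum_0^d R_i)$ with $R_i=S_i+D_i$ (ensuring the stratum condition by an iterative general-position argument) and computes $\deg\pi=A^d$ by induction on dimension via a hyperplane section rather than by the one-line top intersection computation.
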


\begin{proof}
Since $A-S_i$ is very ample for each $i$, taking general divisors $D_i\in |A-S_i|$ 
we can make sure $(X,\sum_1^d R_i)$ is log smooth where $R_i:=D_i+S_i\sim A$. 
Moreover, we can assume that $\Supp B$ contains no 
stratum of $(X,\sum_1^d R_i)$ other than $x$: since $D_1$ is general, it is not a component of 
$B$, and if $I\neq x$ is a stratum of $(X,\Lambda)$, then $D_1|_I$ has no common component
with $B|_I$; this ensures $\Supp B$ does not contain any stratum of $(X,\Lambda+D_1)$ 
other than $x$; repeating this process proves the claim. 

Now each $R_i$ is the zero divisor of  some global 
section $\alpha_i$ of $\mathcal{O}_X(A)$. Choose another global section $\alpha_{0}$ so that if 
$R_{0}$ is the zero divisor of $\alpha_{0}$, then $(X,\sum_0^{d} R_i)$ is still log smooth, and 
that  $\bigcap_0^{d} R_i$ is empty. 
The sections $\alpha_0,\dots,\alpha_{d}$ have no common vanishing point,  
so they define a morphism $\pi \colon X\to \PP^d$ so that $\mathcal{O}_X(A)\simeq \pi^*\mathcal{O}_{\PP^d}(1)$ and 
the global section $t_i$ of $\mathcal{O}_{\PP^d}(1)$ 
pulls back to $\alpha_i$, for each $0\le i \le d$. In particular, since $A$ is ample, $\pi$ does not contract any curve, 
hence $\pi$ is a surjective finite morphism. 

Since $t_i$ pulls back to $\alpha_i$, the zero divisor of $t_i$ pulls back to the zero divisor of $\alpha_i$, that is, 
$\pi^*H_i=R_i$. Thus $\pi(R_i)=H_i$ which in turn gives $\pi(S_i)=H_i$.  
Moreover, since 
$$
z:=(1:0:\cdots:0)=\bigcap_1^d H_i,
$$ 
we get 
$$
\pi^{-1}\{z\}=\bigcap_1^d\pi^{-1}H_i=\bigcap_1^dR_i
$$ 
which shows $\pi(x)=z$ as $x\in \bigcap_1^dS_i\subseteq \bigcap_1^dR_i$. 

The rest of the proof is elementary which uses \ref{l-etale} among other things and we leave it to the reader.

\end{proof}

\subsection{Bound on the length of blowup sequences}

We state a baby version of Theorem \ref{t-bnd-lct}, due to Viehweg (see [\ref{Viehweg}, Corollary 5.11]), 
before moving on to the main result of this subsection.

\begin{lem}\label{l-bnd-lct-no-boundary}
Let $d,r$ be natural numbers. Then there is a 
positive real number $t$ depending only on $d,r$ satisfying the following. 
Assume 
\begin{itemize}
\item $X$ is a smooth projective variety of dimension $d$,

\item $A$ is a very ample divisor on $X$ with $A^d\le r$, 

\item $L\ge 0$ is an $\R$-divisor on $X$ with $\deg_AL\le r$.\\
\end{itemize}
Then $(X,tL)$ is klt. 
\end{lem}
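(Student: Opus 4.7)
The plan is to reduce to the case $X=\PP^d$ via Proposition \ref{p-map-to-p^n} and then establish an explicit lct bound on $\PP^d$ by induction on dimension using Kawakita's inversion of adjunction.

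First I fix a closed point $x\in X$. Since $A$ is very ample on the smooth variety $X$, a generic choice of divisors $S_1,\dots,S_d\in |A|$ passing through $x$ yields a log smooth pair $(X,\Lambda:=\sum_{i=1}^d S_i)$ such that $\Supp L$ contains no stratum of $(X,\Lambda)$ other than possibly $x$; the remaining strata are positive-dimensional and a generic choice of the $S_i$ keeps them out of the fixed divisor $\Supp L$. Applying Proposition \ref{p-map-to-p^n} with $B=L$ then produces a finite morphism $\pi\colon X\to \PP^d$ which is \'etale at $x$, maps $x$ to $z:=(1{:}0{:}\cdots{:}0)$, and satisfies $\deg\pi=A^d\le r$ together with $\deg_H C\le \deg_A L\le r$ for $C:=\sum b_j\pi(L_j)$ and any hyperplane $H$ on $\PP^d$. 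Lemma \ref{l-etale-0}, applied analytically as in \S\ref{ss-anal-pairs}, then shows that $(X,tL)$ is klt near $x$ if and only if $(\PP^d,tC)$ is klt near $z$. It therefore suffices to produce $t>0$ depending only on $d,r$ such that for every effective $\R$-divisor $C$ on $\PP^d$ with $\deg C\le r$, the pair $(\PP^d,tC)$ is klt near $z$.

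I will establish $\lct_z(\PP^d,C)\ge 1/r$ by induction on $d$. For $d=1$ the coefficient of $z$ in $C$ is at most $\deg C\le r$, so the bound is immediate. For $d\ge 2$, pick a generic hyperplane $H\subset \PP^d$ through $z$ so that $H\cong \PP^{d-1}$, $H\not\subset \Supp C$, and $\deg(C|_H)\le r$; by the inductive hypothesis $\lct_z(H,C|_H)\ge 1/r$. For any $0<t<1/r$, the pair $(H,tC|_H)$ is klt at $z$, and Kawakita's inversion of adjunction [\ref{kawakita}] yields that $(\PP^d,H+tC)$ is plt in a neighborhood of $H$ at $z$. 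For any prime divisor $E$ over $\PP^d$ whose center contains $z$, the identity $a(E,\PP^d,tC)=a(E,\PP^d,H+tC)+\mu_E(\phi^*H)$ on a log resolution $\phi$, combined with the plt condition, gives $a(E,\PP^d,tC)>0$: for $E\ne H$ this uses $\mu_E(\phi^*H)\ge 0$, while $E=H$ is handled directly since $H$ is not a component of $C$. Hence $(\PP^d,tC)$ is klt at $z$.

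Combining the two steps, $(X,tL)$ is klt in a neighborhood of every closed point of $X$ for any $t<1/r$, so taking for example $t=1/(r+1)$ completes the proof. The only delicate point is verifying the genericity hypotheses of Proposition \ref{p-map-to-p^n}, which is a routine Bertini argument given that $A$ is very ample on the smooth variety $X$.
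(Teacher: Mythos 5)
Your proposal is correct in outline but takes a genuinely different and more circuitous route than the paper. The paper's proof runs a direct induction on $d$ on $X$ itself: let $s=\lct(X,0,L)$ and let $T$ be an lc place of $(X,sL)$; if $T$ is a divisor on $X$ then $1=\mu_T sL\le s\deg_A L\le sr$ gives $s\ge 1/r$ immediately, and if $T$ is exceptional one picks a general $H\in|A|$ meeting the centre of $T$, uses inversion of adjunction [\ref{kollar-mori}, Theorem 5.50] to see that $(H,sL|_H)$ is not klt, and applies the inductive hypothesis to $(H, L|_H)$ with the polarization $A|_H$. You instead first transport the problem to $\PP^d$ via the Noether-normalisation morphism of Proposition \ref{p-map-to-p^n} together with Lemma \ref{l-etale-0}, and then run essentially the same hyperplane-restriction induction on $\PP^d$. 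That detour is unnecessary here: the reduction to $\PP^d$ is the mechanism the paper develops for Proposition \ref{p-bnd-lct-toroidal-1}, where a boundary with uncontrolled support must be moved to a toric variety, but in the present lemma there is no boundary to transport and the degree bound on $L$ restricts directly to a general $H\in|A|$, so the induction can be carried out on $X$ without leaving it.

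There is also a technical gap in your reduction step. Proposition \ref{p-map-to-p^n} assumes that $A-S_i$ is very ample for each $i$, but you take $S_i\in|A|$, so $A-S_i\sim 0$ and the hypothesis fails. Either replace $A$ by $2A$ (keeping $S_i\in|A|$, so $2A-S_i\sim A$ is very ample), which changes the degree bounds to $(2A)^d\le 2^d r$ and $\deg_{2A}L\le 2^{d-1}r$ and hence forces $t$ to be on the order of $1/(2^{d-1}r)$ rather than $1/(r+1)$; or else check directly that the construction in the proof of that proposition goes through with $D_i=0$. In either case the explicit value $t=1/(r+1)$ you wrote at the end is not justified as stated. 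Two smaller points: your parenthetical that ``the remaining strata are positive-dimensional'' overlooks that $\bigcap_1^d S_i$ generically consists of $A^d$ points, so you must also arrange (by genericity of the $S_i$) that the zero-dimensional strata other than $x$ avoid $\Supp L$; and the inversion-of-adjunction step you invoke in the plt form is the classical one from [\ref{kollar-mori}, Theorem 5.50], not Kawakita's lc version [\ref{kawakita}], which is what the paper actually cites there.
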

\begin{proof}
The assumptions imply that the multiplicity $\mu_xL$ is bounded from above, for each closed point $x\in X$. Thus 
taking $t$ small enough we have $\mu_xtL<1$. Now applying [\ref{Lazar}, Proposition 9.5.13] we deduce that 
$(X,tL)$ is klt. It is also easy to prove the lemma using induction on dimension and inversion of adjunction.

\end{proof}

It is not hard to extend the lemma and prove Theorem \ref{t-bnd-lct} when $(X,\Supp B)$ belongs to some 
bounded family of pairs [\ref{B-compl}, Proposition 4.2]. The general case of the theorem however
requires a lot more work and the main reason is that we have no control over the support of $B$. 
Much of the difficulties already appear in the case $X=\PP^d$. 

In the next key result, we bound the number of blowups in the centre blowup sequence associated to 
certain lc places.

\begin{prop}\label{p-bnd-lct-toroidal-1}
Let $d,r$ be natural numbers and $\epsilon$ be a positive real number. 
Then  there is a natural number $p$ depending only on $d,r,\epsilon$ satisfying the following. 
Assume 
\begin{itemize}

\item $(X,B)$ is a projective $\epsilon$-lc pair of dimension $d$, 

\item $A$ is a very ample divisor on $X$ with $A^d\le r$, 

\item $(X,\Lambda)$ is log smooth where $\Lambda\ge 0$ is reduced, 

\item $\deg_A B\le r$ and $\deg_A \Lambda\le r$, 

\item  $x$ is a zero-dimensional stratum of $(X,\Lambda)$,

\item  $\Supp B$ does not contain any stratum of $(X,\Lambda)$ except possibly $x$,

\item $T$ is an lc place of $(X,\Lambda)$ with centre $x$, and 

\item  $a(T,X,B)\le 1$.\\
\end{itemize}
Then $T$ can be obtained by a sequence of centre blowups,  toroidal with respect to $(X,\Lambda)$, 
of length at most $p$.
\end{prop}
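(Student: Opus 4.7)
The plan is to reduce to $X = \PP^d$ via Proposition~\ref{p-map-to-p^n}, realize $T$ as a toric valuation corresponding to a primitive $v = (v_1, \ldots, v_d) \in \Z^d_{\ge 1}$, bound the length of the centre blowup sequence producing $T$ by $|v|-1$ using Lemma~\ref{l-coeff-tor-blowup}, and finally bound $|v|$ in terms of $d, r, \epsilon$.

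After replacing $A$ by a uniformly bounded multiple so that $A - S_i$ is very ample for each $i$ (which only modifies $r$ in a controlled way), Proposition~\ref{p-map-to-p^n} yields a finite morphism $\pi\colon X \to \PP^d$ that is \'etale at $x$, with $\pi(x) = z = (1:0:\cdots:0)$, $\pi(S_i) = H_i$, and $\deg_{H_i} C \le r$ where $C := \sum b_j \pi(B_j)$. By Lemma~\ref{l-etale-0}, $\pi$ induces an analytic isomorphism between neighborhoods of $x$ and $z$ that identifies $(X, B, \Lambda)$ with $(\PP^d, C, \sum_1^d H_i)$ locally, preserving log discrepancies. Under this identification $T$ is a toroidal lc place of $(\PP^d, \sum_1^d H_i)$ over $z$, i.e.\ a toric valuation with primitive weight vector $v \in \Z^d_{\ge 1}$, and $a(T, X, B) = |v| - \mathrm{ord}_v(C)$.

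Consider the centre blowup sequence producing $T_v$; it is toroidal with respect to $(\PP^d, \sum_1^d H_i)$, and at each step the new centre lies in the newly created exceptional divisor, since the blowup subdivides the cone of the fan containing $v$. Lemma~\ref{l-coeff-tor-blowup} then gives $\mu_T \phi^*\Lambda \ge p + 1$, where $p$ denotes the length of the sequence. The standard toric computation $\mu_T \phi^*\Lambda = \sum_i v_i = |v|$ yields $p \le |v| - 1$, so it suffices to bound $|v|$. The hypothesis $a(T, X, B) \in [\epsilon, 1]$ gives $|v| - 1 \le \mathrm{ord}_v(C) \le |v| - \epsilon$, while the degree bound together with multiplicity estimates at $z$ yields $\mathrm{ord}_v(C) \le r \max_i v_i$.

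The main obstacle is converting these inequalities into a uniform upper bound on $|v|$; the naive combination $|v|-1 \le r \max_i v_i$ is vacuous. I would attack this by a compactness argument: if one had a sequence with $|v_n| \to \infty$, the normalized weights $\tilde v_n := v_n/|v_n|$ would subconverge to some $\tilde v^*$ in the simplex, and the inequalities above would force $\mathrm{ord}_{\tilde v^*}(C_\infty) = 1$ for a limiting divisor $C_\infty$, so the limit pair $(\PP^d, C_\infty)$ would have $\tilde v^*$ as an lc place; an ACC-type argument \`a la [\ref{HMX2}] applied to the bounded family of pairs $(\PP^d, C)$ with $\deg_H C \le r$ should then contradict the uniform $\epsilon$-lc hypothesis. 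Alternatively, one can compactify the blowup sequence to a projective toric variety on which $T$ is a torus-invariant prime divisor, run a toric MMP to a Mori fibre space (as in Subsection~\ref{ss-blowup-seq}), and apply the toric case of Theorem~\ref{t-BAB} known by [\ref{A-L-Borisov}] to bound $v$ directly.
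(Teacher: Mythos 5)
Your overall strategy --- push down to $\PP^d$ via Proposition~\ref{p-map-to-p^n}, view $T$ as a toric valuation with weight $v\in\Z_{\ge 1}^d$, bound the blowup length by $\mu_T\phi^*\Theta - 1$ via Lemma~\ref{l-coeff-tor-blowup}, and appeal eventually to toric BAB [\ref{A-L-Borisov}] --- is the right one and matches the paper's route in outline (the paper never works with the weight vector $v$ explicitly, but this is equivalent bookkeeping). However, there is a genuine gap that neither of your two closing alternatives addresses, and it is precisely the hard part of the argument.

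The issue is that after pushing $B$ to $C=\sum b_j\pi(B_j)$ on $\PP^d$, you only know $(\PP^d,C)$ is $\epsilon$-lc \emph{near $z$}. Away from $z$ the image $C$ need not even be a boundary (the map $\pi$ can crush components together and drive coefficients above $1$; the paper flags this explicitly after Lemma~\ref{l-etale-0}). So there is no global pair $(\PP^d,C)$ to which ACC \`a la [\ref{HMX2}] or a compactness argument in the Chow variety can be applied; the limit could be wildly singular away from $z$, and the $\epsilon$-lc information near $z$ alone does not control the toric MMP globally. Similarly, in your second alternative, before one can run a toric MMP on the compactified model and invoke [\ref{A-L-Borisov}], one must exhibit a global $\epsilon'$-lc boundary whose pullback to that model stays effective --- that is exactly what makes the eventual weak Fano bounded. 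Your proposal never produces such a boundary.

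The paper closes this gap by two steps you are missing. First it proves (Step 3 of the paper's proof, using Lemma~\ref{l-bnd-lct-no-boundary} applied to the strata of $(\PP^d,\Theta)$ together with inversion of adjunction) that there is $t>0$ depending only on $d,r$ with $(\PP^d,\Theta+tC)$ lc away from $z$; this is where the degree bound $\deg_{H_i}C\le r$ and the condition that $\Supp C$ contains no positive-dimensional stratum of $(\PP^d,\Theta)$ are used. It then forms the convex combination $D=(1-\tfrac t2)\Theta+\tfrac t2 C$ and checks by a case analysis on the centre that $(\PP^d,D)$ is globally $\epsilon'$-lc with $\epsilon'=\tfrac t2\epsilon$, while still $a(R,\PP^d,D)\le 1$ and $-(K_{\PP^d}+D)$ ample. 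Only then does the toric MMP argument apply, and even then the bound on the blowup length is extracted not ``directly from $v$'' but by producing a klt $n$-complement $K_{\PP^d}+\Omega$ of bounded index from the bounded weak Fano $W''$, observing that $(\PP^d,\Omega+u\Theta)$ is klt for a bounded $u$, and concluding $\mu_R\phi^*\Theta\le 1/u$. Your inequality $|v|-1\le r\max_i v_i$ being vacuous is a symptom of the fact that the local data at $z$ really is insufficient and one needs this global construction.

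A secondary remark: the plan to bound $\mathrm{ord}_v(C)$ by $r\max_i v_i$ and then rescale the weights is unlikely to close cleanly even formally, because the lc condition $\mathrm{ord}_v(C)\le|v|-\epsilon$ only constrains the $v$-weighted multiplicity, not $\max_i v_i$, and the degree bound alone permits $C$ to be concentrated along a coordinate hyperplane so that $\max_i v_i$ can be of the same order as $|v|$.
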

\begin{proof}
We will prove the proposition over $\C$. Over other fields we can either apply the Lefschetz principle 
or simply use formal neighbourhoods instead of analytic neighbourhoods in the arguments below.\\ 

\emph{Step 1.} 
From here until the end of Step 3, we will relate the problem to a similar problem 
on the projective space $\PP^d$. In this step we consider a finite morphism to $\PP^d$. 
Removing the components of $\Lambda$ not passing through $x$, we can assume 
$\Lambda=\sum_1^d S_i$ where $S_i$ are the irreducible components. 
Since $\deg_A \Lambda\le r$, $(X,\Lambda)$ belongs to a bounded family of pairs depending only on 
$d,r$. Thus replacing $A$ with a bounded multiple and replacing $r$ accordingly, 
we can assume $A-S_i$ is very ample for each $i$. Writing $B=\sum b_jB_j$ where $B_j$ are the distinct 
irreducible components,  by Proposition \ref{p-map-to-p^n}, there is a finite morphism 
$$
\pi\colon X\to \PP^d=\Proj \C[t_0,\dots,t_d]
$$ 
 mapping $x$ to the origin $z=(1:0:\cdots:0)$ and mapping $S_i$ onto the hyperplane $H_i$ 
defined by $t_i$. Moreover, $\pi$ is \'etale over $z$, $\Supp B$ contains no point of 
$\pi^{-1}\{z\}$ other than $x$, $\deg \pi={A^d}$, and $\deg_{H_i} C\le \deg_AB\le r$  where 
$C=\sum b_j\pi(B_j)$. In addition, by the proof of \ref{p-map-to-p^n}, $\pi^*H_i$ coincides with $S_i$ 
near $x$.\\

\emph{Step 2.}
In this step we consider the centre blowup sequence of $T$ and the induced sequence associated to $\PP^d$. 
By Lemma \ref{l-etale-0}, 
there exist analytic neighbourhoods $U$ and $V$ of $x$ and $z$, respectively, such that 
$\pi^{\rm an}|_U$ induces an analytic isomorphism between the analytic pairs $(U,B^{\rm an}|_U)$ and $(V,C^{\rm an}|_V)$. 
In particular, $(\PP^d,C)$ is $\epsilon$-lc near $z$.
Let $\Theta:=\sum_1^d H_i$. Since $\pi^*H_i$ coincides with $S_i$ 
near $x$, we also have an analytic isomorphism between 
$(U,\Lambda^{\rm an}|_U)$ and $(V,\Theta^{\rm an}|_V)$. Moreover, 
each stratum of $(\PP^d,\Theta)$ passes through $z$ and each one is the   
image of a stratum of $(X,\Lambda)$ passing through $x$.
Thus $\Supp C$ does not contain any stratum of $(\PP^d,\Theta)$, except possibly $z$: indeed, if 
$\Supp C$ contains a stratum $I$, then there is a stratum $J$ of $(X,\Lambda)$ passing through $x$ 
which maps onto $I$; by the above analytic isomorphisms, $J^{\rm an}|_U\subseteq \Supp B^{\rm an}|_U$ 
which implies $J\subseteq \Supp B$, hence $J=x$ and $I=z$.

The centre of $T$ on $X$, that is $x$, is an lc centre and a stratum of $(X,\Lambda)$. 
Let $X_1\to X_0=X$ be the blowup of $X$ along this centre. Let $K_{X_1}+\Lambda_1$ be 
the pullback of $K_X+\Lambda$. Then $(X_1,\Lambda_1)$ is log smooth with $\Lambda_1$ reduced 
and containing the exceptional divisor of the blowup. Moreover, 
the centre of $T$ on $X_1$ is an lc centre of $(X_1,\Lambda_1)$, 
hence a stratum of $(X_1,\Lambda_1)$. We blowup $X_1$ along the centre of $T$ and so on. 
Thus we get a sequence  
$$
Y=X_l\to \cdots \to X_0=X
$$ 
of centre blowups obtaining $T$ as the exceptional divisor 
of the last blowup (\ref{ss-blowup-seq} (3)). The sequence is toroidal with respect to $(X,\Lambda)$. 

 Since the above sequence starts with blowing up $x$, and since 
$(U,\Lambda^{\rm an}|_U)$ and $(V,\Theta^{\rm an}|_V)$ are analytically isomorphic, the sequence 
corresponds to a sequence of blowups 
$$
W=Z_l\to \cdots \to Z_0=\PP^d
$$ 
which is the sequence of centre blowups of $R$, the exceptional divisor of $Z_l\to Z_{l-1}$. 
The latter sequence starts with blowing up $z$ and it is 
toroidal with respect to $(\PP^d,\Theta)$, hence $a(R,\PP^d,\Theta)=0$. On the other hand, 
since $(U,B^{\rm an}|_U)$ and $(V,C^{\rm an}|_V)$ are analytically isomorphic, we get 
$$
a(R,\PP^d,C)=a(T,X,B)\le 1.
$$ 

Note that we cannot simply replace $X,B,\Lambda$ with $\PP^d, C,\Theta$ because 
we do not know whether $(\PP^d,C)$ is $\epsilon$-lc away from $z$.\\ 

\emph{Step 3.}
In this step we show that there is a positive real number $t\in (0,\frac{1}{2})$ depending only on $d,r$ 
such that $(\PP^d,\Theta+tC)$ is lc away from $z$. Pick a closed point $y\in \PP^d$ other than $z$.
If $y$ is not contained in $\Theta$, we can apply Lemma \ref{l-bnd-lct-no-boundary}
to find $t>0$ bounded from below away from zero so that $(\PP^d,tC)$ is klt, hence $(\PP^d,\Theta+tC)$ is klt near $y$. 
Now assume $y$ is contained in $\Theta$, hence it is contained in some stratum $G$ of $(\PP^d,\Theta)$ 
of minimal dimension.  Note that $G$ is positive-dimensional and $\deg_{H'}C|_G=\deg_HC\le r$ 
where $H'$ on $G$ is the restriction of a general hyperplane $H$. Moreover, $G$ is not inside $\Supp C$, by Step 2. 
 Applying Lemma \ref{l-bnd-lct-no-boundary} again, we find $t>0$ bounded from below away from zero 
such that $(G,tC|_G)$ is klt.
Thus by inversion of adjunction,  $(\PP^d,\Theta+tC)$ is lc near $y$ because in a neighbourhood of 
$y$ we have 
$$
K_G+tC|_G=(K_{\PP^d}+\Theta+tC)|_G. 
$$ 
This proves the existence of $t$.\\  

\emph{Step 4.} 
Letting $\epsilon'=\frac{t}{2}\epsilon$, 
in this step we construct a boundary $\Delta$ such that $(\PP^d,\Delta)$ is $\epsilon'$-lc, 
$K_{\PP^d}+\Delta\sim_\R 0$, and $a(R,\PP^d,\Delta)\le 1$. 
We start with 
an auxiliary boundary $D=(1-\frac{t}{2})\Theta+\frac{t}{2}C$.  
We will show that $(\PP^d,D)$ is $\epsilon'$-lc. 
Let $E$ be a prime divisor over $\PP^d$ and let $I$ be its centre on $\PP^d$. If $I$ passes 
through $z$, then by Lemma \ref{l-average-boundary},
$$
a(E,\PP^d,D)=(1-\frac{t}{2})a(E,\PP^d,\Theta)+\frac{t}{2}a(E,\PP^d,C)\ge \frac{t}{2}\epsilon=\epsilon'
$$ 
because $(\PP^d,C)$ is $\epsilon$-lc near $z$.
So assume $z\notin I$. In particular, $I$ is not a stratum of $(\PP^d,\Theta)$ because each stratum contains $z$. 
Thus $I$ is not an lc centre of $(\PP^d,\Theta)$. Then 
$$
a(E,\PP^d,D)\ge a(E,\PP^d,\Theta+\frac{t}{2}C)=
\frac{1}{2}a(E,\PP^d,\Theta+tC)+\frac{1}{2}a(E,\PP^d,\Theta)\ge \frac{1}{2}\ge \epsilon'
$$
because $(\PP^d,\Theta+tC)$ is lc near $I$, by Step 3, and because $a(E,\PP^d,\Theta)\ge 1$ as $E$ is not an 
lc place of $(\PP^d,\Theta)$.
Therefore, we have proved  $(\PP^d,D)$ is $\epsilon'$-lc.

By Step 2, 
$$
a(R,\PP^d,D)=(1-\frac{t}{2})a(R,\PP^d,\Theta)+\frac{t}{2}a(R,\PP^d,C)=\frac{t}{2}a(R,\PP^d,C) \le 1.
$$ 
Moreover, taking $t$ small enough in Step 3 we can assume 
$$
-(K_{\PP^d}+D)=-(1-\frac{t}{2})(K_{\PP^d}+\Theta)-\frac{t}{2}(K_{\PP^d}+C)
$$ 
is ample because $\deg_{H_i}-(K_{\PP^d}+\Theta)=1$ and because
$$
\deg_{H_i} -(K_{\PP^d}+C)= d+1-\deg_{H_i} C\ge d+1-r.
$$ 
Therefore, there is a boundary $\Delta\ge D$  so that 
$(\PP^d,\Delta)$ is $\epsilon'$-lc, $K_{\PP^d}+\Delta\sim_\R 0$, and $a(R,\PP^d,\Delta)\le 1$.\\

\emph{Step 5.}
In this step we show that there is a bounded number $n\in \N$ such that $K_{\PP^d}$ has a 
klt $n$-complement $K_{\PP^d}+\Omega$ with $a(R,\PP^d,\Omega)\le 1$.
Since $W\to \PP^d$ is a sequence of blowups toroidal with respect to $(\PP^d,\Theta)$, 
it is a sequence of toric blowups and $W$ is a toric variety. 
Let  $\psi\colon W'\to \PP^d$ be the weighted blowup given by $R$, that is, $\psi$ is an extremal 
birational contraction contracting a single prime divisor which is the birational transform of $R$.  
Let $K_{W'}+\Delta_{W'}$ be the pullback of $K_{\PP^d}+\Delta$. Then $\Delta_{W'}$ is effective 
as $a(R,\PP^d,\Delta)\le 1$. Moreover, $-K_{W'}$ is big by construction. 

Now run an MMP on $-K_{W'}$ and let $W''$ be the resulting model. Then $W''$ is a toric variety, 
and $-K_{W''}$ is nef and big. Thus $-K_{W''}$ defines a contraction $W''\to W'''$ to a toric Fano variety. 
Moreover, since  
$(W',\Delta_{W'})$ is $\epsilon'$-lc, $(W''',\Delta_{W'''})$ is $\epsilon'$-lc too. 
Thus $W'''$ is an $\epsilon'$-lc toric Fano variety. 
Now by [\ref{A-L-Borisov}], $W'''$ belongs to a bounded family of varieties 
depending only on $d,\epsilon'$. 
Therefore, there is a natural number $n>1$ depending only on $d,\epsilon'$ such that $|-nK_{W'''}|$ is base point free, 
in particular, $K_{W'''}$ has an $n$-complement $K_{W'''}+\Omega_{W'''}$ which is klt. This gives an $n$-complement 
$K_{W''}+\Omega_{W''}$ of $K_{W''}$ which in turn gives an $n$-complement 
$K_{W'}+\Omega_{W'}$ of $K_{W'}$ because $W'\bir W''$ is an MMP on $-K_{W'}$. Then   
we get an $n$-complement $K_{\PP^d}+\Omega$ of $K_{\PP^d}$
which is klt. By construction, 
$$
a(R,\PP^d,\Omega)=a(R,{W'},\Omega_{W'})\le 1.
$$\ 

\emph{Step 6.}
In this step we finish the proof by applying Lemma \ref{l-coeff-tor-blowup}.
Since $n\Omega$ is integral and $\deg_{H_i}\Omega=d+1$, the pair $(\PP^d,\Supp (\Omega+\Theta))$ belongs to 
a bounded family of pairs depending only on $d,n$. Therefore, there is a positive real number $u$ depending only on $d,n$ 
such that $(\PP^d,\Omega+u\Theta)$ is lc. Since $\Omega_{W'}\ge 0$, we deduce that 
the coefficient of the birational transform of $R$ in  $\psi^*\Theta$ is at most $\frac{1}{u}$ which in turn implies 
$\mu_R\phi^*\Theta\le\frac{1}{u}$ where $\phi$ denotes $W\to \PP^d$. 
On the other hand, since $W\to \PP^d$ is a sequence of centre blowups of $R$ which is toroidal with respect to $(\PP^d,\Theta)$, 
we have  $l+1\le \mu_R\phi^*\Theta$, by Lemma \ref{l-coeff-tor-blowup}. 
Therefore, $l\le p:=\rddown{\frac{1}{u}-1}$.

\end{proof}

\subsection{Bound on multiplicity at an lc place}
The next result bounds the multiplicity of divisors at lc places of a pair under suitable 
assumptions. An example of such boundedness is the boundedness of $\mu_R\phi^*\Theta$ that we obtained in Step 6 of the 
previous proof.

\begin{prop}\label{p-bnd-lct-toroidal-2}
Let $d,r,n$ be natural numbers and $\epsilon$ be a positive real number. 
Assume Theorem \ref{t-bnd-lct} holds in dimension $\le d-1$.
Then there is a positive number $q$ depending only on $d,r,n,\epsilon$ satisfying the following. 
Assume 
\begin{itemize}
\item  $(X,B)$ is a projective $\epsilon$-lc pair of dimension $d$, 

\item  $A$ is a very ample divisor on $X$ with $A^d\le r$, 

\item   $\Lambda\ge 0$ is a $\Q$-divisor on $X$ with $n\Lambda$ integral, 

\item  $L\ge 0$ is an $\R$-divisor on $X$,

\item  the divisors 
$$
\mbox{$A-B$,  $~~A-\Lambda$, and $~~A - L$}
$$ 
are all ample (in particular, we are assuming $B,\Lambda,L$ are all $\R$-Cartier),

\item $(X,\Lambda)$ is lc near a point $x$ (not necessarily closed), 

\item  $T$ is an lc place of $(X,\Lambda)$ with centre the closure of $x$, and

\item  $a(T,X,B)\le 1$.\\
\end{itemize}
Then for any resolution $\nu\colon U\to X$ so that $T$ is a divisor on $U$, we have $\mu_T\nu^*L\le q$.
\end{prop}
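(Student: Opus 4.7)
Set $C:=\overline{\{x\}}$ and $k:=\dim C$. Since $\mu_T\nu^*L$ is independent of the choice of resolution, I am free to pick a convenient $\nu$; the argument splits into the cases $k\ge 1$, handled by restricting to a general hyperplane section and invoking Theorem \ref{t-bnd-lct} in dimension $d-1$, and $k=0$, which requires a reduction to Proposition \ref{p-bnd-lct-toroidal-1}.

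\emph{Case $k\ge 1$.} Take a general $H\in|A|$; then $H$ meets $C$ in a proper subvariety of dimension $k-1$. A Bertini-type argument shows that $(H,B|_H)$ is $\epsilon$-lc, $A|_H$ is very ample on $H$ with $(A|_H)^{d-1}=A^d\le r$, and $A|_H-B|_H$, $A|_H-L|_H$ remain ample. Theorem \ref{t-bnd-lct} in dimension $d-1$ then supplies $t=t(d,r,\epsilon)>0$ with $\lct(H,B|_H,|L|_H|_\R)\ge t$, so $(H,B|_H+tL|_H)$ is lc. Choose a log resolution $\nu\colon U\to X$ of $(X,\Lambda\cup B\cup L)$ with $T$ a prime divisor on $U$; for general $H$, $\nu^*H$ equals the strict transform $H_U$, and $T_H:=T\cap H_U$ is a prime divisor over $H$. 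The generic choice of $H$ gives the adjunction identities
\[
a(T_H,H,B|_H)=a(T,X,B)\le 1,\qquad \mu_T\nu^*L=\mu_{T_H}\,(\nu|_{H_U})^*(L|_H),
\]
which combined with the lc inequality $a(T_H,H,B|_H+tL|_H)\ge 0$ give $\mu_T\nu^*L\le 1/t$.

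\emph{Case $k=0$.} Now $C=x$ is a closed point. Because $n\Lambda$ is integral with $\deg_A\Lambda\le r$, the coefficients of $\Lambda$ lie in $\{1/n,2/n,\ldots,1\}$ and $\deg_A\Supp\Lambda\le rn$; together with $A^d\le r$ this forces $(X,\Supp\Lambda)$ to lie in a bounded family depending only on $d,r,n$. I therefore pick a log resolution $\phi\colon Y\to X$ of $(X,\Supp\Lambda)$ in a bounded family, with $(Y,\Lambda_Y)$ log smooth --- where $\Lambda_Y$ is the reduced divisor supporting the birational transform of $\Supp\Lambda$ together with $\exc(\phi)$ --- and a very ample divisor $A_Y$ on $Y$ with $A_Y^d$ bounded in $d,r,n$. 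After possibly further toroidal blowups of $(Y,\Lambda_Y)$, I arrange that the centre of $T$ on $Y$ is a $0$-dimensional stratum $y$ of $\Lambda_Y$. A small generic perturbation of the pullback of $B$ by a general element of $|A_Y|$ yields an $\epsilon'$-lc boundary $B_Y'$ on $Y$ with $a(T,Y,B_Y')\le 1$ and with $\Supp B_Y'$ containing no stratum of $(Y,\Lambda_Y)$ other than $y$, so all hypotheses of Proposition \ref{p-bnd-lct-toroidal-1} are met. That proposition yields a sequence of centre blowups $Z\to Y$ of length $\le p=p(d,r,n,\epsilon)$ obtaining $T$, toroidal with respect to $(Y,\Lambda_Y)$. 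At each step the multiplicity of the total transform of $L$ at the new exceptional divisor equals its multiplicity along the smooth centre $C_i$, a stratum of a log smooth pair in a bounded family; as the total transform of $L$ has bounded $A_Y$-degree, these multiplicity increments are controlled uniformly, and iterating over $\le p$ steps yields $\mu_T\nu^*L\le q$ for some $q=q(d,r,n,\epsilon)$.

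\textbf{Main obstacle.} Case $k=0$ is the crux. The principal difficulties are (a) constructing a log smooth model $(Y,\Lambda_Y)$ of $(X,\Supp\Lambda)$ in a bounded family while preserving the log discrepancy inequality $a(T,X,B)\le 1$ after pullback; (b) perturbing $B$ so as to satisfy the stratum hypothesis of Proposition \ref{p-bnd-lct-toroidal-1} at only a small controlled loss in $\epsilon$; and (c) upgrading the length-of-sequence bound from that proposition to a genuine bound on $\mu_T\nu^*L$, which requires controlling the local multiplicity of $L$ along each successive blowup centre using only the bounded $A_Y$-degree of the pulled-back $L$.
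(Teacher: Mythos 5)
Your Case $k\ge 1$ is fine (using Theorem \ref{t-bnd-lct} in dimension $d-1$ directly on a general hyperplane section is a legitimate, slightly shorter variant of the paper's inductive Step 1), and your final point (c) --- bounding each increment $\mu_{E_{i+1}}$ of the pullback of $L$ by the multiplicity of $L$ along the centre $C_i$, hence by a degree bound on bounded models, over at most $p$ steps --- is essentially how the paper concludes. The gaps are in the middle of Case $k=0$. First, you cannot ``arrange that the centre of $T$ on $Y$ is a $0$-dimensional stratum'' by further toroidal blowups: for any birational morphism $Y'\to Y$ the centre of $T$ on $Y'$ maps \emph{onto} its centre on $Y$, so the dimension of the centre can only go up when passing to higher models. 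After the log resolution $\phi\colon Y\to X$ the centre of $T$ is some stratum of $(Y,\Lambda_Y)$ inside $\phi^{-1}\{x\}$, and it may well be positive-dimensional; no blowup will shrink it back to a point, so Proposition \ref{p-bnd-lct-toroidal-1} is simply not applicable on any higher model. The correct move (the paper's) is to return at this stage to the hyperplane-cut argument on $Y$ itself, which first requires manufacturing on $Y$ an \emph{effective} $\epsilon'$-lc boundary with $a(T,Y,\cdot)\le 1$ and bounded degree; note the crepant pullback $B_Y$ (with $K_Y+B_Y=\phi^*(K_X+B)$) is usually not effective, so one takes a convex combination $\alpha B_Y+(1-\alpha)\Lambda_Y$ --- another point your sketch omits.

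Second, a ``small generic perturbation of the pullback of $B$ by a general element of $|A_Y|$'' cannot produce the hypothesis of Proposition \ref{p-bnd-lct-toroidal-1} that $\Supp B_Y'$ contains no stratum of $(Y,\Lambda_Y)$ other than $y$. Adding a small multiple of a general ample divisor only enlarges the support, and $B$ is a fixed divisor, not a member of a linear system you may move; worse, after the convex combination with $\Lambda_Y$ needed for effectivity, the support contains \emph{every} stratum. Clearing the strata out of the support while keeping $\epsilon$-lc-ness, $a(T,\cdot)\le 1$ and a degree bound is the technical heart of the paper's argument: it uses Theorem \ref{t-bnd-lct} in dimension $\le d-1$ to show $(X,(1+t)B)$ is klt outside finitely many closed points, runs an MMP over $X$ which is an isomorphism away from finitely many points, replaces $B$ by the pushdown of a \emph{general} member of a semi-ample class (which removes all positive-dimensional strata from the support), and then eliminates the remaining zero-dimensional strata by a bounded number of point blowups combined again with convex combinations with $\Lambda$. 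You flag this as obstacle (b), but the mechanism you propose does not accomplish it, and without it (and without the fix for the first point) the reduction to Proposition \ref{p-bnd-lct-toroidal-1} does not go through.
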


We first treat a special case of the proposition. 

\begin{lem}\label{lem-1-for-p-bnd-lct-toroidal-2}
Assume that Proposition \ref{p-bnd-lct-toroidal-2} holds in dimension $d-1$. 
Then the proposition holds in dimension $d$ when $x$ is not a closed point. 
\end{lem}
\begin{proof}
Let $C$ be the closure of $x$. Take a general $H\in |A|$ and 
let 
$$
\mbox{$B_H=B|_H$, $~~A_H=A|_H$, $~~\Lambda_H=\Lambda|_H$, and $~~L_H=L|_H$.}
$$ 
Then by adjunction
$$
K_H+B_H=(K_X+B+H)|_H ~~\mbox{and}~~ K_H+\Lambda_H=(K_X+\Lambda+H)|_H. 
$$
Now take a log resolution 
$\phi\colon W\to X$ on which $T$ is a divisor and let $G=\phi^*H$. 
Pick a component $S$ of $G\cap T$ and let $R$ be its image on $X$. Then $R$ is contained in $H\cap C$.
In fact, considering the map $T\to C$ and taking into account the generality of $H$, we can assume that $R$ 
is a component of $H\cap C$. 

Now we have: 
\begin{itemize}
\item  $(H,B_H)$ is a projective $\epsilon$-lc pair of dimension $d-1$, 

\item  $A_H$ is a very ample divisor on $H$ with $A_H^{d-1}\le r$, 

\item   $\Lambda_H\ge 0$ is a $\Q$-divisor on $H$ with $n\Lambda_H$ integral, 

\item  $L_H\ge 0$ is an $\R$-divisor on $H$,

\item  the divisors 
$$
 \mbox{$A_H-B_H$, $~~A_H-\Lambda_H$, and $~~A_H - L_H$}
$$
are ample,

\item $(H,\Lambda_H)$ is lc near the generic point of $R$ (by the generality of $H$), 

\item  $S$ is an lc place of $(H,\Lambda_H)$ with centre $R$, and

\item  $a(S,H,B_H)\le 1$.
\end{itemize}

The last two points can be seen by considering the pullbacks $K_W+\Lambda_W$ and $K_W+B_W$ of  
$K_X+\Lambda$ and $K_X+B$, respectively, and noting that 
$$
(K_W+\Lambda_W+G)|_G ~~\mbox{and}~~ (K_W+B_W+G)|_G
$$
are the pullbacks of 
$$
K_H+\Lambda_H ~~\mbox{and}~~ K_H+B_H
$$
respectively, and that $\mu_T\Lambda_W=1$ while $\mu_TB_W\ge 0$.

Denote the induced contraction $G\to H$ by $\sigma$. 
Since we are assuming that Proposition \ref{p-bnd-lct-toroidal-2} holds in dimension $d-1$,  
the coefficient of $S$ in 
$\sigma^*(L_H)=(\phi^*L)|_G$ is bounded from above which implies the coefficient of $T$ in 
$\phi^*L$ is bounded from above too. Finally note that $\mu_T\phi^*L=\mu_T\nu^*L$ 
for any resolution $\nu\colon U\to X$ on which $T$ is a divisor.

\end{proof}

\begin{lem}\label{lem-2-for-p-bnd-lct-toroidal-2}
Assume that Proposition \ref{p-bnd-lct-toroidal-2} holds in dimension $d-1$. 
Then the proposition holds in dimension $d$ when $(X,\Lambda)$ is log smooth and $\Lambda$ is reduced.
\end{lem}
\begin{proof}
We will assume $d>1$ as the case $d=1$ holds trivially. We will use Lemma \ref{lem-1-for-p-bnd-lct-toroidal-2} 
so that we can assume $x$ is a closed point. The idea is then to apply Proposition \ref{p-bnd-lct-toroidal-1}. 
For this we need to modify the setting so that  
$\Supp B$ does not contain any stratum of $(X,\Lambda)$ other than $x$, and this occupies much of the proof.\\

\emph{Step 1.}
In this step we reduce the proposition to the case in which $x$ is a closed point and that 
$(X,\Lambda)$ has no other zero-dimensional stratum. Applying Lemma \ref{lem-1-for-p-bnd-lct-toroidal-2}, 
we can indeed assume that $x$ is a closed point.
Assume $(X,\Lambda)$ has another zero-dimensional 
stratum $y\neq x$. Let $X'\to X$ be the blowup of $X$ at $y$ and $E'$ be the exceptional divisor. Let
$K_{X'}+B'$, $K_{X'}+\Lambda', L'$, be the pullbacks of $K_X+B$, $K_X+\Lambda$, $L$, respectively.
Then
$$
\mu_{E'}B'\ge -d+1 ~~ \mbox{and} ~~\mu_{E'}\Lambda'=1.
$$ 
We can pick a 
very ample divisor $A'$ on $X'$ with bounded $A'^d$ such that 
$$
A'-B', ~~~A'-\Lambda', ~~~A'-L'
$$ 
are all ample.

Now there is $\beta\in(0,1)$ depending only on $d$ such that 
$$
B'':=\beta B'+(1-\beta)\Lambda'\ge 0.
$$   
Let $\Lambda''$ be the birational transform of $\Lambda$ and let $\epsilon'=\epsilon \beta$. 
 Replacing $A'$ with a bounded multiple we can assume 
$$
A'-B'', ~~~A'-\Lambda'', ~~~A'-L'
$$ 
are all ample, by definition of $B'',\Lambda''$.

Note that each stratum of $(X',\Lambda'')$ is the birational transform of a stratum of 
$(X,\Lambda)$. Now replacing $X,B,A,\Lambda, L,\epsilon$ with $X',B'',A',\Lambda'',L',\epsilon'$, and replacing 
$r$ appropriately, we can remove one of the zero-dimensional strata of $(X,\Lambda)$ other than $x$. 
Repeating this process a bounded number of times, we get to the situation in which $(X,\Lambda)$ 
has no zero-dimensional stratum other than $x$.\\

\emph{Step 2.}
In this step we find a positive real number $t$ depending only on $d,r,\epsilon$ such that 
$(X,B+tB)$ is $\frac{\epsilon}{2}$-lc outside finitely many closed points. 
Let $H$ be a general element of $|A|$ and let $A_H=A|_H$, $B_H=B|_H$, and $L_H=L|_H$. Then
\begin{itemize}
\item  $(H,B_H)$ is a projective $\epsilon$-lc pair of dimension $d-1$, 

\item  $A_H$ is a very ample divisor on $H$ with $A_H^{d-1}\le r$, and

\item  $A_H-B_H$ is ample.
\end{itemize}  
Thus since we are assuming Theorem \ref{t-bnd-lct} 
in dimension $\le d-1$, we find a positive real number $t$ 
depending only on $d,r,\epsilon$ such that $(H,B_H+2tB_H)$ is klt. This implies that $(X,H+B+2tB)$ is 
plt in a neighbourhood of $H$, by inversion of adjunction [\ref{kollar-mori}, Theorem 5.50] (note that 
[\ref{kollar-mori}, Theorem 5.50] assumes $B$ to be a $\Q$-divisor but the conclusion holds for 
$\R$-divisors as well). Therefore, $(X,B+2tB)$ is klt near $H$, hence 
$(X,B+2tB)$ is klt outside finitely many closed points because $H$ being general very ample it intersects 
every positive dimensional subvariety of $X$. 
In particular, $(X,B+tB)$ is $\frac{\epsilon}{2}$-lc
outside these finitely many closed points, by Lemma \ref{l-average-boundary}, because 
$$
K_X+B+tB=\frac{1}{2}(K_X+B)+\frac{1}{2}(K_X+B+2tB)
$$
and because $(X,B)$ is $\epsilon$-lc.\\

\emph{Step 3.}
In this step we take a resolution and introduce a boundary $\Gamma_V$.
Let $\psi \colon V\to X$ be a log resolution of $(X,B)$ on which $T$ is a divisor. 
Define a boundary 
$$
\Gamma_V=(1+t)B^{\sim}+(1-\frac{\epsilon}{4})\sum E_i+(1-a)T
$$
where $E_i$ are the exceptional divisors of $\psi$ other than $T$, $a=a(T,X,B)$, 
and $\sim$ denotes birational transform. Let $a_i=a(E_i,X,B)$. 
Since $(X,B)$ is $\epsilon$-lc and since $\mu_T\Gamma_V=1-a$, we have 
$$
K_V+\Gamma_V=K_V+B^\sim+\sum (1-a_i)E_i+(1-a)T+tB^\sim+\sum (a_i-\frac{\epsilon}{4})E_i
$$
$$
=\psi^*(K_X+B)+tB^{\sim}+F
$$
where 
$$
F:=\sum (a_i-\frac{\epsilon}{4})E_i
$$ 
is effective and exceptional over $X$ and its support does not contain $T$.
On the other hand, if 
$$
\mbox{$a'=a(T,X,B+tB)~~~$ and $~~~a_i'=a(E_i,X,B+tB)$},
$$ 
then we can write 
$$
K_V+\Gamma_V=K_V+(1+t)B^\sim+\sum (1-a_i')E_i+(1-a')T+\sum (a_i'-\frac{\epsilon}{4})E_i+(a'-a)T
$$
$$
=\psi^*(K_X+(1+t)B)+G
$$
where 
$$
G:=\sum (a_i'-\frac{\epsilon}{4})E_i+(a'-a)T
$$ 
is exceptional over $X$. Moreover, if  the 
image of $E_i$ on $X$ is positive-dimensional for some $i$, then $E_i$ is a component of $G$ with positive coefficient 
because $(X,B+tB)$ is $\frac{\epsilon}{2}$-lc outside finitely many closed points.\\ 

\emph{Step 4.}
In this step we run an MMP on $K_V+\Gamma_V$ over $X$ and study its outcome.
By construction, $(V,\Gamma_V)$ is klt.
Run an MMP on $K_V+\Gamma_V$ over $X$, let $Y$ be the resulting model, and  $\pi\colon Y\to X$ 
the corresponding morphism. Since $K_V+\Gamma_V\equiv G/X$ and $G$ is exceptional$/X$,
 the MMP contracts every component of $G$ with positive coefficient, by the negativity lemma. Thus 
$\pi$ is an isomorphism over the complement of finitely many closed points: indeed, by the last sentence of the previous 
step every $E_i$ with positive-dimensional centre on $X$ is a component of $G$ with 
positive coefficient so these are all contracted; also note that $X$ is smooth by assumption 
so it is $\Q$-factorial, hence the claim. 
Moreover, since 
$$
K_V+\Gamma_V\equiv tB^{\sim}+F/X
$$ 
and since $T$  is not a component of $tB^{\sim}+F$, we see that $T$ is not contracted by the MMP.\\

\emph{Step 5.}
In this step we introduce a divisor $D_Y$.
Let $A_Y$ be the pullback of $A$ to $Y$. 
By boundedness of the length of extremal rays [\ref{kawamata-bnd-ext-ray}] and by the base point free theorem, 
$K_{Y}+\Gamma_{Y}+3dA_{Y}$ 
is nef and big and semi-ample, globally. Pick a general 
$$
0\le D_{Y}\sim_\R \frac{1}{t}(K_{Y}+\Gamma_{Y}+3dA_{Y})
$$
with coefficients $\le 1-\epsilon$.
By Step 3, 
$$
K_{Y}+\Gamma_{Y}=\pi^*(K_X+B)+tB^{\sim}_Y+F_{Y},
$$
where $B^{\sim}_Y,F_Y$ denote the pushdowns of $B^{\sim},F$, respectively. 
Thus we get 
$$
\frac{1}{t}(K_{Y}+\Gamma_{Y}+3dA_{Y})=\frac{1}{t}\pi^*(K_X+B+3dA)+B^{\sim}_Y+\frac{1}{t}F_{Y}.
$$
Then we can write 
$$
D_Y=\pi^*H+B^\sim_Y+\frac{1}{t}F_Y
$$ 
for some 
$$
H\sim_\R \frac{1}{t}(K_X+B+3dA).
$$ 
Letting $D$ be the pushdown of $D_Y$ to $X$, we get $D=H+B$.\\

\emph{Step 6.}
In this step replacing $B$ with $D$ we reduce the proposition to the situation in which 
$\Supp B$ does not contain any stratum of $(X,\Lambda)$ other than $x$. 
First we show that $(X,D)$ is $\epsilon$-lc.
Write 
$$
K_Y+B^\sim_Y+R_Y=\pi^*(K_X+B)
$$
where $R_Y$ is exceptional over $X$. We then have 
$$
K_Y+R_Y+D_Y-\frac{1}{t}F_Y=K_Y+R_Y+B^\sim_Y+\pi^*H
$$
$$
=\pi^*(K_X+B+H)=\pi^*(K_X+D).
$$
Since $(X,B)$ is $\epsilon$-lc, $(Y,B^\sim+R_Y)$ is sub-$\epsilon$-lc, hence 
$$
(Y,R_Y+D_Y-\frac{1}{t}F_Y)
$$ 
is sub-$\epsilon$-lc because $D_Y$ is general semi-ample 
with coefficients $\le 1-\epsilon$ and $F_Y\ge 0$.  Therefore, $(X,D)$ is $\epsilon$-lc.  
Moreover, since $T$ is not a component of $B^\sim_Y+F_Y+D_Y$,
$$
a(T,X,D)=1-\mu_T(R_Y+D_Y-\frac{1}{t}F_Y)=1-\mu_TR_Y
$$
$$
=1-\mu_T(B^\sim+R_Y)=a(T,X,B)\le 1.
$$
 On the other hand, since 
$$
D\sim_\R \frac{1}{t}(K_X+B+3dA)+B,
$$
 there is a natural number $m$ depending only on $d,r,t$, so depending only on $d,r,\epsilon$, 
such that 
$$
3mA- D\sim_\R (mA-\frac{1}{t}K_X)+(mA-\frac{3d}{t}A)+(mA-\frac{1+t}{t}B)
$$ 
is ample because we can ensure that each bracket is an ample divisor. 

Since $\pi$ is an isomorphism over the complement of finitely many closed points and since $D_Y$ 
is semi-ample, we can assume that 
$\Supp D$ does not contain any positive-dimensional stratum of $(X,\Lambda)$.
Replacing $B$ with $D$, and then replacing $A$ with $3mA$ and replacing $r$ accordingly, 
we can assume that $\Supp B$ does not contain any positive-dimensional stratum of $(X,\Lambda)$. 
Since $x$ is the only zero-dimensional stratum of $(X,\Lambda)$, by Step 1, $\Supp B$ 
does not contain any stratum other than $x$.\\

\emph{Step 7.}
In this step we apply Proposition \ref{p-bnd-lct-toroidal-1} and finish the proof. All the assumptions of 
Proposition \ref{p-bnd-lct-toroidal-1} are satisfied in our setting, so there is a natural number $p$ depending only on 
$d,r,\epsilon$ such that $T$ can be obtained 
by a sequence of centre blowups
$$
\nu\colon U=X_l\to \cdots \to X_0=X,
$$ 
toroidal with respect to $(X,\Lambda)$, 
and of length $l\le p$. 

Since $(X,\Lambda)$ belongs to a bounded family of pairs, one can show inductively that all the $X_i$ are bounded. 
Moreover, we can assume that there is a very ample divisor $J$ on $U$ such that $J^d$ is bounded 
from above depending only on 
$d,r,\epsilon$ and that $J-\nu^*A$ is ample. 
In particular, since $\nu^*A-\nu^*L$ is nef, $J-\nu^*L$ is ample. Therefore, there is 
 a natural number $q$ depending only on 
$d,r,\epsilon$  such that 
$$
\mu_T\nu^*L\le \deg_J\nu^*L \le J^d \le q.
$$ 
Thus 
$\mu_T\nu^*L \le q$ if we replace $\nu$ with any other resolution 
on which $T$ is a divisor.

\end{proof}

\begin{proof} (of Proposition \ref{p-bnd-lct-toroidal-2})
Applying induction on dimension we can assume that the proposition holds in dimension $\le d-1$.
Let $X,B,A,L$, $\Lambda,x$ be as in the proposition in dimension $d$.
Replacing $A$ we can assume it is effective and that $A-(K_X+B)$ is ample (we use the latter 
below).
 Since $A-\Lambda$ is ample, $\deg_A\Lambda<A^d\le r$. 
Thus since $n\Lambda$ is integral, 
the couple $(X, \Supp(\Lambda+A))$ belongs to a bounded family 
of couples $\mathcal{P}$ depending only on $d,r,n$. Then there exists a log
bounded  family $\mathcal{Q}$ of log smooth pairs depending only on $d,r,n$ 
such that there exist a log resolution $\phi\colon W\to X$ of $(X,\Lambda)$ and a very ample divisor $A_W\ge 0$ 
so that 
\begin{itemize}
\item if $\Theta_W$ is the sum of the exceptional divisors of $\phi$ and the 
support of the birational transform of $\Lambda$, then $(W,\Theta_W+A_W)$ belongs to $\mathcal{Q}$, 

\item $A_W-\Theta_W$ and $A_W-\phi^*A$ are ample.
\end{itemize}
In particular, this means that $A_W^d$ is bounded from above.

Let $K_W+\Lambda_W$ be 
the pullback of $K_X+\Lambda$. Since $(X,\Lambda)$ is lc near $x$, 
$\Lambda_W\le \Theta_W$ over some neighbourhood of $x$, hence 
$$
0=a(T,X,\Lambda)=a(T,W,\Lambda_W)\ge a(T,W,\Theta_W)\ge 0
$$ 
which shows that $T$ is an lc place of $(W,\Theta_W)$. Moreover, if $C$ is the centre of $T$ on $W$ 
and if $w$ is its generic point, 
then $\Lambda_W=\Theta_W$ near $w$.

Let $K_W+B_W$ be the pullback of $K_X+B$. We claim that the coefficients of $B_W$ are bounded from below.  
Writing $K_W+J_W=\phi^*K_X$, we see that $J_W\le B_W$, so it is enough to show that 
the coefficients of $J_W$ are bounded from below. Note that since both $K_X+B$ 
and $B$ are $\R$-Cartier by assumption, $K_X$ is $\Q$-Cartier, so $\phi^*K_X$ makes sense.   
Since $(X,\Supp \Lambda)$ belongs to bounded family of couples, we can indeed choose $\phi$ so that the coefficients of 
$J_W$ belong to a fixed finite set, hence in particular they are bounded from below.

Now there is a fixed $\alpha\in(0,1)$ such that 
$$
\Delta_W:=\alpha B_W+(1-\alpha)\Theta_W\ge 0
$$
because the coefficients of $B_W$ are bounded from below and each exceptional$/X$ 
prime divisor has coefficient $1$ in $\Theta_W$.  

Let $\delta= \alpha \epsilon$. Since $(W,B_W)$ is sub-$\epsilon$-lc and $(W,\Theta_W)$ 
is lc, the pair $(W,\Delta_W)$ is $\delta$-lc, by Lemma \ref{l-average-boundary}. Moreover, 
since 
$$
a(T,W,\Theta_W)=0,
$$
we have 
$$
a(T,W,\Delta_W)=\alpha a(T,W,B_W)+(1-\alpha)a(T,W,\Theta_W)
$$
$$
=\alpha a(T,W,B_W)=\alpha a(T,X,B) \le 1.
$$ 

On the other hand, by the above and by assumption, $A_W-\Theta_W$ is ample, and $\phi^*A-(K_W+B_W)$ and $\phi^*A-L_W$ are nef 
where $L_W$ is the pullback of $L$. Since $A_W-\phi^*A$ is ample, we deduce that 
$A_W-(K_W+B_W)$ and $A_W-L_W$ are ample. Moreover, replacing $A_W$ with a bounded multiple we can assume that  $\frac{1}{2}A_W+K_W$ and $\frac{1}{2}A_W-(K_W+B_W)$ are ample. In particular,  
${A_W}- B_W$ is ample, hence 
$$
{A_W} -\Delta_W=\alpha (A_W-B_W)+(1-\alpha)(A_W-\Theta_W)
$$ 
is ample too.

Now after replacing $\epsilon$ with $\delta$ 
and replacing $r$ appropriately, we can replace $X,B,A,L$, $\Lambda,x$ with 
$W,\Delta_W,A_W,L_W,\Theta_W,w$, respectively. In particular, we can assume that $(X,\Lambda)$ is log smooth 
with $\Lambda$ reduced. We are then done by Lemma \ref{lem-2-for-p-bnd-lct-toroidal-2}.

\end{proof}

\subsection{Construction of $\Lambda$}

We want to apply Proposition \ref{p-bnd-lct-toroidal-2} to prove 
Theorem \ref{t-bnd-lct}. The proposition assumes existence of an extra divisor 
$\Lambda$ which helps to eventually reduce the problem to the case of toric varieties via \ref{p-bnd-lct-toroidal-1}. 
Next, we will use complements (as in Theorem \ref{t-compl-near-lcc}) to get the required divisor $\Lambda$.

\begin{prop}\label{p-finding-Lambda}
Let $d,r$ be natural numbers and $\epsilon$ be a positive real number. 
Assume Theorem \ref{t-bnd-lct} holds in dimension $\le d-1$.
Then there exist natural numbers $n,m$ and a positive real number $\epsilon'<\epsilon$  
depending only on $d,r,\epsilon$ satisfying the following. 
Assume 
\begin{itemize}

\item $(X,B)$ is a projective $\epsilon$-lc pair of dimension $d$, 

\item $A$ is a very ample divisor on $X$ with $A^d\le r$, 

\item $L\ge 0$ is an $\R$-divisor on $X$, 

\item $A -B$ and $A - L$ are ample (so we are assuming that $B$ and $L$ are $\R$-Cartier), 

\item $(X,B+tL)$ is $\epsilon'$-lc for some $t\in (0,r)$, 

\item we have 
$$
a(T,X,B+tL)=\epsilon'
$$ 
for some prime divisor $T$ over $X$, and

\item the centre of $T$ on $X$ is a closed point $x$.\\
\end{itemize}
Then there is a $\Q$-Cartier $\Q$-divisor $\Lambda\ge 0$ such that 
\begin{itemize}

\item $n\Lambda$ is integral, 

\item $mA- \Lambda$ is ample, 

\item $(X,\Lambda)$ is lc near $x$,  and 

\item $T$ is an lc place of $(X,\Lambda)$.
\end{itemize}
\end{prop}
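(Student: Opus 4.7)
The plan is to extract the divisor $T$ on a birational model and then apply Theorem~\ref{t-compl-near-lcc} to produce $\Lambda$ as a pushforward of a bounded complement. Since $a(T,X,B+tL)=\epsilon'<1$ and the centre of $T$ on $X$ is the closed point $x$, the divisor $T$ is exceptional over $X$. The auxiliary boundary $\Lambda_0:=\frac{1}{1-\epsilon'}(B+tL)$ makes $(X,\Lambda_0)$ lc with $T$ as an lc place, using the $\epsilon'$-lc hypothesis on $(X,B+tL)$. A standard MMP argument (as in Lemma~\ref{l-plt-blowup}) then yields an extremal divisorial contraction $\phi\colon Y\to X$ extracting exactly $T$, with $(Y,T)$ plt, $-(K_Y+T)$ being $\phi$-ample, and $(Y,0)$ being $\Q$-factorial klt.

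Next, I would set $M:=\phi^*(kA)$ for a natural number $k$. Since $\phi$ contracts $T$ to $x$, we have $M|_T\sim 0$, and $M$ is semi-ample Cartier. The main technical step is to choose $k$ bounded in terms of $d,r,\epsilon$ so that $M-(K_Y+T)=\phi^*(kA-K_X)-a(T,X,0)\,T$ is ample on $Y$. The argument combines three ingredients: boundedness of $(X,A)$ (from $A^d\le r$ with $A$ very ample), which bounds $-K_X\cdot A^{d-1}$ and makes $kA-K_X$ ample on $X$ for $k$ bounded in terms of $d,r$; control of $a(T,X,0)$ via the identity $a(T,X,0)=\epsilon'+\mu_T\phi^*(B+tL)$ together with the ampleness of $(1+t)A-(B+tL)$ and the $\epsilon'$-lc condition, which bounds $\mu_T\phi^*(B+tL)$; and Kleiman's criterion combined with the $\phi$-ampleness of $-T$, which propagates the pullback ampleness globally on $Y$.

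With such a $k$, Theorem~\ref{t-compl-near-lcc} applied with $\mathfrak{R}=\{1\}$ and boundary $T$ yields a natural number $n$ depending only on $d$ and an effective divisor $0\le G\sim (n+1)M-n(K_Y+T)$ satisfying $\mu_T G=0$ such that $(Y,T+\frac{1}{n}G)$ is lc over an analytic neighbourhood of $x$. Setting $\Lambda:=\phi_*(\frac{1}{n}G)$, pushing forward gives $K_X+\Lambda\sim_\Q\frac{(n+1)k}{n}A$, hence $\phi^*(K_X+\Lambda)\sim_\Q\frac{n+1}{n}M\sim_\Q K_Y+T+\frac{1}{n}G$ on $Y$; their difference is a $\phi$-exceptional divisor $eT$ that is $\Q$-linearly trivial on $Y$, forcing $e=0$ since the class $[T]$ generates $N^1(Y/X)$. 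It follows that $n\Lambda=\phi_*G$ is integral, $(X,\Lambda)$ is lc near $x$, $T$ is an lc place of $(X,\Lambda)$, and $mA-\Lambda$ is ample for some $m$ bounded in terms of $d,r,\epsilon$, again using the bounded-family property to dominate $K_X$.

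The main obstacle is establishing the uniform bound on $k$ making $M-(K_Y+T)$ ample; once this is in hand, the rest is complement bookkeeping and pushforward. This bound requires controlling $a(T,X,0)$ uniformly, for which one must combine the full strength of the $\epsilon'$-lc hypothesis on $(X,B+tL)$, the ampleness of $(1+t)A-(B+tL)$, and the bounded-family structure of $(X,A)$ arising from $A^d\le r$.
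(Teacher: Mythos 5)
Your proposal correctly identifies the overall architecture: extract $T$ on a model $Y\to X$, show that (a fixed multiple of) $A_Y-(K_Y+T)$ is ample with a bounded constant, then apply Theorem~\ref{t-compl-near-lcc} with $\mathfrak{R}=\{1\}$, and push forward. However, there is a genuine gap at what you yourself flag as the main technical step.

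You write that $\mu_T\phi^*(B+tL)$ can be bounded from above using the identity $a(T,X,0)=\epsilon'+\mu_T\phi^*(B+tL)$ together with ampleness of $(1+t)A-(B+tL)$ and the bounded-family structure. This does not follow. Ampleness of $(1+t)A-(B+tL)$ only bounds the degree $\deg_A(B+tL)$, a global intersection number; it gives no control over the multiplicity of a pullback along an exceptional divisor $T$ living deep in a tower of blowups. In fact, bounding such pullback multiplicities is precisely the content of Proposition~\ref{p-bnd-lct-toroidal-2}, and that proposition already \emph{requires} the divisor $\Lambda$ you are trying to construct here as an input --- so one cannot invoke it without circularity. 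There is no elementary reason why $a(T,X,0)$ should be bounded at this stage.

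The paper's proof carefully sidesteps the need for any such bound. After extracting $T$ (via an MMP on a log resolution with the auxiliary boundary $\Gamma_W$, together with boundedness of extremal rays to make $K_Y+\Gamma_Y+3dA_Y$ ample), the ampleness of $lA_Y-(K_Y+T)$ for bounded $l$ is obtained from the three-term decomposition
$$
3lA_Y-(K_Y+T)=\bigl(lA_Y-(K_Y+(1-\epsilon')T)\bigr)+\bigl(lA_Y-\alpha\phi^*(B+tL)\bigr)+\bigl(lA_Y+\alpha(B^{\sim}+tL^{\sim})\bigr),
$$
where $\alpha$ is chosen so that $\alpha\,\mu_T\phi^*(B+tL)=\epsilon'$. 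The crucial point is that $\alpha\le \frac{\epsilon'}{\epsilon-\epsilon'}$ is bounded from \emph{above} --- because $\mu_T\phi^*(tL)\ge\epsilon-\epsilon'$ gives a \emph{lower} bound on $\mu_T\phi^*(B+tL)$ --- and then each of the three summands is nef or ample for $l$ bounded in terms of $d,r,\epsilon$. Thus the argument only ever uses a lower bound on $\mu_T\phi^*(B+tL)$, never an upper bound. Your proposal has the logic exactly backwards at this point.

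A minor additional error: the auxiliary pair $(X,\Lambda_0)$ with $\Lambda_0=\frac{1}{1-\epsilon'}(B+tL)$ is not lc with $T$ an lc place in general, because rescaling a boundary does not rescale log discrepancies linearly (the contribution of $\phi^*K_X-K_W$ to $\mu_T$ is unaffected by the rescaling). This can be repaired, and the extraction of $T$ is standard anyway, but it is symptomatic of the same confusion between multiplicity and discrepancy that underlies the main gap.
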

\begin{proof}
We can assume that $d>1$ as the proposition holds trivially in dimension one.
On the other hand, by the ACC for lc thresholds [\ref{HMX2}], there exists a real 
number $\epsilon'\in(0,\epsilon)$,  
depending only on $d$, so that if $(Y,(1-\epsilon')S)$  is a 
 klt pair of dimension $d$ where $S$ is reduced and $\Q$-Cartier, then $(Y,S)$ is lc. 
Let $X,B,A,L,T,t$ be as in the proposition with $\epsilon'$ as in the previous sentence.\\

\emph{Step 1.}
In this step we find a positive real number $v$ 
depending only on $d,r,\epsilon$ such that
$$
(X,B+tL+v(B+tL))
$$ 
is $\frac{\epsilon'}{2}$-lc outside finitely many closed points.
Since $t< r$,
$$
(r+1)A-B-tL=A-B+t(A-L)+(r-t)A
$$ 
is ample, so replacing $A$ with $(r+1)A$ and replacing $r$ accordingly we can assume that $A-B-tL$ is ample. 

Let $H$ be a general element of $|A|$ and let $A_H=A|_H$, $B_H=B|_H$, and $L_H=L|_H$. Then 
\begin{itemize}
\item  $(H,B_H+tL_H)$ is a projective $\epsilon'$-lc pair of dimension $d-1$, 

\item  $A_H$ is a very ample divisor on $H$ with $A_H^{d-1}\le r$, and

\item  $A_H-B_H-tL_H$ is ample.
\end{itemize}  
Thus since we are assuming Theorem \ref{t-bnd-lct} 
in dimension $\le d-1$, we find a positive real number $v$ 
depending only on $d,r,\epsilon'$ such that 
$$
(H,B_H+tL_H+2v(B_H+tL_H))
$$ 
is klt. As $\epsilon'$ was picked depending only on $d$, we can choose $v$ depending only on $d,r,\epsilon$. 
Now
$$
(X,H+B+tL+2v(B+tL))
$$ 
is plt in a neighbourhood of $H$, by inversion of adjunction [\ref{kollar-mori}, Theorem 5.50]. 
Therefore, 
$$
(X,B+tL+2v(B+tL))
$$ 
is klt near $H$, hence it is klt outside finitely many closed points. 
In particular, 
$$
(X,B+tL+v(B+tL))
$$ 
is $\frac{\epsilon'}{2}$-lc outside finitely many closed points, 
by Lemma \ref{l-average-boundary}, because 
$$
K_X+B+tL+v(B+tL)=\frac{1}{2}(K_X+B+tL)+\frac{1}{2}(K_X+B+tL+2v(B+tL))
$$
and because $(X,B+tL)$ is $\epsilon'$-lc.\\

\emph{Step 2.}
In this step we take a log resolution $W\to X$ and define a boundary $\Gamma_W$ and study some of its properties.
Let $\psi\colon W\to X$ be a log resolution of $(X,B+tL)$ so that $T$ is a divisor on $W$.
Define a boundary 
$$
\Gamma_W=(1+v)(B^{\sim}+tL^{\sim})+(1-\frac{\epsilon'}{4})\sum E_i+(1-\epsilon')T
$$
where $E_i$ are the exceptional divisors of $\psi$ other than $T$, and $\sim$ denotes birational transform. 
Let 
$$
c_i=a(E_i,X,B+tL).
$$ 
Since 
$$
\epsilon'=a(T,X,B+tL),
$$
we can write  
$$
K_W+\Gamma_W
=K_W+B^\sim+tL^\sim+\sum (1-c_i)E_i+(1-\epsilon')T+v(B^\sim+tL^\sim)+\sum (c_i-\frac{\epsilon'}{4})E_i
$$
$$
=\psi^*(K_X+B+tL)+v(B^{\sim}+tL^{\sim})+F
$$
where 
$$
F:=\sum (c_i-\frac{\epsilon'}{4})E_i
$$ 
is exceptional over $X$ and its support does not contain $T$.
Moreover, since $(X,B+tL)$ is $\epsilon'$-lc, we have $c_i\ge \epsilon'$ for every $i$, so $F$ is effective. 

On the other hand, letting 
$$
c_i'=a(E_i,X,(1+v)(B+tL))
$$ 
and 
$$
a'=a(T,X,(1+v)(B+tL))
$$
we can write 
$$
K_W+\Gamma_W
=K_W+(1+v)(B^\sim+tL^\sim)+\sum (1-c_i')E_i+(1-a')T+\sum (c_i'-\frac{\epsilon'}{4})E_i+(a'-\epsilon')T
$$
$$
=\psi^*(K_X+(1+v)(B+tL))+G
$$
where 
$$
G:=\sum (c_i'-\frac{\epsilon'}{4})E_i+(a'-\epsilon')T
$$ 
is exceptional over $X$. Moreover, since
$$
(X,(1+v)(B+tL))
$$ 
is $\frac{\epsilon'}{2}$-lc outside finitely many closed points, 
if the image of $E_i$ on $X$ is positive-dimensional for some $i$, 
then $c_i'\ge \frac{\epsilon'}{2}$ so $E_i$ is a component of $G$ with positive coefficient.\\

\emph{Step 3.}
In this step we run an MMP on $K_W+\Gamma_W$ over $X$ and argue that it does not contract $T$.
By construction, $(W,\Gamma_W)$ is klt; here we are using the fact that the coefficients of
 $(1+v)(B^{\sim}+tL^{\sim})$ are less than $1$ because 
$$
(X,(1+v)(B+tL))
$$ 
is $\frac{\epsilon'}{2}$-lc outside finitely many closed points.

Run an MMP on $K_W+\Gamma_W$ over $X$ and let $Y'$ be the resulting model. Since 
$K_W+\Gamma_W\equiv G/X$ and since $G$ is exceptional over $X$ by Step 2, 
applying the negativity lemma shows that the MMP contracts every component of $G$ with positive coefficient. 
In particular, every $E_i$ with positive-dimensional image on $X$ is contracted, by Step 2. 
Thus the induced morphism $Y'\to X$ is a small morphism over the complement of finitely many closed points. 
On the other hand, 
by Step 2,   
$$
K_W+\Gamma_W\equiv v(B^{\sim}+tL^{\sim})+F/X 
$$
and $T$ is not a component of 
$$
v(B^{\sim}+tL^{\sim})+F\ge 0,
$$
 so the MMP does not contract $T$.

Let $A_{Y'}$ be the pullback of $A$. 
By boundedness of the length of extremal rays [\ref{kawamata-bnd-ext-ray}] and by the 
base point free theorem, $K_{Y'}+\Gamma_{Y'}+2dA_{Y'}$ 
is semi-ample, globally. Note that over the complement of a finite set of closed points on $X$, 
$K_{Y'}+\Gamma_{Y'}+2dA_{Y'}$ is the pullback of 
$$
K_X+(1+v)(B+tL)+2dA.
$$\ 

\emph{Step 4.}
In this step we consider a model $Y$ on which $-T_{Y}$ is ample over $X$. 
 Since $(X,B+tL)$ is $\epsilon'$-lc and 
$$
a(T,X,B+tL)=\epsilon'<1,
$$
there is a birational contraction $Y''\to X$ extracting $T$ but no other divisor where $Y''$ is normal and $\Q$-factorial. 
We denote the centre of $T$ on $Y''$ by $T_{Y''}$.
Then $Y''$ is of Fano type over $X$, so there is an ample model $Y$ of $-T_{Y''}$ 
over $X$: that is, we run an MMP on $-T_{Y''}$ to get a semi-ample over $X$ divisor and 
then take the associated contraction to get $Y$. The induced birational morphism 
$\phi\colon Y\to X$ contracts $T_Y$ but no other divisors and any curve contracted 
by $\phi$ is inside $T_Y$ as $-T_Y$ is ample over $X$. 
In particular, since $T_{Y}$ is mapped to $x$, $\phi$ is an isomorphism 
over the complement of $x$ in $X$. 
By construction, the induced map $Y\bir Y'$ 
does not contract any divisor.\\

\emph{Step 5.}
In this step we show that $K_Y+\Gamma_Y+3dA_Y$ is ample where 
 $K_Y+\Gamma_Y,A_Y$ are the pushdowns of $K_{Y'}+\Gamma_{Y'},A_{Y'}$. 
  First note that, by Step 2, 
$$
K_{Y}+\Gamma_Y\sim_\R G_Y=(a'-\epsilon')T_Y/X
$$
which in particular means that $K_Y+\Gamma_Y$ is $\R$-Cartier. It is obvious that $A_Y$ is 
$\Q$-Cartier. Thus $K_Y+\Gamma_Y+3dA_Y$ is $\R$-Cartier.

 Since 
$$
a(T,X,B)\ge \epsilon > \epsilon'= a(T,X,B+tL)
$$
we get 
$$
\mu_{T_Y}\phi^*tL\ge \epsilon-\epsilon'>0.
$$ 
Thus 
$$
a'=a(T,X,(1+v)(B+tL))<\epsilon'=a(T,X,B+tL).
$$
Therefore, from 
$$
K_Y+\Gamma_Y+2dA_Y\sim_\R (a'-\epsilon')T_Y/X
$$ 
we deduce that $K_Y+\Gamma_Y+2dA_Y$ is ample over $X$ because $-T_Y$ is ample over $X$.

Now let $C$ be a curve on $Y$ that is not contracted over $X$. Let $c$ be a general closed point of $C$.
By Steps 2 and 3, 
$$
K_{Y'}+\Gamma_{Y'}+2dA_{Y'}\sim_\R G_{Y'}/X
$$
and $G_{Y'}=0$ over the complement of finitely many closed points. Thus since 
$K_{Y'}+\Gamma_{Y'}+2dA_{Y'}$ is semi-ample by Step 3 and since it is $\sim_\R 0$ over a neighbourhood 
of $\phi(c)$, we can find 
$$
0\le P_{Y'}\sim_\R K_{Y'}+\Gamma_{Y'}+2dA_{Y'}
$$
such that $P_{Y'}$ does not intersect the fibre of $Y'\to X$ over $\phi(c)$. So the pushdown of $P_{Y'}$ 
to $X$ does not contain $\phi(c)$.  
On the other hand, by Step 4, $\phi\colon Y\to X$ is an isomorphism over $\phi(c)$, so $P_Y$ 
does not contain $c$ where $P_Y$ is the pushdown of $P_{Y'}$ to $Y$. 
Then  
$$
(K_Y+\Gamma_Y+2dA_Y)\cdot C=P_Y\cdot C\ge 0.
$$ 
Therefore, $K_Y+\Gamma_Y+2dA_Y$ is nef globally while being ample over $X$. Since $A_Y$ is the pullback of the 
ample divisor $A$, it follows that $K_Y+\Gamma_Y+3dA_Y$ is ample.\\
 
\emph{Step 6.} 
In this step we show that there is a natural number $l$ depending only on $d,r,\epsilon$ such that 
$$
lA_Y-(K_Y+(1-\epsilon')T_Y)
$$ 
is ample. Since 
$$
a(T,X,B+tL)=\epsilon',
$$
we have 
$$
K_Y+B^{\sim}_Y+tL^{\sim}_Y+(1-\epsilon')T_Y=\phi^*(K_X+B+tL),
$$
where $B^{\sim}_Y,L^{\sim}_Y$ are the pushdowns of $B^{\sim},L^{\sim}$.
Hence for any $l$ we have
$$
lA_Y-(K_Y+(1-\epsilon')T_Y)=lA_Y-\phi^*(K_X+B+tL)+B^{\sim}_Y+tL^{\sim}_Y
$$
$$
=(l-\frac{3d}{v})A_Y-(1+\frac{1}{v})\phi^*(K_X+B+tL)+
\frac{1}{v}\phi^*(K_X+B+tL)+B^{\sim}_Y+tL^{\sim}_Y+\frac{3d}{v}A_Y.
$$
This in particular shows that 
$$
lA_Y-(K_Y+(1-\epsilon')T_Y)
$$
is $\R$-Cartier because $B^{\sim}_Y,L^{\sim}_Y$ are $\R$-Cartier which in turn follow from 
the fact that $\phi^*B,\phi^*L,T_Y$ are all $\R$-Cartier. 

By assumption, $A- B$ and $A- L$ are ample. Moreover, since $A$ is very ample and $A^d\le r$, 
$\beta A-K_X$ is ample for some bounded natural number $\beta$ depending only on $d,r$.
Thus we can choose $l$ depending only on 
$d,r,\epsilon$ so that 
$$
(l-\frac{3d}{v})A_Y-(1+\frac{1}{v})\phi^*(K_X+B+tL)=\phi^*((l-\frac{3d}{v})A-(1+\frac{1}{v})(K_X+B+tL))
$$
is nef where we also used the assumption $t\le r$ and that $v,\beta$ depend only on $d,r,\epsilon$. 
On the other hand, by Step 2 and the fact that $F$ is contracted over $Y$, 
$$
K_Y+\Gamma_Y=\phi^*(K_X+B+tL)+v(B^{\sim}_Y+tL^{\sim}_Y),
$$
hence by Step 5, 
$$
\frac{1}{v}(K_Y+\Gamma_Y+3dA_Y)=\frac{1}{v}\phi^*(K_X+B+tL)+B^{\sim}_Y+tL^{\sim}_Y+\frac{3d}{v}A_Y
$$
is ample. Therefore,  
$$
lA_Y-(K_Y+(1-\epsilon')T_Y)
$$ 
is ample by the previous paragraph.\\

 \emph{Step 7.}
 In this step we show that after replacing $l$ with a bounded multiple we can ensure that $lA_Y-(K_Y+T_Y)$ is  ample. 
By Step 5, we have $\mu_{T_Y}\phi^*tL\ge \epsilon-\epsilon'$. Thus there is a positive real number 
$\alpha\le \frac{\epsilon'}{\epsilon-\epsilon'}$ such that 
$$
\alpha\mu_{T_Y} \phi^*(B+tL)=\epsilon'.
$$ 
Then  
$$
\alpha \phi^*(B+tL)=\alpha(B^{\sim}_Y+tL^{\sim}_Y)+  \epsilon' T_Y.
$$
Thus we have 
$$
3lA_Y-(K_Y+T_Y)=3lA_Y-(K_Y+(1-\epsilon')T_Y)-\epsilon' T_Y
$$
$$
=3lA_Y-(K_Y+(1-\epsilon')T_Y)-\alpha \phi^*(B+tL)+\alpha(B^{\sim}_Y+tL^{\sim}_Y)
$$
$$
=(lA_Y-(K_Y+(1-\epsilon')T_Y))+(lA_Y-\alpha\phi^*(B+tL))+(lA_Y+\alpha(B^{\sim}_Y+tL^{\sim}_Y)). 
$$

We argue that we can replace $l$ with a bounded multiple so that $3lA_Y-(K_Y+T_Y)$
is ample. By Step 6, 
$$
lA_Y-(K_Y+(1-\epsilon')T_Y)
$$
is ample.  Moreover, if 
$$
l\ge \frac{(1+r)\epsilon'}{\epsilon-\epsilon'},
$$
then $l\ge {(1+t)}{\alpha}$, hence    
$$
lA_Y-\alpha\phi^*(B+tL)
$$ 
is nef because 
$$
lA-\alpha(B+tL)=(l-(1+t)\alpha) A+\alpha(A-B+tA-tL)
$$ 
is ample. In addition, by Step 6, we can write 
$$
lA_Y+\alpha(B^{\sim}_Y+tL^{\sim}_Y)
=lA_Y+\alpha (\frac{1}{v}(K_Y+\Gamma_Y+3dA_Y)-\frac{1}{v}\phi^*(K_X+B+tL)-\frac{3d}{v}A_Y)
$$
$$
=(l-\frac{3d\alpha}{v})A_Y-\frac{\alpha}{v}\phi^*(K_X+B+tL)
+\frac{\alpha}{v} (K_Y+\Gamma_Y+3dA_Y)
$$
which shows that 
$$
lA_Y+\alpha(B^{\sim}_Y+tL^{\sim}_Y)
$$
is ample if $l$ is large enough depending only on $d,r,\epsilon$ remembering from Step 5 that $K_Y+\Gamma_Y+3dA_Y$ is ample. 
Therefore, taking  $l$ large enough and then replacing it with $3l$, we can assume $lA_Y-(K_Y+T_Y)$ is ample.\\ 

\emph{Step 8.} 
In this step we finish the proof by applying Theorem \ref{t-compl-near-lcc}.
The pair $(Y,(1-\epsilon')T_Y)$ is klt because  $(X,B+tL)$ is klt and 
$$
a(T,X,B+tL)=\epsilon'.
$$ 
Thus the pair $(Y,T_Y)$ is lc, by our choice of $\epsilon'$.
Moreover, $A_Y|_{T_Y}\sim 0$ since $T_Y$ is mapped to the closed point $x$. 
Now applying Theorem \ref{t-compl-near-lcc} (by taking $M=lA_Y$, $S=T_Y$, and $z=x$), 
there is a natural number $n$ depending only on $d$ and  
there is a $\Q$-divisor $\Lambda_Y\ge T_Y$ such that 
$
(Y,\Lambda_Y)
$ 
is lc over $x$ and 
$$
n(K_Y+\Lambda_Y)\sim (n+2)lA_Y.
$$ 
Let $\Lambda$ be the pushdown of $\Lambda_Y$. Then 
$K_Y+\Lambda_Y$ is the pullback of $K_X+\Lambda$, so the pair 
$(X,\Lambda)$ is lc near $x$. Also $n\Lambda$ is integral. Moreover, from 
$$ 
K_X+\Lambda \sim_\Q \frac{(n+2)l}{n}A
$$ 
we deduce that $4lA- (K_X+\Lambda)$ is ample which in turn implies 
that there is a natural number $m$ depending only on $d,r,\epsilon$ such that $mA- \Lambda$ is ample.
Finally, 
$$
a(T,X,\Lambda)=a(T,Y,\Lambda_Y)=a(T_Y,Y,\Lambda_Y)=0,
$$ 
so $T$ is an lc place of $(X,\Lambda)$.

\end{proof}

%%%%%%%%%%%%%%%%%%%

\section{\bf Proof of main results}

We apply induction on dimension to prove  Theorems \ref{t-BAB},  \ref{t-bnd-lct-global}, and \ref{t-bnd-lct}, 
so assume they all hold in dimension $\le d-1$. It is easy to verify them in dimension one.
Recall that we proved Theorem \ref{t-compl-near-lcc} in Section 4.

\begin{proof}(of Theorem \ref{t-bnd-lct})
\emph{Step 1.}
In this step we make some simple reductions.
Since $A-B$ and $A-M$ are pseudo-effective, replacing $A$ with $2A$ we can assume $A-B$ and $A-M$ are big.
In particular, $A\sim_\R M+N$ for some $N\ge 0$. Thus   
$$
\lct(X,B,|M|_\R)\ge  \lct(X,B,|M+N|_\R)=\lct(X,B,|A|_\R),
$$
so it is enough to give a positive lower bound for the right hand side.\\

\emph{Step 2.}
In this step we reduce the theorem to the case when $K_X$ is $\Q$-Cartier.
By Lemma \ref{l-bnd-small-modification}, there exist a natural number $l$ depending only on $d,r,\epsilon$ 
such that there exist a small projective birational morphism $\phi\colon Y\to X$ and 
a very ample divisor $A_Y$ on $Y$ such that 
\begin{itemize}
\item $Y$ is normal and $K_Y$ is $\Q$-Cartier,

\item $A_Y^d\le l$ and $A_Y-\phi^*A$ is ample.
\end{itemize}
Let $K_Y+B_Y=\phi^*(K_X+B)$. Then 
$$
A_Y-B_Y=(A_Y-\phi^*A)+(\phi^*A-B_Y)
$$
is big as $\phi$ is small. Moreover, 
$$
\lct(X,B,|A|_\R)=\lct(Y,B_Y,|\phi^*A|_\R)\ge \lct(Y,B_Y,|A_Y|_\R).
$$
Thus replacing $(X,B),A,r$ with $(Y,B_Y),A_Y,l$, we can assume that $K_X$ is $\Q$-Cartier.\\ 

\emph{Step 3.}
From here to the end of Step 5 we assume that $A-B$ is ample. In Step 6 we treat the general case. 
In this step we consider the lc threshold of the $\R$-linear system 
defined by $C:=\frac{1}{2}A$ and make some preparations for applying Proposition \ref{p-finding-Lambda}. We have 
$$
\lct(X,B,|A|_\R)=\frac{1}{2}\lct(X,B,|C|_\R)
$$
because if $N\in |A|_\R$, then $L:=\frac{1}{2}N\in |C|_\R$ and 
$(X,B+tN)$ is lc if and only if $(X,B+2tL)$ is lc where $t\in \R$.
Thus it is enough to find a positive lower bound for $\lct(X,B,|C|_\R)$.

Let $n,m,\epsilon'$ be the numbers given by Proposition \ref{p-finding-Lambda} for the data $d,r,\epsilon$. 
Note that since $K_X+B$ and $K_X$ are both $\R$-Cartier, $B$ is $\R$-Cartier.
Also note that $A-C$ is ample by definition of $C$. 
Pick $L\in |C|_\R$. Let $t$ be the largest real number such that $(X,B+tL)$ is ${\epsilon'}$-lc. 
It is enough to find a positive lower bound for $t$. In particular, we can assume $t<1$.

By definition of $t$, there is a prime divisor $T$ on birational models of $X$ such that 
$$
a(T,X,B+tL)={\epsilon'}.
$$
Let $x$ be the generic point of the 
centre of $T$ on $X$.\\ 

\emph{Step 4.}
In this step we reduce to the case when $x$ is a closed point.
Assume $x$ is not a closed point. Then cutting by general elements of $|A|$ and 
applying induction (see Step 1 of the proof of Proposition \ref{p-bnd-lct-toroidal-2} 
for similar arguments), there is a positive real number $v$ depending only on $d,r,\epsilon$ such that 
$(X,B+vL)$ is lc outside finitely many closed points, in particular, it is lc near $x$. Then 
$$
(X,B+(1-\frac{\epsilon'}{\epsilon})vL)
$$ 
is $\epsilon'$-lc near $x$, 
by Lemma \ref{l-average-boundary}, because 
$$
B+(1-\frac{\epsilon'}{\epsilon})vL=\frac{\epsilon'}{\epsilon}B+(1-\frac{\epsilon'}{\epsilon})(B+vL)
$$
and because $(X,B)$ is $\epsilon$-lc.
In particular, $t\ge (1-\frac{\epsilon'}{\epsilon})v$. Thus we can assume $x$ is a closed point.\\

\emph{Step 5.}
In this step we show that $t$ is bounded from below away from zero by applying 
Propositions \ref{p-bnd-lct-toroidal-2} and \ref{p-finding-Lambda}. 
First, by Proposition \ref{p-finding-Lambda} there is a $\Q$-Cartier $\Q$-divisor $\Lambda\ge 0$ such that 
\begin{itemize}
\item $n\Lambda$ is integral, 
\item $mA -\Lambda$ is ample, 
\item $(X,\Lambda)$ is lc near $x$,  and 
\item $T$ is an lc place of $(X,\Lambda)$.
\end{itemize}  
Replacing 
$A,C,L,t$ with $2mA,2mC,2mL,\frac{t}{2m}$, respectively, and replacing  $r$ accordingly, 
we can assume $A-B-tL$ and $A-\Lambda$ are ample. Applying Proposition \ref{p-bnd-lct-toroidal-2} 
to $(X,B+tL)$, there is a natural number  $q$ depending only on $d,r,n,\epsilon'$ 
such that if $\nu\colon U\to X$ is a resolution so that $T$ is a divisor on $U$, then 
$\mu_T\nu^*L\le q$. Pick such a resolution.

Now since 
$$
a(T,X,B)\ge \epsilon>\epsilon'=a(T,X,B+tL),
$$
we have  $\mu_T\nu^*tL\ge \epsilon-\epsilon'$ which implies 
$t\ge \frac{\epsilon-\epsilon'}{q}$, hence $t$ is bounded from below as required.\\

\emph{Step 6.} 
In this step we finish the proof of the theorem. 
It remains to treat the case when $A-B$ may not be ample. We will show that after replacing 
$A$ with a bounded multiple, $A-B$ becomes ample. 
As mentioned in Step 1, 
we can assume $A-B$ is big. We can then find $0\le P\sim_\R A-B$. By Steps 1-5 above, more precisely, 
by applying the theorem in the case when the boundary is zero, we find a positive real number 
$s$ depending only on $d,r,\epsilon$ such that $(X,sP)$ is klt. Thus $K_X+sP+3dA$ is ample by 
boundedness of length of extremal rays. On the other hand, we can assume that $A-K_X$ is ample, 
hence $sP+(3d+1)A$ is ample which means that letting $e=\lceil \frac{3d+1}{s}\rceil$,  
$P+eA$ is ample. 

By assumption, $P\sim_\R A-B$, so $(e+1)A-B$ is ample. Therefore, replacing $A$ with $(e+1)A$, 
we are reduced to the case when $A-B$ is ample. 

\end{proof}

The idea of Step 6 in the previous proof is due to Yanning Xu.

\begin{proof}(of Theorem \ref{t-global-lct-attained})
This follows by combining Theorem \ref{t-bnd-lct}, Lemma \ref{l-t-global-lct-attained=1}, and 
Proposition \ref{p-t-global-lct-attained<1}.

\end{proof}

\begin{proof}(of Theorem \ref{t-bnd-lct-global})
This follows from Theorem \ref{t-bnd-lct} in dimension $\le d$, Theorem \ref{t-BAB} in dimension $\le d-1$, 
and Lemma \ref{l-local-lct-bab-to-global-lct}.

\end{proof}

\begin{proof}(of Theorem \ref{t-BAB})
Let $X'$ be a small $\Q$-factorialisation of $X$. Then $X'$ is of Fano type.
Run an MMP on $-K_{X'}$ and let $X''$ be the resulting model. Then $X''$ is an $\epsilon$-lc 
weak Fano variety because we can find $\Delta\ge B$ so that $(X,\Delta)$ is $\epsilon$-lc and $K_X+\Delta\sim_\R 0$ 
which gives $\Delta''$ so that $(X'',\Delta'')$ is $\epsilon$-lc. 
It is enough to show such $X''$ form a  bounded family because then there is a bounded 
natural number $n$ such that $K_{X''}$ has a klt $n$-complement $K_{X''}+\Omega''$ 
which gives a klt $n$-complement $K_{X'}+\Omega'$ [\ref{B-compl}, 6.1(3)] and this in turn gives a  
klt $n$-complement $K_X+\Omega$ of $K_X$, hence we can apply [\ref{HX}]. Replacing $X$ 
with $X''$ we can then assume $B=0$.

By Theorems \ref{t-eff-bir-e-lc} and \ref{t-bnd-compl}, there is a natural number $m$ 
depending only on $d,\epsilon$ such that $|-mK_{X}|$ defines a birational map and such that $K_{X}$ 
has an $m$-complement. Moreover, by Theorem \ref{t-BAB} in dimension $\le d-1$ and by Theorem \ref{t-BAB-to-bnd-vol}, 
there is a natural number $v$ depending only on $d,\epsilon$ such that $\vol(-K_{X})\le v$. 

On the other hand, by Theorem \ref{t-bnd-lct-global}, there is a positive real number $t$ depending only 
on $d,\epsilon$ such that if $0\le N\sim_\R -K_X$ then $(X,tN)$ is klt. Thus letting 
$t_l=\frac{t}{l}$ for $l\in \N$, we deduce that for any $0\le L\sim -lK_X$, the pair $(X,t_lL)$ is klt. 
Now boundedness of such $X$ follows from Theorem \ref{t-from-lct-to-bnd-var}.

\end{proof}

\begin{proof}(of Corollary \ref{cor-BAB})
Since $\Delta$ is big, we can write $\Delta\sim_\R A+D$ where $A$ is ample and $D\ge 0$. Pick 
$\alpha\in(0,1)$ and let 
$$
\Gamma=(1-\alpha)\Delta +\alpha D.
$$
Then 
$$
-(K_X+\Gamma)=-(1-\alpha)(K_X+\Delta)-\alpha (K_X+D)
$$ 
is ample because 
$$
-(K_X+D)\sim_\R \Delta-D\sim_\R A
$$
is ample. Since $(X,\Delta)$ is $\epsilon$-lc, choosing $\alpha$ to be sufficiently small we can ensure 
that $(X,\Gamma)$ is $\frac{\epsilon}{2}$-lc. 
Now apply Theorem \ref{t-BAB}.

\end{proof}

\begin{proof}(of Corollary \ref{cor-bir-aut})
This follows from Theorem \ref{t-BAB} and [\ref{Prokhorov-Shramov}, Theorem 1.8].

\end{proof}

%%%%%%%%%%%%%%%%%%%%%%%%%%%%%%%%%%%%%

\vspace{2cm}

\textsc{DPMMS, Centre for Mathematical Sciences} \endgraf
\textsc{University of Cambridge,} \endgraf
\textsc{Wilberforce Road, Cambridge CB3 0WB, UK} \endgraf
\email{c.birkar@dpmms.cam.ac.uk\\}

\end{document}